\documentclass[reqno, a4paper,11pt]{amsart}

\usepackage{fullpage}

\usepackage{amssymb}

\usepackage{amsfonts,mathrsfs}
\usepackage{amsmath}
\usepackage[enableskew]{youngtab}
\usepackage{mathabx}
\usepackage{graphicx}
\usepackage[all]{xy}
%\usepackage{array}
%\xyoption{graph}

\usepackage{multirow}

%Theorems
\usepackage{amsthm}
\newtheorem{theorem}{Theorem}[section]
\newtheorem{lemma}[theorem]{Lemma}
\newtheorem{proposition}[theorem]{Proposition}
\newtheorem{corollary}[theorem]{Corollary}

\theoremstyle{definition}
\newtheorem{definition}[theorem]{Definition}

\newtheorem{remark}[theorem]{Remark}

\newtheorem{example}[theorem]{Example}

\theoremstyle{remark}
\numberwithin{theorem}{subsection}

\newcommand{\Si}{\mathfrak{S}}
\renewcommand{\hom}{\mathrm{hom}}
\newcommand{\Hom}{{\mathrm{Hom}}}

\newcommand{\End}{{\mathrm{End}}}

\renewcommand{\k}{\Bbbk}

\newcommand{\Fct}{\mathrm{Fct}}

\newcommand{\ev}{\mathrm{ev}}
\newcommand{\I}{\mathsf{I}}
\newcommand{\II}{\mathsf{II}}
\newcommand{\M}{\texttt{M}}
\newcommand{\Q}{\texttt{Q}}

\newcommand{\bs}{\boldsymbol}

\newcommand{\Sym}{\text{Sym}}

\newcommand{\C}{\mathcal{C}}
\newcommand{\D}{\mathcal{D}}

\newcommand{\U}{\mathcal{U}}
\newcommand{\W}{\mathcal{W}}
\newcommand{\Xc}{\mathcal{X}}
\newcommand{\Yc}{\mathcal{Y}}
\newcommand{\Zc}{\mathcal{Z}}
\renewcommand{\P}{\mathcal{P}}

\newcommand{\equi}{\stackrel{\sim}{\longrightarrow}}

\newcommand{\Z}{\mathbb{Z}}

\newcommand{\V}{\mathcal{V}}

\newcommand{\smod}{\mathrm{smod}}
\newcommand{\svec}{\mathrm{svec}}
\newcommand{\Smod}{\mathrm{Smod}}
\newcommand{\Svec}{\mathrm{Svec}}

\newcommand{\vvec}{\mathrm{vec}}

\newcommand{\Pol}{\mathsf{Pol}}

\newcommand{\sdim}{\mathrm{sdim}}

\newcommand{\hp}{\hspace{.02cm}}
\newcommand{\hpp}{\hspace{.05cm}}
\newcommand{\hppp}{\hspace{.2cm}}
\newcommand{\hm}{\hspace{-.02cm}}
\newcommand{\hmm}{\hspace{-.05cm}}

\title[]{On Schur Superfunctors}
%\author[J.~Axtell]{Jonathan~Axtell }
\author[]{Jonathan~Axtell }
\address{Department of Mathematics, Seoul National University, 599 Gwanak-ro, Gwanak-gu, Seoul 151-747, Korea}
\email{jdaxtell@snu.ac.kr}
\thanks{This work was supported by the BRL research fund grant \#2013055408 of the National Research Foundation of Korea}
\date{\today}
\begin{document}

%\sloppy

\maketitle
%\vspace{-.7cm}

\begin{abstract}
We introduce super-analogues of the Schur functors defined by Akin, 
Buchsbaum and Weyman. 
These {\em  Schur superfunctors} 
may be viewed as characteristic-free analogues of the  finite dimensional 
atypical irreducible modules over the Lie superalgebra 
$\mathfrak{gl}(m|n)$  studied by Berele and Regev. 
Our construction realizes Schur superfunctors as objects of 
a certain category of strict polynomial superfunctors.
We show that Schur superfunctors are indecomposable objects of this category.
Another aim is to provide a decomposition of {\em  Schur bisuperfunctors} 
in terms of tensor products of Schur superfunctors.  
\end{abstract}
\medskip

\setcounter{tocdepth}{1}
\tableofcontents

%\vspace{-.73cm}

\section*{Introduction}
Let $\k$ be an arbitrary field. 
Then it is well known that irreducible polynomial representations of the general linear 
group scheme, $GL_n$, 
correspond to partitions,  $\lambda$, of length smaller than or equal to 
$n$. Furthermore, each such irreducible may be obtained as the simple head (or cosocle) 
of a 
corresponding {\em Schur} (or {\em costandard}) {\em module}, $S_\lambda(\k^n)$, 
which is essentially an induced module %, $H^0(\lambda)$, 
from a Borel subgroup of $GL_n$, as defined in \cite{Ja}. (See also \cite{Green} %
for a definition in terms of Schur algebras.) %\vspace{.2cm}
%%\smallskip
% 
In \cite{ABW}, Akin, Buchsbaum and Weyman gave an explicit construction of these 
Schur modules which
%over a field of arbitrary characteristic.  
surprisingly uses only rudimentary multilinear algebra stated in terms of familiar Hopf algebras.
The goal of this paper is to define and study super-analogues of this construction.  
%\smallskip

More specifically, let $V$ be a finite dimensional $\k$-vector space, and consider the Hopf algebra $D^{} V$ 
(resp.~$\Lambda^{} V$, $\Sym^{} V$) of divided (resp.~exterior, symmetric) powers of $V$.  Also, given a 
partition $\lambda=(\lambda_1, \dots, \lambda_t)$, let $\lambda'$ denote its transpose.  
Then in \cite{ABW}, the Schur module $S_\lambda(V)$ is defined to be the image of a certain linear map 
$$\theta_{\lambda}(V): \Lambda^{\lambda'}V \to V^{\otimes d} \to \Sym^{\lambda}V,$$
where $\Lambda^{\lambda'}(V)$, $\Sym^{\lambda}(V)$ are tensor products of exterior and symmetric 
powers, respectively.  Dually, the Weyl (or co-Schur) module $W_\lambda(V)$ is defined as the
image of a composition
$$\theta'_\lambda(V):  D^{\lambda'}V \to V^{\otimes d} \to  \Lambda^{\lambda}V.$$
In both cases, the linear maps are obtained by composing a permutation of the multiplication with 
iterations of the comultiplication in the respective Hopf algebras.
%\smallskip
%

Now let $\P_d= \P_{d,\k}$ denote the category of (homogeneous) strict polynomial functors defined by Friedlander and Suslin 
in \cite{FS}.  Let $\vvec$ denote the category of finite dimensional $\k$-vector spaces, and let $\mathfrak{sch}$ denote the
category of all schemes over $\k$.   Then objects of $\P_d$ may be considered as (homogeneous) 
{\em $\mathfrak{sch}$-enriched}
endofunctors of $\vvec$.  In particular, the action on hom-spaces may be 
described in terms of polynomial equations which 
are homogeneous of degree $d$.  We recall in Section \ref{ss:poly} the equivalent but somewhat
simpler ``linearized" definition of the category $\P_d$.
Typical examples of strict polynomial functors are $D^d$, $\Lambda^d$, and $\Sym^d$ which
send a vector space to its $d$-th divided, exterior, or symmetric power, respectively.  Since there is a
tensor product bifunctor, $-\otimes-: \P_{d_1}\times \P_{d_2} \to \P_{d_1+d_2}$, we may also consider the
objects $D^{\lambda},\Lambda^{\lambda},\Sym^\lambda \in \P_d$, where $\lambda$ is a partition such
that $|\lambda|:= \sum\lambda_i = d$.  Evaluation of $T\in \P_d$ at $V\in \vvec$ gives rise to a polynomial $GL(V)$-module,
$T(V)$.  Furthermore, if $n\geq d$ then $\P_d$ is equivalent to the category of all finite dimensional polynomial
$GL_n$-modules which are homogeneous of degree $d$.
%\smallskip
%

The naturality of the construction in \cite{ABW} allows a straightforward generalization to the functorial setting.
I.e., the linear maps $\theta_\lambda(V)$ and $\theta\hp'_\lambda(V)$ yield natural transformations which are composites
of morphisms
$$ \Lambda^{\lambda'} \hookrightarrow I^{\otimes d} \twoheadrightarrow \Sym^{\lambda}
\quad \text{ and }\quad
D^{\lambda'}\hookrightarrow I^{\otimes d} \twoheadrightarrow \Lambda^{\lambda}$$
in the category $\P_d$, respectively, where $I^{\otimes d}$ is the $d$-th tensor product functor.  Since $\P_d$ 
is abelian, one may define the {\em Schur functor}, $S_\lambda$, to be 
the image of $\theta_\lambda$, and the {\em Weyl functor}, $W_\lambda$, is the image of $\theta'_\lambda$ in $\P_d$.
%\smallskip
%

%The aim of this paper is to give a super-analogue of the above construction.  
In \cite{Axtell}, the author defined categories $\Pol^\I_d$, $\Pol^\II_d$ whose
objects are {\em polynomial superfunctors} of types $\I$ and $\II$.  
If $m,n\ge d$, then $\Pol^\I_d$, $\Pol^\II_d$ are equivalent to the category 
of finite dimensional (degree $d$ homogeneous) polynomial  representations over the algebraic 
supergroups $GL(m|n)$,  $Q(n)$, respectively (see  \cite{BrKu}, \cite{BrKl2} for the definitions of
these supergroups).  In this paper, we will mostly be concerned with the category
$\Pol^\I_d$ of type $\I$ polynomial superfunctors.  Some results, however, can be transferred
to the category $\Pol^\II_d$ via a certain restriction functor, $\Pol^\I_d\to \Pol^\II_d$, described 
in Section \ref{ss:superpoly} below.
 %\smallskip
 %

Typical examples of objects in $\Pol^\I_d$ are given by $I^{\otimes d}, \Gamma^d, S^d$ 
which are super analogues of the tensor, divided, and symmetric powers considered above.  
Since tensor products of polynomial superfunctors are well-defined, we may also consider
$\Gamma^\lambda, S^\lambda \in \Pol^\I_d$, for any partition $\lambda$ such that $|\lambda| =d$.
We will also need to consider the (right) parity change functor $\Pi \in \Pol^\I_1$.  Given any
$T\in \Pol^\I_d$, we may then consider both left and right compositions, 
$\Pi\circ T$ and $T\circ \Pi$, as objects in $\Pol^\I_d$.
%\smallskip
%

Now suppose $\lambda$ is a partition with $|\lambda|=d$.
We define the Schur superfunctor $\widehat{S}_\lambda$ in Section \ref{ss:superSchur} as the
object of $\Pol^\I_d$ which is the image of a certain natural transformation 
$\hat{\theta}_{\lambda}$. This morphism $\hat{\theta}_{\lambda}$ is given as a composite of natural 
transformations of the form
$$ \Pi \circ \Gamma^{\lambda'} \circ \Pi \hookrightarrow\ I^{\otimes d} \twoheadrightarrow\ S^{\lambda} \qquad \quad
(\Gamma^{\lambda'} \circ \Pi \hookrightarrow\ I^{\otimes d} \twoheadrightarrow\  S^{\lambda}),$$
if $d$ is odd (even).  
Similarly, we define the Weyl superfunctor $\widehat{W}_\lambda$ to be the image of a 
natural transformation $\check{\theta}_{\lambda}$ given by a composite of morphisms of the form
$$  \Gamma^{\lambda'} \hookrightarrow\ I^{\otimes d} \twoheadrightarrow\ \Pi \circ  S^{\lambda}\circ \Pi \qquad\quad 
(\Gamma^{\lambda'}  \hookrightarrow\ I^{\otimes d} \twoheadrightarrow\  S^{\lambda}\circ \Pi),$$
if $d$ is odd (even).  
More generally, we will define skew Schur and Weyl superfunctors, $\widehat{S}_{\lambda/\mu}$ 
and $\widehat{W}_{\lambda/\mu}$,  associated to any skew partition, $\lambda/\mu$. 
%\smallskip 
%

In \cite{ABW}, the authors also studied {\em Schur complexes}, which they used 
to obtain resolutions of certain polynomial $GL_n$-modules.  %In Section \ref{Sec:SchurFunct},
We consider these Schur complexes as {\em strict polynomial bifunctors} $SC_{\lambda/\mu}(\ ,\ )$
associated to skew partitions $\lambda/\mu$ (cf.   Definition \ref{def:SchurFunct}).  
Evaluating $SC_{\lambda/\mu}$ at a pair of vector spaces $V,W$ gives 
(the underlying vector space of) a corresponding Schur complex.  We will show
in Proposition \ref{prop:SchurComplex} that 
 $SC_{\lambda/\mu}(V,W)$ is also isomorphic to (the underlying ordinary vector space of)
the evaluation of the Schur superfunctor $\widehat{S}_{\lambda/\mu}$ at a certain vector superspace.

In this  paper, we also give analogues of several results obtained in 
\cite{ABW} for Schur functors and Schur complexes.  
%
%In Section \ref{Sec:superSchur}, 
For example, we provide a Standard Basis Theorem (Theorem \ref{thm:standard}) 
for Schur superfunctors using a certain ``Straightening Law" (Lemma \ref{lem:straighten}).  
We also consider categories $\mathsf{biPol}^\I_d$, $\mathsf{biPol}^\II_d$ of %(type $\I$) 
{\em strict polynomial bisuperfunctors}  in Section \ref{Sec:bisuper}.
After defining the {\em Schur bisuperfunctors}, 
$\widehat{S}^\mathsf{bi}_{\lambda/\mu}\in \mathsf{biPol}^\I_d$, for any skew 
partition $\lambda/\mu$ such that $|\lambda|-|\mu| = d$,  we prove a filtration
of these superfunctors in Theorem  \ref{thm:filtration}.
This result is a super-analogue of Theorem II.4.11 in \cite{ABW}, 
which was recently used to obtain a
categorification of Fock space representations in \cite{HTY}.

We conclude the paper by showing that
%in Section \ref{Sec:highest} we show that 
the Schur superfunctors $\widehat{S}_\lambda$ are indecomposable
objects in the category $\Pol^\I_d$.  In order to prove this, we
first show in Theorem \ref{thm:indecomp} that the Schur supermodule 
$\widehat{S}_\lambda(M)$, given by
evaluation at a superspace $M$, is an indecomposable module 
for the algebraic supergroup 
$GL(M)$ (or rather of the corresponding Schur superalgebra).  
On the other hand, 
the Schur superfunctor, $\widehat{S}^{\hp \II}_{\lambda}$, is 
usually not an indecomposable object of the category $\Pol^\II_d$.
It is thus an interesting problem to define an analogue
of Schur superfunctors of type $\II$ whose corresponding Schur supermodules
are indecomposable $Q(n)$-supermodules.  Such a construction seems to be lacking
even in characteristic zero.
%We also describing the formal characters of Schur supermodules
%and Schur superfunctors in terms of {\em hook Schur functions}, $hs_\lambda(x;y)$
%(cf. \cite{BR}) and Hall-Littlewood symmetric functions $S_\lambda(x)$ (cf. \cite{Mac}) 
%for types $\I$ and $\II$, respectvely.

In \cite{LZ}, La Scala and Zubkov also constructed {\em costandard modules}, 
$\nabla(\lambda)$, for the supergroup $GL(M)$. 
Provided that the size of $\lambda$ is small enough, these supermodules 
appear to be isomorphic to our Schur supermodules, 
$\widehat{S}_\lambda(M)$. %evaluated at the vector superspace $\k^{m|n}$.  
In particular, the standard basis of $\nabla(\lambda)$ given in 
\cite[Theorem 6.6]{LZ} is in bijection with the standard basis given in Theorem 
\ref{thm:standard} below.
%

%\addtocontents{toc}{\protect\setcounter{tocdepth}{0}} 
\subsection*{Acknowledgments}  
The author wishes to thank Seok-Jin Kang and Myungho Kim  
for many helpful discussions and  Jerzy Weyman for helpful correspondence.
\smallskip
%

%\addtocontents{toc}{\protect\setcounter{tocdepth}{2}} 

%Sec
\section{Superalgebra Preliminaries}
We will assume throughout that $\k$ is an arbitrary field of characteristic not equal 2.  
Given $V,W$ a pair of $\k$-vector spaces, the set of all linear maps from $V$ to $W$ 
is denoted $\Hom(V,W)$.

%ss
\subsection{Categories enriched in $\k$-superspaces}\label{ss:svec}
A  {\em vector superspace} is a $\k$-vector space $M$ with $\Z_2$-grading, 
$M= M_0\oplus M_1$.  If $\dim (M_0) =m$ and $\dim (M_1)=n$, we
write $\text{sdim}(M) = m|n$. 
An element of $M_0$ (resp.~$M_1$) is called 
{\em even} (resp.~{\em odd}).  We write $\underline{M}$ to denote the
underlying ordinary vector space without a $\Z_2$-grading. 
A {\em subsuperspace} of 
$M$ is a subspace $N$ of $M$ such that $N = (N\cap M_{0})\oplus 
(N\cap M_{1})$.
If $M$ and $N$ are superspaces, we 
consider $\Hom(M,N)$ as a superspace by setting $\Hom(M,N)_\varepsilon$
equal to the set of linear maps $f:M\to N$ which are $\Z_2$-homogeneous 
of degree $\varepsilon$, i.e. $f(M_\delta) \subset N_{\varepsilon+\delta}$ 
$(\varepsilon,\delta \in \Z_2)$.
The $\k$-linear dual $M^\asterisk = \Hom(M,\k)$ is a superspace by viewing 
$\k$ as vector superspace concentrated in degree 0.

The tensor product $M\otimes N$ of superspaces is also a superspace with 
$(M\otimes N)_0 = M_0\otimes N_0 \oplus M_1\otimes N_1$ and
$(M\otimes N)_1 = M_0\otimes N_1 \oplus M_1\otimes N_0.$
%The tensor power $T^nM$ is also a superspace in an analogous way.
There is an even linear isomorphism 
$\tau_{_{M,N}}: M\otimes N \to N\otimes M$ of superspaces, given by 
\begin{equation*}
v\otimes w \mapsto (-1)^{|v| |w|} w\otimes v \quad (v\in M, w\in N),
\end{equation*} 
called the {\em supertwist} map.
%\smallskip

A {\em superalgebra} is an associative algebra $A$ which is also a superspace
such that $A_\varepsilon A_\delta \subset A_{\varepsilon+\delta}$. % $(i,j\in \Z_2)$.  
As above, $\underline{A}$ denotes the ordinary associative algebra 
obtained by forgetting the $\Z_2$-grading.  
Given two superalgebras $A$ and $B$, 
a {\em superalgebra homomorphism} 
$\varphi: A \to B$ is an even linear map which is an algebra homomorphism; 
its kernel is a {\em superideal}, i.e., an ordinary two-sided ideal 
which is also a subsuperspace. 
We view the tensor product of superspaces 
$A\otimes B$ as a superalgebra with multiplication defined by the usual {\em rule of signs}
convention:
\begin{equation}\label{eq:signs}
(a\otimes b)(a' \otimes b') = (-1)^{|b| |a'|} (a a') \otimes (b b') \qquad (a,a' \in A,\ b,b' \in B).
\end{equation}
The supertwist map gives an isomorphism
$\tau_{_{A,B}}: A\otimes B \cong B \otimes A$ of 
superalgebras.

\begin{example}\label{ex:cliff}
The {\em Clifford superalgebra}, $\C_n$, is the superalgebra with odd generators
$c_1, \dots, c_n$ satisfying: $c_i^2=1$ and $c_ic_j=-c_jc_i$, for $1\leq i\neq j \leq n$.
For any $n\geq 1$, there is an isomorphism 
$\C_n \cong \C_1\otimes \cdots \otimes \C_1$ ($n$ copies).
\end{example}
%\smallskip

%
Let $A$ be a superalgebra.  
A {\em right $A$-supermodule} is a superspace $V$ 
which is a right $A$-module in the usual sense, such that 
$A_\varepsilon V_\delta \subset V_{\varepsilon+\delta}$ 
for $\varepsilon,\delta \in \Z_2$.   One may similarly define 
{\em left $A$-supermodules}.  
  A {\em homomorphism} $\varphi: V \rightarrow W$ of  right (resp.~left) 
  $A$-supermodules $V$ and $W$ is a %(not necessarily homogeneous)
  linear map $\varphi$ such that 
\[
 \varphi(v a) = \varphi(v) a \qquad (\mbox{resp.~} \varphi(av) =(-1)^{|\varphi| |a|} a \varphi(v)), 
\qquad ^\forall a \in A, v \in V.
\]

Let $\Smod\hpp A$ (resp.~$A\hpp \Smod$)  denote the category 
of all right (resp.~left) $A$-supermodules
and $A$-supermodule homomorphisms.
Let $\smod\hpp A$ (resp.~$A\hpp \smod$)  denote the full subcategory of all right 
(resp.~left) $A$-supermodules which are finite dimensional (over $\k$).
%and $A$-supermodule homomorphisms.
Similarly, write $\Svec$ (resp.~$\svec$) for the category of all 
(resp.~finite dimensional) $\k$-vector superspaces.
If $V,W \in A\hpp \smod$ (resp. $\smod\hpp A$), we let $\Hom_{A}(V,W)$ 
denote the set of $A$-homomorphisms from $V$ to $W$.  Also let $\End_A(V)$ 
denote the superalgebra of all $A$-supermodule endomorphisms of $V$.  
%Given a finite dimensional superalgebra $A$ and some $V \in A\hpp \smod$ 
%(resp. $\smod\hpp A$), we have a natural isomorphism
%\begin{equation*}\label{eq:superalg}
%{\rm Hom}_A(A, V) \simeq V
%\end{equation*}
%of vector superspaces.

Given any superspace $M$ we denote by $\Pi M$ the same vector space, but with opposite 
$\Z_2$-grading.  For example, we write $\k^{m|n} = \k^m \oplus (\Pi \k)^n$.  
%There is a {\em left parity change} functor $\Pi_l= \Pi^A_l: {}_A \smod\to {}_A \smod$, such that $\Pi_l(V) = \Pi V$, for any $V\in {}_A\smod$.  
%Suppose $V$ is a left $A$-supermodule.  Then define a new supermodule $\Pi_l V$ which 
%is the same vector space as $V$ but with opposite $\Z_2$-grading.  
%The new left action of $a\in A$ on $v\in \Pi V$ is defined in terms of the old one by 
%$a\cdot v := (-1)^{|a|}av$.  On a morphism $f$, $\Pi_l(f)$ is the same underlying linear 
%map as $f$.  
%
Given a superalgebra $A$, we   define the 
{\em (right) parity change} functor 
$\Pi : \Smod\hpp A\to \Smod\hpp A$ which sends $V\mapsto \Pi V$, 
%has the opposite $\Z_2$-grading as $V$ but the same 
such that $\Pi V$ has the same right action: $v\cdot a = va$ ($a\in A, v\in V$). On a right
$A$-supermodule homomorphism $\varphi:V\to W$, 
we set $\Pi(\varphi)= (-1)^{|\varphi|}\varphi$ as a linear map.

\smallskip

We say a category $\V$ is an {\em $\Svec$-enriched category} 
if the hom-sets $$\hom_\V(V,W)\qquad  (V,W\in \V)$$ are %finite dimensional 
$\k$-superspaces, while composition is bilinear and 
{\em even}.  I.e., if $U,V,W \in \V$, then composition induces an even 
linear map:
$${\rm hom}_{\V}(V,W) \otimes {\rm hom}_{\V}(U,V) \rightarrow {\rm hom}_{\V}(U,W).$$
As usual if  $V,W$ are isomorphic in $\V$, we write $V \cong W$.  If there 
is an even isomorphism $\varphi:V \cong W$ (i.e., 
$\varphi\in {\rm hom}_\V(V,W)_0$), then we use the notation 
$$V \simeq W.$$
Let $\V_\ev$ denote the subcategory of $\V$ consisting of the same 
objects but only even homomorphisms.

For a superalgebra $A$, the category $A\hpp \Smod$ ($\Smod\hpp A$) 
is naturally an $\Svec$-enriched category, as is the full subcategory
$A\hpp \smod$  ($\smod\hpp A$).  
Further, the even subcategories 
$(_A\Smod)_\ev$, etc.{\hpp\hpp }are abelian categories in the usual sense.  
%(resp.~$(\Smod_A)_\ev$ and $(\smod\hpp A)_\ev$) 
All functors between the $\Svec$-enriched categories which we consider 
will send even morphisms to even morphisms.  They 
will thus give rise to corresponding functors between the underlying even 
subcategories.

\begin{definition}
Suppose $\V,\V'$ are $\Svec$-enriched categories.  An {\em even} functor 
$F:\V\to \V'$  is called {\em $\k$-linear} (or {\em $\Svec$-enriched}) if 
the function
$$F_{V,W}: \hom_\V(V,W) \to \hom_{\V'}(FV,FW)$$
 is a linear map for any
pair of objects $V,W \in \V$.  
Suppose that $F,G: \V \to \V'$ are a pair of {\em $\k$-linear functors}.  Then
an {\em even} (resp.~{\em odd}) {\em $\Svec$-enriched natural transformation} 
$\eta: F \to G$ consists of a collection of even (resp.~odd) linear maps 
 $$\eta(V) \in \hom_{\V'}(FV,GV) \qquad (^\forall V\in \V)$$
such that for a given $\varphi \in \hom_\V(V,W)$ we have 
 $$G(\varphi) \circ \eta(V) = (-1)^{|\varphi| |\eta(V)|} \eta(W) \circ F(\varphi).$$
In general, an {\em $\Svec$-enriched natural transformation}, $\eta:F\to G$, is 
defined to be a collection of linear maps, $\eta(V)= \eta_0(V) \oplus \eta_1(V)$, 
for all $V\in \V$, such that 
$$ \eta_\varepsilon(V) \in \hom_{\V'}(FV,GV)_\varepsilon \qquad (\varepsilon \in \Z_2)$$
and $\eta_0$ (resp.~$\eta_1$)$: F\to G$ is an even (resp.~odd)  $\Svec$-enriched
natural transformation.
\end{definition}

\begin{definition}[Cartesian product of $\Svec$-enriched categories]
Given two $\Svec$-enriched categories $\V$, $\W$ we may form the direct product
$\V\times \W$, which is the $\Svec$-enriched category with pairs $(V,W)$ 
$(V\in \V, W\in\W)$ as objects and morphisms 
$$\hom_{\V\times \W}\big((V,W), (V',W')\big)\ =\ \hom_\V(V,V') \otimes \hom_\W(W,W').$$
The composition of morphisms follows the rule of signs convention (\ref{eq:signs}) stated above.
\end{definition}
%\smallskip

In the above definition, notice that we have chosen the hom-spaces to be tensor products
instead of Cartesian products.  In this way, we may consider only even, $\k$-linear functors: 
$\V\times \W \to \U$, instead of bilinear functors. 
% 

%ss
\subsection{Hopf superalgebras}\label{ss:Hopf}
A {\em cosuperalgebra} is a superspace $C$ with the structure of $\k$-coalgebra 
such that the comultiplication $\Delta:C\otimes C \to C$ and the counit 
$\epsilon: C\to \k$ are even linear maps.  If $C$ and $D$ are cosuperalgebras, 
the tensor product $C\otimes D$ is a cosuperalgebra with comultiplication
$$ \Delta_{C\otimes D}=\ 1\otimes \tau_{_{C,D}} \otimes 1 \circ (\Delta_C\otimes \Delta_D).$$
A homomorphism $\varphi:C\to D$ of cosuperalgebras is an even linear map
which is a homomorphism of coalgebras in the usual sense.

By a {\em bisuperalgebra}, we mean a superalgebra which is also a 
cosuperalgebra such that the comultiplication and counit are superalgebra
homomorphisms.
E.g., if $A$ is a bisuperalgebra, then the following diagrams are commutative:
\begin{equation}\label{eq:diagram1}
\xymatrix{
A \otimes A\  \ar[d]_{m_{_A}} \ar[rr]^-{\Delta\otimes \Delta}  & &\ A \otimes A \otimes A \otimes A \ar[d]^-{m_{\hmm_{A\otimes A}}}\\
A\ \ar[rr]_-{\Delta} & &\ A\otimes A,}
\qquad
\xymatrix{
A \otimes A  \ar[dr]_{\epsilon\otimes\epsilon} \ar[r]^-{m_A}   &A \ar[d]^-{\epsilon}\\
 & \k.}
\end{equation}
(Recall that $m_{\hmm_{A\otimes A}}$ is defined to be $m_{\hmm_A}\otimes m_{\hmm_A} \circ (1\otimes \tau_{\hmm_{A, A}} \otimes 1)$.)
A morphism $\varphi:A \to B$ of bisuperalgebras is a superalgebra map
which is also a map of cosuperalgebras.
\smallskip

A {\em Hopf superalgebra} is a bisuperalgebra $A$ which is also a Hopf algebra, 
such that the {\em antipode}, $\iota: A\to A$, is an even linear map. 

\begin{definition}
We defined a {\em $\Z_{\geq 0}$-graded superspace} to be a superspace 
$M = \bigoplus_{d=0}^\infty M^d$ such that each $M^d$ is a finite dimensional 
subsuperspace.
A $\Z_{\geq 0}$-graded superspace $M$ is {\em connected} if $M^0= \k$.
\end{definition}

Note that the tensor product of $\Z_{\geq 0}$-graded superspaces, $M\otimes N$, is
naturally $\Z_{\geq 0}$-graded, with $(M\otimes N)^d = \bigoplus_{i+j=d}M^i\otimes N^j$. 

\begin{definition}
Suppose $A$ is a (co)superalgebra whose underlying superspace is $\Z_{\geq 0}$-graded. 
Then $A$ is a {\em graded (co)superalgebra} if the multiplication $m:A\otimes A\to A$
(resp.~comultiplication $\Delta: A\to A\otimes A$) is a $\Z_{\geq 0}$-homogeneous map.
A {\em graded bisuperalgebra} is a bisuperalgebra which is graded
both as a superalgebra and as a cosuperalgebra simultaneously.

Define the {\em graded dual}, $A^{\mathsf{gr},*}$, of a graded bisuperalgebra,
$A = \bigoplus_{d=0}^\infty A^d$, to be the bisuperalgebra  
$$A^{\mathsf{gr},*}= \bigoplus_{d=0}^\infty (A^d)^*,$$
with multiplication $m^\mathsf{gr}$ and comultiplication $\Delta^\mathsf{gr}$ defined to be
the maps which are dual to compultiplication $\Delta$ and multiplication $m$, respectively. 
\end{definition}

\begin{remark} \label{rmk:Takeuchi}
By Takeuchi's antipode formula (cf. \cite[Lem.~14]{Takeuchi} or, e.g., \cite[Prop.~3.8.8]{HGK}), 
any connected $(\Z_{\geq 0})$-graded bialgebra is automatically a Hopf algebra.
Using this formula, it may be checked that if $A$ is a connected bisuperalgebra, then both $A$ and 
$A^{\mathsf{gr},*}$ are also Hopf superalgebras.
\end{remark}

%ss
\subsection{The symmetric superalgebra}
Let $M$ be a vector superspace.  For $d\in \Z_{\geq 1}$, we write 
$M^{\otimes d}= M\otimes \cdots \otimes M$
($d$ copies) and $M^{\otimes 0}=\k$. The {\em tensor superalgebra}, $\text T^{\udot}\hm M$, is 
the tensor algebra $$\text T^{\udot}\hm M = \bigoplus_{d\ge 0} M^{\otimes d}$$ regarded 
as a vector superspace.  It is the free associative %(graded) 
superalgebra generated by $M$.
Hence, there is a unique superalgebra map 
$\Delta:\text T^{\udot}\hm M \to \text T^{\udot}\hm M \otimes \text T^{\udot}\hm M$ such that: $v \mapsto v\otimes 1 + 1 \otimes v.$
Also, let $\epsilon:\text T^{\udot}\hm M \to \k$ be such that $\epsilon(1)=1$ and
$\epsilon(x) = 0$, if $x\in M^{\otimes d}$ with $d>0$.
Then these maps makes $\text T^{\udot}\hm M$ into a bisuperalgebra.
The fact that $\text T^{\udot}\hm M$ is a Hopf superalgebra follows by
Remark \ref{rmk:Takeuchi}.
\smallskip

A superalgebra $A$ is called {\em commutative} if 
$ab = (-1)^{|a| |b|} ba$ for all $a,b \in A$.
The {\em symmetric superalgebra}, 
$S^{\udot}\hmm M$, is the free commutative superalgebra generated by $M$.
Explicitly, $S^{\udot}\hmm M = \bigoplus_{d \geq 0} S^d M$
is the quotient of $\text T^{\udot}\hm M$ by the super ideal 
$$\mathcal{I}= \langle x\otimes y - (-1)^{|x| |y|}y\otimes x ;\ x,y \in M \rangle.$$ 
Since the tensor product of commutative superalgebras is also
commutative, there is a unique homomorphism
$\Delta: S^{\udot}\hmm M \to S^{\udot}\hmm M\hp \otimes\hp S^{\udot}\hmm M$ of superalgebras such that: 
$v \mapsto v\otimes 1 + 1 \otimes v.$  Define the counit 
$\epsilon: S^{\udot}\hmm M \to \k$ in the same way as above.  Together these maps give
$S^{\udot}\hmm M$ the structure of Hopf superalgebra (again using Remark \ref{rmk:Takeuchi}).

Notice that $S^\udot (\ )$ satisfies the {\em exponential property}, since there is an
isomorphism:
$$
S^\udot(M)\otimes S^\udot(N) \cong S^\udot(M\oplus N),$$
which is given by: $x\otimes y \mapsto xy$, for 
any $x\in S^\udot M, y\in S^\udot N$.
In particular, we have  
${S^{\udot}\hmm (M)} \cong\ S^\udot (M_0) \otimes S^{\udot}(M_1)$.  
By forgetting the $\Z_2$-grading, we then have an isomorphism
\begin{equation}\label{eq:alg1}
 \underline{S^{\udot}\hmm (M)}\ \cong\  \Sym^{}\hp\hp \underline{M_0} \otimes \Lambda^{}\hp\hp \underline{M_1}
\end{equation} 
of ordinary (graded) associative algebras, where $\Sym^{} (\ )$ and $\Lambda^{} (\ )$ denote 
the symmetric and exterior algebras of a vector space, respectively.

Suppose now that $M\in \svec$.  Let $\Xc= (X_1, \dots, X_m)$ be a basis of $M_0$ and
$\Yc=(Y_1, \dots, Y_n)$ a basis of $M_1$.  
If we write the multiplication in $S^\udot\hmm M$ simply by juxtaposition, then
it follows from ({\ref{eq:alg1}) that the set of elements
\begin{equation} \label{eq:sym}
\{X_1^{d_1} \cdots X_m^{d_m}\hpp Y_1^{e_1} \cdots Y_n^{e_n}\hpp ;\hpp 
 d_1, \dots, d_m \in \Z_{\geq 0},\hpp e_1, \dots e_n\in \{0,1\}, \text{ and  
 $\sum d_i + \sum e_j = d$} \}
\end{equation}
forms a basis of $S^d M$.

\smallskip

%ss
\subsection{The divided powers superalgebra}\label{ss:divided}
Let $M\in \svec$ and suppose $d\in \Z_{\ge 0}$.  Then we define the
{\em $d$-th divided power} of $M$ to be the superspace: $ \Gamma^d M = S^d(M^\ast)^\ast.$
\smallskip

Suppose as above that $\Xc=(X_1, \dots, X_m)$ and $\Yc=(Y_1, \dots, Y_n)$ are bases
of $M_0$ and $M_1$, which are ordered by indices.  Fix some total order on their union 
$\Zc = \Xc \sqcup \Yc$ which preserves the relative orders in $\Xc$ and $\Yc$ respectively.
We write the elements of $\Zc$ as $Z_1, \dots, Z_{m+n}$ with the total order again indicated
by indices.  
Now $\Zc$ is a $\Z_2$-graded set, with $\Zc_0 = \Xc$ and $\Zc_1 = \Yc$.  
We write $|Z_i| = \varepsilon$ if $Z_i\in \Zc_\varepsilon$, for $i= 1, \dots, m+n$ and $\varepsilon = 0,1$.
\smallskip

Let  $\check{\Zc}= (\check{Z}_1, \dots, \check{Z}_{m+n})$ denote the
 basis of $M^\ast$ which is dual to $\Zc$.  Suppose $\alpha \in (\Z_{\geq 0})^{m+n}$
 is such that $|\alpha| = \sum \alpha_i =d$.
Then we denote by
\begin{equation*}%\label{eq:alpha}
Z^{(\alpha)}\hp =\hp \hpp Z_1^{(\alpha_1)} \dots Z_{m+n}^{(\alpha_{m+n})} 
\end{equation*}
the element of $\Gamma^d M$ which is dual to the monomial 
$\check{Z}_1^{\alpha_1}  \dots \check{Z}_{m+n}^{\alpha_{m+n}}$ of  $S^d(M^\ast)$.
Since $S^\udot(M^\ast)$ is commutative and (\ref{eq:sym}) is a basis of $S^d(M^\ast)$, it follows that 
$$\{Z^{(\alpha)} : |\alpha| =d \text{ and } 0\leq \alpha_i\leq 1 \text{ if }  |Z_i| = 1\}$$ 
gives a basis of $\Gamma^d M$.

Let us  consider an alternative way to denote this basis for $\Gamma^dM$.
We write $I(m|n,d)$ to denote the set of all functions 
$\bs i: \{1,\dots, d\} \to \{1,\dots, m+n\}$.  
We may identify any 
$\bs i\in I(m|n,d)$ as a sequence $\bs i =(i_1, \dots, i_d)$ of
elements $i_k = \bs i(k) \in \{1, \dots, m+n\}$. 
For each  $\bs i\in I(m|n,d)$ there is
a corresponding element 
$Z^{(\bs i)} \in \Gamma^dM$ which is dual to the element
$$\check{Z}_{i_1} \check{Z}_{i_2} \cdots \check{Z}_{i_d} \in S^d M.$$
If $\bs i\in I(m|n,d)$, we let $wt(\bs i) \in (\Z_{\geq 0})^{\hp m+n}$ denote the sequence
obtained by setting $$wt(\bs i)_s = \#\{1\leq k\leq d\hp : \hpp i_k=s\},$$
for $s=1, \dots, m+n$.

We say that $\bs i \in I(m|n,d)$ is {\em $\Zc$-restricted} if: 
$wt(\bs i)_s\leq 1 $ whenever $|Z_s| = 1$.
Note that the symmetric group $\Si_d$ acts on $I(m|n,d)$ by composition,
i.e.~$(\bs i.\sigma)(k) = \bs i(\sigma k)$ for any $\sigma\in\Si_d$.
We define the {\em standardization} of $\bs i$ to be the
sequence $\rm{st}(\bs i) = \rm{st}_\Zc(\bs i)$  obtained by rearranging the entries of $\bs i$ into
non-decreasing order.  I.e., $\rm{st}(\bs i) = \bs i.\sigma$ for 
some $\sigma \in \Si_d$ such that
$\bs i_{\sigma 1} \leq \cdots \leq \bs i_{\sigma d}.$
We say that $\bs i$ is {\em standardized} if $\bs i = \rm{st}(\bs i)$. 
If $\bs i$ is standardized, then  $Z^{(\bs i)} = Z^{(wt(\bs i))}$, and  $Z^{(\bs i)} \neq 0$ if and only if
$\bs i \in I(m|n,d)$ is $\Zc$-restricted.
It follows that 
$$\{Z^{(\bs i)} : \bs i \in I(m|n,d) \text{ is } \Zc\text{-standardized and } \Zc\text{-restricted} \}$$
gives another way to denote our basis for $\Gamma^dM$.

For any $\bs i = (i_1,\dots, i_d) \in I(m|n,d)$, let us write $|\bs i|_\Zc$ to denote the  
sequence in $(\Z_2)^{m+n}$ given by: 
$|\bs i|_\Zc = (|Z_{i_1}|, \dots, |Z_{i_{m+n}}|)$.
Given $\varepsilon \in (\Z_2)^d$ and $\sigma \in \Si_d$, we let
$$
 \text{sgn}( \varepsilon, \sigma) = \prod_{\substack{1\leq s<t \leq d\\ \sigma^{-1}s>\sigma^{-1}t}} (-1)^{\varepsilon_s \varepsilon_t}$$
We then set  
$\text{sgn}_{\Zc}(\bs i; \sigma) =  \text{sgn}( |\bs i|_\Zc, \sigma)$.  
If $\bs i = \bs j.\sigma$, then we also write 
$\text{sgn}_{\Zc}(\bs i; \bs j) = \text{sgn}_{\Zc}(\bs i; \sigma)$.

There is also a unique right action of the symmetric group $\Si_d$ 
on the tensor power $M^{\otimes d}$ of a superspace 
such that each transposition $(i\ i+1)$ for $1\le i \le d-1$ acts by: 
\[
(v_1 \otimes \cdots \otimes v_d). (i\ i+1)\hpp =\hpp\hpp (-1)^{|v_i| |v_{i+1}|}\hpp v_1\otimes \cdots \otimes v_{i+1} \otimes v_i \otimes \cdots \otimes v_d,
\]
for any $v_1, \dots, v_d \in M$ with $v_i, v_{i+1}$ being $\Z_2$-homogeneous.  
We denote by $Z^{\bs i}$ the element of $M^{\otimes d}$ given by 
$Z^{\bs i}\hp =\hpp Z_{i_1} \otimes \dots \otimes Z_{i_d}$. Then $M^{\otimes d}$
has as a basis  
$\{Z^{\bs i}\hp| \hpp \bs i\in I(m|n,d)\}$, and the above action 
may be given explicitly by 
\begin{equation}\label{eq:sgn}
Z^{\hp \bs i}.\sigma =  \text{sgn}_\Zc(\bs i; \sigma) Z^{\hp \bs i.\sigma},
\end{equation}
for any $\sigma \in \Si_d$.
\smallskip

Notice that the symmetric power is the coinvariant superspace, 
$S^d M = (M^{\otimes d})_{\Si_d}$, with respect to the above action.
This means that there is a canonical even isomorphism 
\begin{equation}\label{eq:coinv}
 \Hom_{\k \Si_d}(M^{\otimes d}, N) \simeq \Hom_{\k \Si_d}(S^d M, N)
 \end{equation}
for any $N\in \svec$ considered as a trivial $\Si_d$-module.
Now there is also a right action of $\Si_d$ on the dual space $(M^{\otimes d})^\asterisk$ 
given by $(f.\sigma)(v) = f(v.\sigma^{-1})$, for $f\in (M^{\otimes d})^\ast, v\in M^{\otimes d}$ 
and $\sigma \in \Si_d$.  Further $(M^\ast)^{\otimes d} \simeq (M^{\otimes d})^\ast$
as $\Si_d$ module. It then follows from the definition of $\Gamma^dM$ and (\ref{eq:coinv}) 
that we have a canonical even isomorphism
$$ \Hom_{\k \Si_d}(N, M^{\otimes d}) \simeq \Hom_{\k \Si_d}(N, \Gamma^d M)$$
for any $M\in \svec$.
Hence the $d$-th divided power is isomorphic to the subsuperspace of invariants: 
$\Gamma^d M \simeq (M^{\otimes d})^{\mathfrak{S}_d}$.

We define the {\em divided powers supergalebra} to be the dual Hopf superalgebra:  
$$\Gamma^{\udot}\hmm M= (S^{\udot} M^\ast)^{\mathsf{gr},*} = \bigoplus_{d\ge 0}\Gamma^dM.$$  
%, together with
%the Hopf superalgebra structure which is dual to that of $S^{\udot}\hm (M^\asterisk)$. 
Let us denote the multiplication of elements $f,g \in \Gamma^\udot M$ by $f\coasterisk g$. 
Then $\Gamma^\udot(\ )$ also satisfies the exponential property.  This follows by duality 
from the exponential property for $S^\udot (\ )$, since there is an isomorphism
\begin{equation}\label{eq:exponential}
\Gamma^\udot(M)\otimes \Gamma^\udot(N)\ \cong\ \Gamma^\udot(M\oplus N),
\end{equation}
given by $x\otimes y \mapsto x\coasterisk y$, for all $x\in \Gamma^\udot(M)$,
$y\in \Gamma^\udot(N)$.
%
%It follows that $\Gamma^{\udot}\hmm M$ is a commutative superalgebra. 
%
We also have an isomorphism of ordinary (graded) algebras
\begin{equation}\label{eq:alg2}
 \underline{\Gamma^{\udot}\hmm M}\ \cong\ D^{}\hp \underline{M_0}\otimes \Lambda^{}\hp \underline{M_1},
\end{equation}
where $D^{} (\ )$ denotes the usual divided power algebra of a vector space.

Suppose $\bs i \in I(m|n,d)$ and $\bs j \in I(m|n,d')$. 
Then we define $\bs i \vee \bs j \in I(m|n,d+d')$ to be the element 
$$\bs i \vee \bs j = (i_1, \dots, i_{d}, j_1, \dots, j_{d'}).$$
Let us write $l(\bs i)=d$ if $\bs i \in I(m|n,d)$ for some $d>0$, and
we set
 $$I(m|n) = \bigcup_{d\geq1} I(m|n,d).$$ Then $(\bs i, \bs j) \mapsto \bs i\vee\bs j$ 
 defines a binary operation $I(m|n)\times I(m|n) \to I(m|n)$.
\smallskip

\begin{lemma}\label{lem:multiply} Let  
$\Zc = \Zc_0\sqcup \Zc_1=  (Z_1, \dots, Z_{m+n})$ be a $\Z_2$-homogeneous basis 
of $M = M_0\oplus M_1$ as above. 
Suppose $\bs i\in I(m|n,d)$, $\bs j \in I(m|n,d')$ are both standardized. %Then:
\smallskip
\begin{itemize} 
\item[(i)] 
Let $\Delta% = \Delta^{r+s}_{(r,s)}
: \Gamma^{r+s}M\to \Gamma^rM \otimes \Gamma^sM$
denote the components of the comultiplication 
$\Delta: \Gamma^\udot M \to \Gamma^{\udot}\hmm M \otimes \Gamma^{\udot}\hmm M$.
These components are given explicilty by:
%the comultiplication in $\Gamma^{\udot}\hmm M$ satisfies
%\bigskip
%
$$\displaystyle \Delta( Z^{(\bs i)} ) = \sum_{\mathrm{st}(\bs i^1\vee \bs i^2)= \bs i} 
{\text{sgn}_\Zc(\bs i^1\vee \bs i^2; \bs i)}\hpp
Z^{(\bs i^1)} \otimes  Z^{(\bs i^2)},$$%\smallskip 
summing over ordered pairs of standardized sequences 
$\bs i^1\in I(m|n, r)$, $\bs i^2\in I(m|n, s)$. %with $d_1+d_2=d$, 
\medskip

\item[(ii)] Multiplication on basis elements in $\Gamma^{\udot}\hmm M$ satisfies:
$$Z^{(\bs i)} \coasterisk Z^{(\bs j)}  = 
\begin{cases}
C\hp Z^{(\bs i\vee \bs j)}  & \text{if }\bs i\vee \bs j \text{ is } \Zc\text{-strict}\\
0 & \text{if }\bs i\vee \bs j \text{ is not } \Zc\text{-strict},
\end{cases}
$$
where  $C$ is some positive integer such that $C=1$ if and only if 
$\text{Im}(\bs i) \cap\text{Im}(\bs j) = \emptyset$.
\end{itemize}
\end{lemma}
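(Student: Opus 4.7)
Both parts follow by duality from the defining identification $\Gamma^\udot M = (S^\udot M^\ast)^{\mathsf{gr},\ast}$, under which the comultiplication on $\Gamma^\udot M$ is dual to the multiplication in $S^\udot M^\ast$, and the multiplication $\coasterisk$ is dual to the comultiplication in $S^\udot M^\ast$. For both parts, I would use the fact that a standardized, $\Zc$-restricted $\bs i$ gives the basis element $Z^{(\bs i)}$ of $\Gamma^d M$ dual to the standard monomial $\check Z^{(\bs i)} := \check Z_{i_1}\cdots \check Z_{i_d} \in S^d M^\ast$.

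For (i), I would expand $\Delta(Z^{(\bs i)})$ in the dual basis of $\Gamma^r M\otimes \Gamma^s M$, so that the coefficient of $Z^{(\bs i^1)}\otimes Z^{(\bs i^2)}$ (for standardized, $\Zc$-restricted $\bs i^1, \bs i^2$) equals $\la Z^{(\bs i)}, \check Z^{(\bs i^1)}\cdot \check Z^{(\bs i^2)}\ra$. In the supercommutative algebra $S^\udot M^\ast$, this product is the juxtaposed monomial $\check Z_{i^1_1}\cdots \check Z_{i^1_r}\check Z_{i^2_1}\cdots \check Z_{i^2_s}$, and sorting it into non-decreasing order via the supercommutative relations produces $\text{sgn}_\Zc(\bs i^1\vee \bs i^2; \bs i)\cdot \check Z^{(\bs i)}$ precisely when $\text{st}(\bs i^1\vee \bs i^2) = \bs i$ (and a different standardized monomial otherwise). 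Pairing with $Z^{(\bs i)}$ then extracts exactly the sign claimed, and summing over valid ordered pairs yields the formula.

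For (ii), I would use that the generators $\check Z_a$ of $S^\udot M^\ast$ are primitive, so extending $\Delta$ as a superalgebra map with the rule of signs \eqref{eq:signs} gives the Leibniz-type expansion
\[
 \Delta(\check Z_{k_1}\cdots \check Z_{k_d}) = \prod_{p=1}^d (\check Z_{k_p}\otimes 1 + 1\otimes \check Z_{k_p}),
\]
which is a signed sum indexed by subsets $T\subseteq \{1,\dots, d\}$. Pairing this with $Z^{(\bs i)}\otimes Z^{(\bs j)}$ and taking $\bs k = \text{st}(\bs i\vee \bs j)$ selects the subsets $T$ whose restricted subsequences realize $\bs i$ on $T$ and $\bs j$ on $T^c$. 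When $\bs i\vee \bs j$ is $\Zc$-restricted, the signs combine with the sign convention implicit in $Z^{(\bs i\vee \bs j)}$, leaving a purely positive multiplicity $C = \prod_{s\,:\,|Z_s|=0}\binom{wt(\bs i)_s + wt(\bs j)_s}{wt(\bs i)_s}$, which is a positive integer equal to $1$ exactly when $\text{Im}(\bs i)\cap \text{Im}(\bs j) = \emptyset$. When $\bs i\vee \bs j$ is not $\Zc$-restricted, its defining monomial in $S^d M^\ast$ contains a squared odd generator, so $Z^{(\bs i\vee \bs j)} = 0$ by definition, matching the second case.

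The main obstacle is the sign bookkeeping. For (i), this amounts to verifying that sorting the juxtaposed product by supercommutativity produces exactly $\text{sgn}_\Zc(\bs i^1\vee \bs i^2; \bs i)$ as defined via the inversion formula of the paper. For (ii), the delicate point is that the signs from the graded Leibniz expansion on $S^\udot M^\ast \otimes S^\udot M^\ast$ must combine with the signs implicit in the definition of $Z^{(\bs i\vee \bs j)}$ (for non-standardized $\bs i\vee \bs j$) so as to cancel, leaving the coefficient $C$ as a purely positive multinomial count with no residual sign.
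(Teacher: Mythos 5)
Your proof is correct, but it takes a genuinely different route from the paper's. The paper's proof is a one-liner: it invokes the isomorphism $\underline{\Gamma^{\udot} M} \cong D\,\underline{M_0} \otimes \Lambda\,\underline{M_1}$ from (\ref{eq:alg2}) and then cites the standard multiplication/comultiplication formulas for divided power and exterior algebras from Pirashvili and Weyman. That reduction cleanly separates the even and odd parts, which largely sidesteps the sign bookkeeping you flag as the main obstacle, at the cost of being non-self-contained. Your argument instead stays entirely within the graded-dual picture $\Gamma^{\udot} M = (S^{\udot} M^\ast)^{\mathsf{gr},\ast}$: for (i) you extract the coefficient of $Z^{(\bs i^1)}\otimes Z^{(\bs i^2)}$ by pairing against the product $\check Z^{(\bs i^1)}\cdot \check Z^{(\bs i^2)}$ and sorting by supercommutativity, and for (ii) you expand the Leibniz-type comultiplication of the monomial $\check Z^{(\mathrm{st}(\bs i\vee\bs j))}$ and identify $C$ as the product of binomial coefficients over even indices. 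Both computations are right, and your characterization of when $C=1$ (disjoint images, using that overlap in odd indices is already excluded by $\Zc$-restrictedness) is sound. What the paper's route buys is brevity and delegation of the sign-free even/odd cases to the literature; what your route buys is a self-contained, from-the-definitions derivation that does not rely on outside references, at the price of having to carry the signs through explicitly and verify they cancel as you indicate in the last paragraph.
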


\begin{proof}
The lemma follows from (\ref{eq:alg2}) and the usual properties of exterior algebras and divided powers
of ordinary vectors spaces, as described in \cite[Sec.~1]{Pirashvili}, \cite[Chap.\hpp 1]{Weyman}.
\end{proof}

Note that there is a surjective map, $m: M^{\otimes d} \twoheadrightarrow S^d M$, given
by $d$-fold multiplication in the symmetric superalgebra.  By duality and some abuse of
notation, we have a 
corresponding embedding $\Delta:\Gamma^d M \hookrightarrow  M^{\otimes d}$ given
by applying successive iterations of comultiplication.  This embedding is given 
explicitly as follows. 

If $\bs i \in I(m|n,d)$, then 
let us denote the stabilizer of $\bs i$ in $\Si_d$ by 
$\Si_{\bs i} = \{ \sigma \in \Si_d: \bs i.\sigma = \bs i\}$.
Then we have:
%$$\Delta^d(Z^{(\alpha)}) = 
\begin{equation}\label{eq:Delta}
\Delta(Z^{(\bs i)}) 
= \sum_{\sigma\in \Si_d/ \Si_{\bs i}} Z^{\bs i}\hmm\hm.\sigma 
= \sum_{\sigma\in \Si_d/ \Si_{\bs i}} \text{sgn}_\Zc(\bs i; \sigma) Z^{\hp \bs i.\sigma},
\end{equation}
where the last equality comes from (\ref{eq:sgn}).  
By the injectivity of $\Delta:\Gamma^d M \hookrightarrow  M^{\otimes d}$, we then have 
\begin{equation*}\label{eq:compare}
Z^{(\bs i)} = \text{sgn}_\Zc(\bs j; \sigma) Z^{(\bs j)}
\end{equation*}
 in $\Gamma^dM$, 
for any $\bs j\in I(m|n,d)$ such that there exists some $\sigma \in \Si_d$, 
with $\bs i = \bs j.\sigma$.
\smallskip

Notice also from the commutativity of diagram (\ref{eq:diagram1}) that we have 
$\Delta(x\coasterisk y) = \Delta(x) \coasterisk \Delta(y)$ for any $x,y \in\Gamma^\udot M$.
In particular, 
\begin{equation}\label{eq:div_tensor}
\Delta(x\coasterisk y) = \sum_{\sigma \in \Si_{d+e}/\Si_d\times \Si_e} (\Delta(x) \otimes \Delta(y)). \sigma,
\end{equation}
if $x\in \Gamma^dM$ and $y\in \Gamma^eM$.

%
%For any  $\alpha\in(\Z_{\geq0})^{\hp m+n}$ such that $\sum \alpha_i =d$, 
% let $\Si_\alpha$ denote the subgroup $\Si_{\alpha_1}\times \cdots \times \Si_{\alpha_{m+n}}$ of $\Si_d$.
%If $v\in M$, we write $v^{\otimes d} = v\otimes \cdots \otimes v$ ($d$ factors).
%Then we have 
%$$ \Delta( Z^{(\alpha)} ) = 
%\sum_{\sigma\in \Si_d/ \Si_\alpha}
%(Z_1^{\otimes \alpha_1}\otimes \cdots \otimes Z_{m+n}^{\otimes \alpha_{m+n}}).\sigma,$$
%%
%which belongs to $(M^{\otimes d})^{\Si_d}$.   
%

%ss
\section{Recollection of Schur Functors and Schur Complexes}\label{Sec:SchurFunct}
%In this section we assume that $\k$ is an arbitrary field.
In this section, we recall the definition of Schur complexes defined in \cite{ABW}.
As vector spaces, these are canonically isomorphic to images of the underlying
 vector spaces of the 
Schur superfunctors to be defined in the next section.
We also recall the definitions of Schur and Weyl functors, which are fundamental 
examples of objects in categories of {\em strict polynomial functors} 
(cf. \cite{TouzeRingel}, \cite{Krause}).

%ss
\subsection{Strict polynomial functors and bifunctors} \label{ss:poly}
%Let us briefly recall the definition of strict polynomial functors. 
Let $\vvec=\vvec_\k$ 
denote the category of finite dimensional $\k$-vector spaces.
%Denote the set of all linear maps $V\to W$ by $\Hom(V,W)$
Then define the category $\D^d_\k = \D^d(\vvec)$ with the same objects as $\vvec$
and with morphisms
$$\hom_{\D^d_\k}(V,W):= D^d\Hom(V,W),$$
for $V,W\in \vvec$, where $D^d(\ )$ denotes the ordinary $d$-th divided power of a vector
space.  The composition of morphisms in $\D^d_\k$ is given by the isomorphisms
\begin{align*}
  D^d(\Hom(V,W)) &= (\Hom(V,W)^{\otimes d})^{\Si_d} \cong  
  \Hom(V^{\otimes d}, W^{\otimes d})^{\Si_d}  \cong\ \Hom_{\k \Si_d}(V^{\otimes d},W^{\otimes d}),
 \end{align*}
where $\Si_d$ acts on $V^{\otimes d}, W^{\otimes d}$ by place permutations, and on
$\Hom(V^{\otimes d}, W^{\otimes d})$ by conjugation.
\smallskip

Notice that $\D^d_\k$ is a $\k$-linear category; i.e. hom-sets are vector spaces,
and the composition is bilinear. 
The category of {\em (homogeneous) strict polynomial functors} is defined to be
$$ \P_d\ =\ \Fct_\k(\D^d_\k, \vvec),$$
the category consisting of all $\k$-linear functors $T: \D^d_\k \to \vvec$.  
Hence if $T\in \P_d$, then for any $V,W \in \D^d_\k$, the induced map 
$$T_{_{V,W}}: \hom_{\D^d_\k}(V,W)\to  \Hom(TV,TW)$$
is linear.  Morphisms in $\P_d$ are natural transformations between functors.  
\smallskip

The category $\P_d$ is abelian, with the existence of kernels, cokernels, products and
coproducts being induced from the target category $\vvec$.   
Typical examples of objects in $\P_d$ are respectively given by the exterior power,
$\Lambda^d$, the symmetric power, $\Sym^d$, and the divided power,
$D^d$, of a vector space.
\smallskip

We now define tensor products of strict polynomial functors following \cite[Sec.~2]{Krause}. 
Let us define a category $\D^d_\k \otimes \D^e_\k$ with the same objects as $\vvec$ and
with morphisms
$$\hom_{\D^d_\k \otimes \D^e_\k}(V,W) = \hom_{\D^d_\k}(V,W) \otimes \hom_{\D^e_\k}(V,W).$$  
Given nonnegative integers $d$ and $e$, we have an embedding 
$\Si_d \times \Si_e \hookrightarrow \Si_{d+e}$.  This induces an embedding 
\begin{equation}\label{eq:embed}
D^{d+e}V \hookrightarrow D^{d}V\otimes D^{e}V,
\end{equation}
 for any $V\in \vvec$, given by the composition of the following maps
\begin{align*}
 D^{d+e}V = (V^{\otimes\hp d+e})^{\Si_{d+e}}\ \subset\ (V^{\otimes\hp d+e})^{\Si_d\times \Si_e}
 \simeq\ (V^{\otimes d})^{\Si_d}\otimes (V^{\otimes e})^{\Si_e} = \Gamma^dV \otimes \Gamma^eV.
\end{align*}
We then have a $\k$-linear functor 
$$i_{d,e}:\D^{d+e}_\k \hookrightarrow \D^d_\k \otimes \D^e_\k,$$ 
which is the identity on objects and which acts by (\ref{eq:embed}) on morphisms.  
Composition with this functor 
gives a tensor product $-\otimes-: \P_d \times \P_e \to \P_{d+e}$.  I.e., we define
$(S\otimes T)(V) = S(V)\otimes T(V)$ for any $V\in \D^{d+e}_\k$ and $S\otimes T$ acts by 
$i_{d,e}$ on morphisms.

\begin{example}
Suppose $\alpha \in (\Z_{\ge 0})^n$ and $|\alpha| = \sum \alpha_i = d$.  Then we have
polynomial functors, $\Sym^\alpha, \Lambda^\alpha, D^\alpha \in \P_d$, defined as the
tensor products
$$
X^\alpha \ =\ X^{\alpha_1} \otimes\ \cdots\ \otimes X^{\alpha_n},
$$
for $X= \Sym, \Lambda, D$ respectively.
\end{example}

We also  define {strict polynomial bifunctors}.
For this, we first define the category $\D^d_{\k\times \k}= \D^d(\vvec\times\vvec)$ with
the same objects as $\vvec\times \vvec$ and with morphisms
$$\hom_{\D^d_{\k\times\k}}((V,W),(V',W')) 
= \bigoplus_{e+f=d}\hom_{\D^e_\k}(V,W)\otimes \hom_{\D^f_\k}(V',W')$$
Then define the category of {\em (homogeneous) strict polynomial bifunctors}, 
$\P^{[2]}\hpp =\hpp \Fct_\k(\D^d_{\k \times \k}, \vvec).$ %\subset \Fct_\k(\D_\k \times \D_\k, \vvec).$$

Next we define an external tensor product 
$$-\boxtimes-: \P_d\times\P_e \to \P^{[2]}_{d+e}.$$ 
Let 
$\D_\k^e \boxtimes \D_\k^f \subset \D^d_{\k\times\k}$ denote the subcategory with the
same objects as $\D^d_{\k\times\k}$ but with morphisms:
$$\hom_{\D_\k^e\boxtimes\D_\k^f}((V,W),(V',W'))\hpp 
=\hpp \hom_{\D^e_\k}(V,W)\otimes \hom_{\D^f_\k}(V',W').$$
Now suppose $S\in \P_d$,  $T\in \P_e$. Let $S\boxtimes T \in \P^{[2]}_{d+e}$ denote the 
functor defined by setting, for $V,W \in \vvec$,
$$(S\boxtimes T)(V,W) := S(V)\otimes T(W),$$ 
and $S\otimes T$ acts on morphisms via the embedding 
$\D_\k^e \boxtimes \D_\k^f \hookrightarrow \D^d_{\k\times\k}$

%ss
\subsection{Schur and Weyl functors}

Now let $\Z_+^\infty$ denote the set of all infinite sequences $(\alpha_1, \alpha_2, \dots)$ 
of nonnegative integers such that $\alpha_i = 0$ for all but finitely many indices.  We identify any 
$\alpha\in (\Z_{\geq 0})^n$ as an element of $\Z_+^\infty$ by setting $\alpha_i = 0$ for $i\geq n+1$. 
For $\alpha\in \Z_+^\infty$, we write $|\alpha|= \sum \alpha_i$.  Also, let $l(\alpha)$ denote the cardinality 
of $\{i: \alpha_i\neq 0\}$.

A {\em partition} is a sequence $\lambda = (\lambda_1, \lambda_2, \dots) \in \Z_+^\infty$ with non-increasing
entries: $\lambda_1\geq \lambda_2 \geq \cdots$.   Let $\mathscr{P}$ denote the set of all partitions. 
We identify each $\lambda \in \mathscr{P}$ with its corresponding diagram 
$$\triangle_\lambda:=
\{ (i,j)\in (\Z_{>0})^2:\ 1\leq j\leq \lambda_i\}.
$$
A {\em skew partition} $\lambda/\mu$ is a pair of partitions such that $\mu \subset \lambda$; i.e.,
 $\mu_i \leq \lambda_i$ for $i\geq 1$.  The corresponding {\em skew diagram}, 
 $\triangle_{\lambda/\mu}$, is the complement of $\triangle_\mu$ in the set $\triangle_\lambda$.  
For $\lambda \in \mathscr{P}$, let $\lambda'=(\lambda'_1, \lambda'_2,\dots)$ denote the 
conjugate partition, where $\lambda'_i$ equals the cardinality of $\{j: \lambda_j \geq i\}$.

Let us first define a certain permutation associated to any skew partition.
Suppose $\mu \subset \lambda$ are partitions such that $|\lambda| - |\mu| = d$.  Then
each integer $r\in \{1, \dots, d\}$ can be written uniquely as a sum
  $$ r\ =\ (\lambda_1-\mu_1)\ +\ \cdots\ +\ (\lambda_{i-1}-\mu_{i-1})\ +\ j$$
 with $1\leq j \leq \lambda_i-\mu_i$.  The pair $(i, \hat{\jmath})= (i, \mu_i+j)$
 describes the position ($i$-th row, $\hat{\jmath}$-th column) of $r$ in the skew Young diagram
 $\triangle_{\lambda/\mu}$, and $\lambda/\mu$ determines a permutation
 $\sigma_{\lambda/\mu} \in \Si_d$ by 
  $$\sigma_{\lambda/\mu}(r)\ =\ (\lambda_1' - \mu_1')\ +\ \cdots\ +\ 
 (\lambda_{\hat{\jmath}-1}' - \mu_{\hat{\jmath}-1}')\ +\ (i-\mu_{\hat{\jmath}}'),$$
where $1\leq i\leq \lambda_j' - \mu_j'$.  Note that $\sigma_{\lambda'/\mu'} = \sigma_{\lambda/\mu}^{-1}$.
\smallskip

\begin{example} Let $\lambda=(4,3,1)$ and $\mu=(2,1)$.  Then we may number the corresponding
skew Young diagram
$\triangle_{(4,3,1)/(2,1)}$ in two different ways as follows:
$$\Yvcentermath1 \qquad \young(::12,:34,5), \qquad \young(::35,:24,1).$$  The first numbering
is from left to right in rows and from top to bottom, while the second numbering proceeds
down columns and then from left to right.
One may check that sending a number in a square of the left-hand diagram to the corresponding
number in the same square of the right-hand diagram gives rise to the permutation 
$\sigma_{\lambda/\mu}$.  In this case, we have $\sigma_{(4,3,1)/(2,1)} = (1, 3, 2, 5),$ written in cycle
notation. 
\end{example}
 
Next we define certain natural transformations in the category $\P_d$, for $d= |\lambda/\mu|$.
First, let $\theta_{\lambda/\mu}(V)$ be the composite of the linear maps
$$ \Lambda^{\lambda'/\mu'}\hp V\ \lhook\joinrel\rightarrow\ %\Pi^{\otimes d}\  
% \stackrel{1^{\otimes d}}{\relbar \joinrel \longrightarrow}\ 
V^{\otimes d}\ 
 \stackrel{\rho(\sigma_{\lambda/\mu})}{\relbar \joinrel \relbar \joinrel \relbar \joinrel \longrightarrow}\
V^{\otimes d}\ 
% \relbar \joinrel
  \twoheadrightarrow\ \Sym^{\lambda/\mu}\hp V,$$
where the first map is the canonical inclusion, $\rho$ is the representation of the symmetric
group acting by ordinary place permations, and the last map is given by $d$-fold multiplication 
in the algebra $\Sym V$.

Next let $\theta'_{\lambda/\mu}$ be defined as the composite
$$ D^{\lambda'/\mu'}\hp W\ \lhook\joinrel\rightarrow\ %\Pi^{\otimes d}\  
% \stackrel{1^{\otimes d}}{\relbar \joinrel \longrightarrow}\ 
W^{\otimes d}\ 
 \stackrel{\rho^-(\sigma_{\lambda'/\mu'})}{\relbar \joinrel \relbar\joinrel \relbar \joinrel \longrightarrow}\
W^{\otimes d}\ 
% \relbar \joinrel
  \twoheadrightarrow\ \Lambda^{\lambda/\mu}\hp W,$$

 \begin{definition}[Schur and Weyl functors]
Define the {\em Schur functor}, $S_{\lambda/\mu}$, of shape $\lambda/\mu$, to 
be the image of $\theta_{\lambda/\mu}$ in the category $\P_d$.  The {\em Weyl functor}, 
$W_{\lambda/\mu}$, is the image of the natural transformation $\theta'_{\lambda/\mu}$.
\end{definition} 

\begin{remark}
A similar construction of the Schur and Weyl functors, $S_{\lambda}$ and $W_{\lambda}$, 
corresponding to a partition $\lambda$ may be found in \cite{TouzeRingel} or \cite{Krause}. 
Note that when comparing our definition with the original definition in \cite{ABW} one must 
replace $\lambda/\mu$ by the conjugate shape $\lambda'/ \mu'$. 
For a useful dictionary of the various notations for Schur and Weyl functors (or modules) appearing 
in the literature, see \cite[Section 6.1.1]{TouzeRingel}.  
\end{remark}

%ss
\subsection{Schur complexes}
We define the following strict polynomial bifunctors
$$(\Lambda\boxtimes D)^d = \bigoplus_{i=0}^d \Lambda^i\boxtimes D^{d-i}\quad
 \text{and}\quad (\Sym\boxtimes\Lambda)^d =\bigoplus_{i=0}^d \Sym^i\boxtimes\Lambda^{d-i}.$$
Notice there is a canonical embedding $(\Lambda\boxtimes D)^d(V,W) \hookrightarrow (V\oplus W)^d$
given by tensoring the canonical embeddings for $\Lambda^i$ and $D^j$: 
$$\bigoplus_{i+j=d} \Lambda^i V \otimes D^{j} W \hookrightarrow \bigoplus_{i+j=d} V^{\otimes i} 
\otimes W^{\otimes j}\hpp \large{\subset}\hpp (V\oplus W)^d.$$ 
Hence there is a canonical embedding $(\Lambda\boxtimes D)^d \hookrightarrow (I^{\otimes d})^\text{bi}$.

Also, $\Sym V \otimes \Lambda W$ is a graded algebra such that 
$$(\Sym V \otimes \Lambda W)_d = \bigoplus_{i= 0}^d \Sym^i V \otimes D^{d-i} W = (\Sym \boxtimes D)^d(V,W).$$
Hence, $d$-fold multiplication in the algebra $\Sym V \otimes D W$ yields a canonical map
$(V\oplus W)^{\otimes d} \twoheadrightarrow (\Sym \boxtimes D)^d(V,W)$.
This yields a natural transformation $(I^{\otimes d})^\text{bi} \twoheadrightarrow (\Sym \boxtimes \Lambda)^d$.

Next, we define an action of the symmetric group on $(V\oplus W)^{\otimes d}$ which is a
combination of the two action, $\rho$ and $\rho^-$, considered above.  
Let $\rho^\text{sc}: \Si_d \to \End((V\oplus W)^{\otimes d})$ be the unique representation
such that the transposition $(i\ i+1)$ maps:
$$u_1 \otimes \cdots \otimes u_d \mapsto \begin{cases} 
-(u_1 \otimes \cdots \otimes u_{i+1} \otimes u_i\otimes \cdots \otimes u_d)& \text{if } u_i\in W \text{ and } u_{i+1} \in W,\\
\ u_1 \otimes \cdots \otimes u_{i+1} \otimes u_i\otimes \cdots \otimes u_d & \text{if $u_i\in V$ or $u_{i+1} \in V$}.
\end{cases}$$
Note that if we consider $V,W$ as purely even superspaces, then this agrees with the action
of $\Si_d$ on the superspace $(V\oplus \Pi W)^{\otimes d}$ described in Section \ref{ss:divided} above.
This then defines a natural transformation $\rho^\text{sc}(\sigma): (I^{\otimes d})^\text{bi} \to (I^{\otimes d})^\text{bi}$,
for any $\sigma \in \Si_d$.

%Since tensor products are defined in $(\P_d)^\text{bi}$, let us define...
Now let $\theta^\text{sc}_{\lambda/\mu}$ be the natural transformation defined as the composite
$$ (D\boxtimes \Lambda)^{\lambda'/\mu'} \lhook\joinrel\rightarrow\ %\Pi^{\otimes d}\  
% \stackrel{1^{\otimes d}}{\relbar \joinrel \longrightarrow}\ 
(I^{\otimes d})^\text{bi} 
 \stackrel{\rho^\text{sc}(\sigma_{\lambda/\mu})}{\relbar \joinrel \relbar\joinrel \relbar \joinrel \longrightarrow}\
(I^{\otimes d})^\text{bi} 
% \relbar \joinrel
  \twoheadrightarrow\ (\Sym \boxtimes\Lambda)^{\lambda/\mu},$$
where the first map is given by tensoring the canonical inclusions, 
 and the third map is given by tensoring the respective multiplication maps.

\begin{definition}\label{def:SchurFunct}
The image of the natural transformation ${\displaystyle \theta^\text{sc}_{\lambda/\mu}}$ 
in $(\P_d)^{bi}$, denoted by $SC_{\lambda/\mu}$, is called
the {\em Schur complex bifunctor} of shape $\lambda/\mu$.
\end{definition}
\smallskip

We notice that, for $V,W\in \vvec$, we have 
${\displaystyle \theta^{sc}_{\lambda/\mu}(V,0) = \theta_{\lambda/\mu}(V)}$
and ${\displaystyle \theta^{sc}_{\lambda/\mu}}(0,W) = \theta'_{\lambda/\mu}(W)$.  
Hence, we have that
\begin{equation}\label{eq:SchurWeylComplex}
S_{\lambda/\mu}= SC_{\lambda/\mu}(-, 0)\quad
\text{ and }\quad W_{\lambda/\mu}= SC_{\lambda/\mu}(0,-).
\end{equation}
%Furthermore, it follows from 

\begin{remark}
To any linear map $\phi:W\to V$, there was defined in \cite{ABW} an associated complex, ``$L_{\lambda'/\mu'}\phi$,"
called the {\em Schur complex} of shape $\lambda/\mu$.  It is not difficult to check that $SC_{\lambda/\mu}(V,W)$
is (linearly) isomorphic to the Schur complex $L_{\lambda'/\mu'}\phi$ of the conjugate shape, regardless of which 
map $\phi$ is chosen.
\end{remark}

 %Sec
\section{The Schur and Weyl Superfunctors}\label{Sec:superSchur}
In this section, we construct the Schur and Weyl superfunctors
via certain $\Svec$-enriched natural transformations in the 
categories of polynomial superfunctors.  We also provide a 
standard basis for the evaluation of a Schur superfunctor on
a vector superspace in terms of tableaux.

%ss
\subsection{Categories %$\Pol^\I, \Pol^\II$ 
of strict polynomial superfunctors}\label{ss:superpoly}
Suppose $A$ is an associative superalgebra, and let 
$d\in \Z_{\ge 1}$.  Notice that 
the right action of $\sigma \in \Si_d$ on the tensor power 
$A^{\otimes d}$ is in fact a superalgebra automorphism.    
Denote by $A\wr\Si_d$ the vector superspace
 \begin{equation*}%\label{eq:serg}
 A\wr\Si_d = \k \Si_d \otimes A^{\otimes d}
\end{equation*}
(where the group algebra $\k \Si_d$ is viewed as superspace 
concentrated in degree zero).  
We then consider $A\wr\Si_d$ as a superalgebra with 
multiplication defined by the rule
\[
(\sigma \otimes a)(\sigma' \otimes b) = \sigma \sigma' \otimes (a\cdot \sigma') b
\]
for $\sigma, \sigma' \in \Si_d$, $a,b \in A$.  
In what follows, we will 
identify $A^{\otimes d}$ (resp.\ $\k\Si_d$) with the subsuperalgebra 
$ 1\otimes A^{\otimes d}$ (resp.~$\k \Si_d\otimes 1$) of $A\wr\Si_d$.

\begin{example}[Sergeev superalgebra]
If $A = \k$, then $\k \wr \Si_d = \k \Si_d$, the group 
algebra of $\Si_d$.  On the other hand, if we identity $\C_d$
with $\C_1^{\otimes d}$ via the isomorphism in Example \ref{ex:cliff} 
then $\C_1 \wr \Si_d = \W_d,$ the {\em Sergeev superalgebra}
(cf. \cite{BKprojective}). 
\end{example}

Recall that a superalgebra is called {\em simple} if it has no
nonzero proper superideals. E.g., $\k$ and $\C_1$ are both
simple superalgebras.
Suppose now that $B$ is a simple finite dimensional superalgebra. 
For each $d \in \Z_{\ge 0}$, we define 
a new category $\mathsf\Gamma^d_{\hmm B}= \mathsf\Gamma^d(\smod\hpp B)$. 
 The objects of $\mathsf\Gamma^d_{\hmm B}$ 
are the same as those of $\smod\hpp B$, i.e. finite dimensional right 
$B$-supermodules.  Given $M,N \in \smod\hpp B$, set
$$\hom_{\mathsf\Gamma^d_{\hmm B}}(M,N) := \Gamma^d \Hom_B(V,W).$$  
In order to define a composition law, we need the 
following:
\begin{lemma}[\cite{Axtell}, Lemma 3.1] \label{lem:Ax}
Let $V\in \smod\hpp B$.  Then $V^{\otimes d} \in \smod\hpp {B\wr\Si_d}$. 
Furthermore if $V,W \in \smod\hpp B$, then we have a natural isomorphism
\begin{equation}\label{eq:isom2}
\Hom_{B\wr\Si_d}(V^{\otimes d},W^{\otimes d})\ \simeq\ \Gamma^d\Hom_B(V,W).
\end{equation}
\end{lemma}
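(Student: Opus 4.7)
The plan is to build the $B\wr\Si_d$-action on $V^{\otimes d}$ by combining the component-wise $B^{\otimes d}$-action with the signed place-permutation $\Si_d$-action, and then to compute the hom-space in two stages: first strip off $\Si_d$-equivariance, and then invoke the simplicity of $B$ to convert a hom over $B^{\otimes d}$ into a $d$-fold tensor of homs over $B$. Concretely, for $v = v_1\otimes\cdots\otimes v_d\in V^{\otimes d}$ and $\sigma\otimes a\in B\wr\Si_d$ with $a = a_1\otimes\cdots\otimes a_d$, I would set $v.(\sigma\otimes a) := (v.\sigma).a$, where $v.\sigma$ uses \eqref{eq:sgn} and the action by $a$ is component-wise with the rule of signs \eqref{eq:signs}. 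The nontrivial check is the associativity $(v.(\sigma\otimes a)).(\sigma'\otimes b) = v.((\sigma\otimes a)(\sigma'\otimes b))$, which reduces to the identity $((v.\sigma).a).\sigma' = (v.\sigma\sigma').(a.\sigma')$ and ultimately to the $\Si_d$-equivariance of the action map $V^{\otimes d}\otimes B^{\otimes d}\to V^{\otimes d}$, a straightforward sign check on transpositions since both actions come from the same Koszul convention.

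For the hom-space isomorphism, a $B^{\otimes d}$-linear map is $B\wr\Si_d$-linear exactly when it commutes with the signed $\Si_d$-actions on source and target, so
$$\Hom_{B\wr\Si_d}(V^{\otimes d},W^{\otimes d})\ \simeq\ \Hom_{B^{\otimes d}}(V^{\otimes d},W^{\otimes d})^{\Si_d},$$
where $\Si_d$ acts by signed conjugation. It therefore suffices to exhibit a natural, $\Si_d$-equivariant even isomorphism
$$\Hom_B(V,W)^{\otimes d}\ \simeq\ \Hom_{B^{\otimes d}}(V^{\otimes d},W^{\otimes d}),$$
since taking $\Si_d$-invariants of the left-hand side is by definition $\Gamma^d\Hom_B(V,W)$. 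The candidate map sends $f_1\otimes\cdots\otimes f_d$ to the Koszul-signed assembly $v_1\otimes\cdots\otimes v_d\mapsto \pm\, f_1(v_1)\otimes\cdots\otimes f_d(v_d)$, and its $\Si_d$-equivariance is automatic from the symmetry of the sign convention.

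The main obstacle is showing that this assembly map is bijective. In the ungraded case it is classical, but in the super setting one must verify both that the signs conspire to make each image $B^{\otimes d}$-linear and that surjectivity holds. Here the hypothesis that $B$ is simple and finite dimensional is essential: every object of $\smod\hpp B$ decomposes as a finite direct sum of parity shifts of the unique simple $B$-supermodule $V_0$, so both sides are biadditive in $(V,W)$ and the claim reduces to $V=W=V_0$. In that case $\End_B(V_0)$ is a super-division algebra (either $\k$ or $\C_1$), and a direct dimension count identifies $\End_B(V_0)^{\otimes d}$ with $\End_{B^{\otimes d}}(V_0^{\otimes d})$, completing the proof of \eqref{eq:isom2}.
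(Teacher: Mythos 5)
The paper itself does not prove this lemma: it is quoted verbatim from \cite{Axtell}, Lemma 3.1, and no argument is given here, so there is no ``paper's own proof'' to compare against. Your proposal therefore has to be judged on its own terms. The two-stage reduction is the right idea: first observe that a $B^{\otimes d}$-linear map $\phi$ is $B\wr\Si_d$-linear iff $\phi(v.\sigma)=\phi(v).\sigma$, so $\Hom_{B\wr\Si_d}(V^{\otimes d},W^{\otimes d})$ is exactly the $\Si_d$-invariants of $\Hom_{B^{\otimes d}}(V^{\otimes d},W^{\otimes d})$ under conjugation; then identify $\Gamma^d\Hom_B(V,W)$ with $\bigl(\Hom_B(V,W)^{\otimes d}\bigr)^{\Si_d}$ using the canonical identification of $\Gamma^d$ with $\Si_d$-invariants stated in Section~\ref{ss:divided}. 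The associativity check for the wreath action and the $\Si_d$-equivariance of the assembly map are both routine sign verifications, as you say.

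The one place your write-up has a genuine gap is the sentence ``both sides are biadditive in $(V,W)$ and the claim reduces to $V=W=V_0$.'' Taken literally this is false: $V\mapsto\Hom_B(V,W)^{\otimes d}$ and $V\mapsto\Hom_{B^{\otimes d}}(V^{\otimes d},W^{\otimes d})$ are degree-$d$ functors, not additive ones, so decomposing $V=V_0^{\oplus n}$ does not split either side into a direct sum over copies of $V_0$ alone. The fix is standard but must be stated: prove instead the multi-additive statement that the assembly map
\[
\Hom_B(V_1,W_1)\otimes\cdots\otimes\Hom_B(V_d,W_d)\ \longrightarrow\ \Hom_{B^{\otimes d}}(V_1\otimes\cdots\otimes V_d,\ W_1\otimes\cdots\otimes W_d)
\]
is an isomorphism. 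This map \emph{is} additive in each slot separately, so the cross-term decompositions of $(V_1\oplus V_2)^{\otimes d}$ and of $(\Hom_B(V_1,W)\oplus\Hom_B(V_2,W))^{\otimes d}$ match term by term, and you may reduce to the case where each $V_i,W_j$ is a (parity shift of) the simple module $V_0$. Only then specialize $V_1=\cdots=V_d=V$ and $W_1=\cdots=W_d=W$. Two further small points worth recording: injectivity of the assembly map is free because it is the restriction to subspaces of the classical isomorphism $\Hom_\k(V,W)^{\otimes d}\simeq\Hom_\k(V^{\otimes d},W^{\otimes d})$, so surjectivity is the only thing the dimension count needs to deliver; and the claim that $\End_B(V_0)$ is ``either $\k$ or $\C_1$'' is an algebraically-closed-field statement --- it covers the two cases $B=\k$ and $B=\C_1$ that the paper actually uses, but for a general simple $B$ over a general $\k$ the super-division algebra $D=\End_B(V_0)$ can be larger, and one should either restrict the statement or argue the surjectivity of $D^{\otimes d}\to\End_{B^{\otimes d}}(V_0^{\otimes d})$ by a double-centralizer argument rather than a raw dimension count.
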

Using the isomorphism (\ref{eq:isom2}) for any $V,W \in \smod\hpp B$, 
composition in $\smod\hpp {B\wr\Si_d}$ then induces a composition law 
in $\mathsf\Gamma^d_{\hmm B}$.  In particular, we are concerned with the categories
$$\mathsf\Gamma^d_\M = \mathsf\Gamma^d_{\k}\quad \mbox{ and }
\quad \mathsf\Gamma^d_\Q= \mathsf\Gamma^d_{\C_1}.$$
This notation comes from the fact that $\k \cong \mathcal{M}_{1|0}$ and $\C_1\cong \mathcal{Q}_1$ are
the smallest examples, respectively, in the series $\mathcal M_{m|n}$, $\mathcal Q_n$ of all 
finite dimensional simple superalgebras.  (See Example 2.4 and Remark 2.8 of 
\cite{Axtell} for more details.)

Clearly, $\mathsf\Gamma^d_\M$ and $\mathsf\Gamma^d_\Q$ are both 
$\Svec$-enriched categories.    
\begin{definition}
Let $\Pol^\I_d = \mathrm{Fct}_\k(\mathsf\Gamma^d_\M, \svec)$, the category 
of all even $\k$-linear functors from $\mathsf\Gamma^d_\M$ to $\svec$.  
Similarly, let $\Pol^\II_d= \Fct_\k(\mathsf\Gamma^d_\Q,\svec)$.  In both cases, 
morphisms are $\Svec$-enriched natural transformations (see Section \ref{ss:svec}). 
The objects of $\Pol^\I_d, \Pol^\II_d$ are called {\em (homogeneous, 
strict) polynomial superfunctors of type $\I$, $\II$,} respectively.   
\end{definition}
Notice that $\Pol_d^\I$ and $\Pol_d^\II$ are both $\Svec$-enriched categories, 
and the underlying even subcategories $(\Pol_d^\I)_\ev$ and $(\Pol_d^\II)_\ev$
are abelian.  In \cite[Section 4.2]{Axtell}, the objects of $\Pol^\II_d$ are also 
referred to as {\em spin polynomial functors}.

\begin{example}
In the category $\Pol^\I_1$, we have the identity $I: \svec\to \svec$.  The (right) parity 
change functor, $\Pi: \svec\to \svec$, is also seen to be an object of
$\Pol^\I_1$.  Notice that we have an {\em odd} $\Svec$-enriched natural 
isomorphism $id: I \to \Pi$, where $id_M: M \to \Pi M$ is just the identity
map on $M$.
\end{example}

The functor, $F_\C:\smod\hpp {\C_1}\to \svec$, which forgets the $\C_1$-action, 
induces a forgetful functor $F_{\C,d}:\mathsf\Gamma^d_\Q \to \mathsf\Gamma^d_\M$, 
for any $d\geq 0$.  Composition with $F_{\C,d}$ then induces a functor 
$$\text{Res}_{\C,d} = -\circ F_{\C,d}: \Pol^\I_d \to \Pol^\II_d.$$
We again let $\Pi$ denote the functor $\text{Res}_{\C,1}(\Pi)\in \Pol^\II_1$.

\begin{example}\label{ex:object}
Let $d\in \Z_{\ge 0}$, and suppose $V\in \Gamma^d_\mathtt{X}$ ($\mathtt{X} =\M,\Q$).  
Then we define the following objects of $\Pol^\I_d$, $\Pol^\II_d$: %\vspace{-.3cm}
$$ \Gamma^{d,V}:= \hom_{\mathsf{\Gamma}^d_\mathtt{X}}(V, \cdot),\qquad \qquad 
S^d_V:= \hom_{\mathsf{\Gamma}^d_\mathtt{X}}(\cdot, V^\ast)^\ast.$$
%\qquad (\mathtt X = \M, \Q) $$
Let us set 
$\Gamma^d= \Gamma^{d,\k}$ and $\Gamma_\Pi^d = \Gamma^{d, \k^{0|1}}$
in $\Pol^\I_d$, while in the category $\Pol^\II_d$ we write
$\Gamma^d =  \Gamma^{d,\C_1}$ and $\Gamma^d_\Pi = \Gamma^{d, \Pi \C_1}$.
There is clearly an even isomorphism 
$\Gamma^d \simeq \Gamma_\Pi^d$ of objects in %the category 
$\Pol^\II_d$,  since $\Pi \C_1 \simeq \C_1$ in $\Gamma^d_\Q$. 
Notice that $\Gamma^1 = I$, $\Gamma^1_\Pi = \Pi$ in both $\Pol^\I_d$ and $\Pol^\II_d$. 
The objects $S^d$ and $S_\Pi^d$ in $\Pol^\I_d$, $\Pol^\II_d$ are defined in a
similar way. 
\end{example}

We recall that the {\em Kuhn dual} of $T\in \Pol^{\dagger}_d$ ($\dagger= \I,\II$) 
is the object $T^\#\in \Pol^{\dagger}_d$  which is defined so that
$T^\#(V)= T(V^{\asterisk})^\asterisk$ for all $V\in \mathsf\Gamma^d_\texttt{X}$ ($\texttt{X}=\M,\Q$). Cleary, we have
a canonical isomorphism
\begin{equation}\label{eq:dual}
S^d \simeq (\Gamma^d)^\#
\end{equation}
of objects in $\Pol^{\dagger}_d$.

We may also form the tensor product of polynomial superfunctors.  
Suppose given nonnegative integers $d$ and $e$.  Then, as in (\ref{eq:embed}), the embedding 
$\Si_d \times \Si_e \hookrightarrow \Si_{d+e}$ induces an embedding 
\begin{equation}\label{eq:embed2}
\Gamma^{d+e}M \hookrightarrow \Gamma^{d}M\otimes \Gamma^{e}M
\end{equation}
 for any $M\in \svec$.  
We may also consider the categories $\mathsf\Gamma^d_\M \otimes \mathsf\Gamma^e_\M$, 
$\mathsf\Gamma^d_\Q\otimes \mathsf\Gamma^e_\Q$ whose objects are the same as $\svec_\k$, $\smod\hpp {\C_1}$ 
and whose morphisms are of the form
$$\hom_{\mathsf\Gamma^d_\M}(M,N)\otimes \hom_{\mathsf\Gamma^e_\M}(M,N), 
\quad \hom_{\mathsf\Gamma^d_\Q}(V,W)\otimes \hom_{\mathsf\Gamma^e_\Q}(V,W)$$  
respectively for $M,N \in \svec_\k$ and $V,W\in \smod\hpp {\C_1}$.  Here, we assume that 
the composition of morphisms obeys the usual rule of signs conventions as in (\ref{eq:signs}).  

One may show that (\ref{eq:embed2}) yields embeddings of categories
\begin{equation}\label{eq:embed3}
 \mathsf\Gamma^{d+e}_\M \hookrightarrow \mathsf\Gamma^d_\M \otimes \mathsf\Gamma^e_\M, \qquad  
 \mathsf\Gamma^{d+e}_\Q \hookrightarrow \mathsf\Gamma^d_\Q \otimes \mathsf\Gamma^e_\Q.
\end{equation}
Then, as in Section \ref{ss:poly}, (\ref{eq:embed3}) yields induced bifunctors:
$$-\otimes-: \Pol^{\dagger}_d \times \Pol^{\dagger}_e \rightarrow \Pol^{\dagger}_{d+e}.$$
For example, if we let $\alpha\in (\Z_{\geq 0})^n$, then we have objects 
$$S^\alpha =\ S^{\alpha_1} \otimes\ \cdots\ \otimes S^{\alpha_n},  \qquad 
\Gamma^\alpha =\ \Gamma^{\alpha_1} \otimes\ \cdots\ \otimes \Gamma^{\alpha_n}$$  
 which belong to $ \Pol^{\dagger}_d$ $(\dagger= {\I,\II})$, where
$d= |\alpha|$.  
Also, we have the objects $I^{\otimes d} = I \otimes \cdots \otimes I$ and 
$\Pi^{\otimes d} = \Pi \otimes \cdots \otimes \Pi$ which belong to $\Pol^\I_d$.  
Since $1: \Pi \to I$ is an odd $\Svec$-enriched natural transformation, we have an isomorphism 
\begin{equation}\label{eq:tensor}
1^{\otimes d}: \Pi^{\otimes d}\ \cong\ I^{\otimes d}
\end{equation}
which is an odd (resp.~even) morphism if $d$ is odd (resp.~even).

Notice that the canonical embedding $\Delta:\Gamma^dM \hookrightarrow M^{\otimes d}$ and
surjection $m:M^{\otimes d}\twoheadrightarrow S^dM$ of 
superspaces induce even morphisms 
\begin{equation}\label{eq:canonical}
\Delta:\Gamma^d \hookrightarrow I^{\otimes d}\quad  \text{ and }\quad m:I^{\otimes d} \twoheadrightarrow S^d,
\end{equation}
respectively.  
It follows from (\ref{eq:tensor}) and (\ref{eq:canonical})  
that we have even morphisms:
\begin{equation}\label{eq:tensor2}
 \Delta_\Pi:\Gamma^d_\Pi \hookrightarrow I^{\otimes d}\quad \text{and} \quad
 m_\Pi:I^{\otimes d} \twoheadrightarrow S^d_\Pi,
\end{equation}
where
$$ \Delta_\Pi = 
\begin{cases}
\Delta &\text{if }d\text{ is even},\\
\Pi\circ \Delta &\text{if }d\text{ is odd},
\end{cases} \quad \text{and}   \quad m_\Pi = 
\begin{cases}
m &\text{if }d\text{ is even},\\
\Pi\circ m &\text{if }d\text{ is odd}.
\end{cases}
$$
Also, for any any $\alpha\in (\Z_{\geq 0})^n$, set 
$$\Gamma^\alpha_\Pi := \Gamma^{\alpha_1}_\Pi \otimes \cdots \otimes \Gamma^{\alpha_n}_\Pi, \qquad 
S^\alpha_\Pi := S^{\alpha_1}_\Pi\otimes \cdots \otimes S^{\alpha_n}_\Pi.$$ 
Then, by some more abuse  of notation, we also have canonical even morphisms: 
$\Delta_\Pi: \Gamma^\alpha_\Pi \hookrightarrow I^{\otimes |\alpha|}$ and %\quad \text{ and } %\qquad 
%\quad 
$m_\Pi:I^{\otimes |\alpha|} \twoheadrightarrow S^\alpha_\Pi,$
given by taking the tensor products of the canonical maps (\ref{eq:tensor2}).
\smallskip

%ss
\subsection{Schur and Weyl superfunctors}\label{ss:superSchur}
We now proceed to construct super analogues of the Schur and Weyl functors.  
As we will see in 
Proposition \ref{prop:SchurComplex} below, the image of a Schur superfunctor is canonically 
isomorphic to a corresponding Schur complex.
\smallskip

Suppose now that $\mu \subset \lambda$ are partitions and $|\lambda| - |\mu|= d$.
Let us write $X^{\lambda/\mu} = X^{\lambda-\mu}$, for $X=\Gamma, S$, respectively. 
We then define an $\Svec$-enriched natural transformation 
$\hat{\theta}_{\lambda/\mu}: \Gamma^{\lambda'/\mu'}_\Pi\to S^{\lambda/\mu}$
as the composition of morphisms
$$ \Gamma_\Pi^{\lambda'/\mu'}\ 
 \stackrel{\Delta_\Pi}{\relbar\joinrel\relbar \joinrel \longrightarrow}\ 
I^{\otimes d}\ 
 \stackrel{\sigma_{\lambda/\mu}}{\relbar \joinrel \relbar \joinrel \longrightarrow}\
I^{\otimes d}\ 
 \stackrel{m}{\relbar \joinrel\relbar \joinrel\to}\ S^{\lambda/\mu},$$
where
the action of  $\sigma_{\lambda/\mu}$ on $I^{\otimes d}$ is induced by the action of $\Si_d$ on 
the superspace $M^{\otimes d}$, for any $M\in \Gamma^d_\texttt{X}$ ($\texttt{X}= \M,\Q$), respectively.

Recall the restriction functor, $\text{Res}_{\C,d}: \Pol^\I_d \to \Pol^\II_d$, induced by the forgetful
functor, $F_{\C,d}: \mathsf\Gamma^d_\Q \to \mathsf\Gamma^d_\M$, described above.  

\begin{definition}
The {\em Schur superfunctor, $\widehat{S}_{\lambda/\mu} = \widehat{S}_{\lambda/\mu}^\I$, 
of type $\I$} is the polynomial superfunctor defined by setting $\widehat{S}_{\lambda/\mu}(M)$ 
equal to the image of the map $\hat{\theta}_{\lambda/\mu}(M)$ for any $M\in \mathsf\Gamma^d_\M$.
The {\em Schur superfunctor of type $\II$} is defined by restriction: 
$\widehat{S}_{\lambda/\mu}^\II = \text{Res}_{\C,d}(\widehat{S}_{\lambda/\mu})$.
\end{definition}

We may also define an $\Svec$-enriched natural transformation 
$\check{\theta}_{\lambda/\mu}: \Gamma^{\lambda'/\mu'}\to S^{\lambda/\mu}_\Pi$
as the composition
$$ \Gamma^{\lambda'/\mu'}\ \stackrel{\Delta}{\relbar \joinrel \longrightarrow}\ I^{\otimes d}\  
 \stackrel{\sigma_{\lambda'/\mu'}}{\relbar \joinrel\relbar \joinrel \relbar \joinrel \longrightarrow}\
I^{\otimes d}\ 
 \stackrel{m_\Pi}{\relbar \joinrel\relbar \joinrel\relbar \joinrel \to}\ S^{\lambda/\mu}_\Pi,$$
where each map is defined in a similar way to the above.

\begin{definition}
The {\em Weyl} (or {\em co-Schur}) {\em superfunctor, 
$\widehat{W}_{\lambda/\mu}=\widehat{W}_{\lambda/\mu}^\I$, 
of type $\mathsf{I}$} is defined by setting $\widehat{W}^\I_{\lambda/\mu}(M)$ equal to the image of 
$\check{\theta}_{\lambda/\mu}(M)$, and the {\em Weyl superfunctor of type $\II$} is given by: 
$\widehat{W}_{\lambda/\mu}^\mathsf{II} = \text{Res}_{\C,d}(\widehat{W}_{\lambda/\mu})$.
\end{definition}

\begin{remark}
Notice that the polynomial superfunctors $\Pi$ and ${I}$ are both canonically isomorphic
to their own Kuhn duals.  It is also not difficult to check that $(S\otimes T)^\#$ is canonically
isomorphic to $S^\#\otimes T^\#$ for all $S,T \in \Pol^\mathsf{I}_\k$. 
It then follows from the definition of the morphisms $\hat{\theta}_{\lambda/\mu}$ and 
$\check{\theta}_{\lambda/\mu}$ and from (\ref{eq:dual}) that there is a canonical
isomorphism 
\begin{equation*}
\widehat{W}_{\lambda'/\mu'} \simeq (\widehat{S}_{\lambda/\mu})^\#
\end{equation*}
which is analogous to the usual duality between Schur and Weyl functors
(cf. \cite[Prop.~II.4.1]{ABW}).
For this reason, we consider only Schur superfunctors in the remainder. 
\end{remark}

\begin{proposition}\label{prop:SchurComplex}
Suppose $M\in \mathsf\Gamma^d_\M$.  Then there is a canonical isomorphism
$$SC_{\lambda/\mu}(\hpp\underline{M_0},\hp \underline{M_1}\hpp) \ \cong\ \underline{\widehat{S}_{\lambda/\mu}(M)}$$
of vector spaces.  
Furthermore, there are natural isomorphisms
$$ \widehat{S}_{\lambda/\mu}(M_0)\ \cong\ S_{\lambda/\mu}(\underline{M_0}) \quad
\text{and} \quad \widehat{S}_{\lambda/\mu}(M_1) \cong W_{\lambda/\mu}(\underline{M_1}),$$
where $S_{\lambda/\mu}$, $W_{\lambda/\mu}$ are the ordinary Schur and 
Weyl functors respectively.
\end{proposition}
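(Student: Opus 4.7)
The plan is to identify the two images, $SC_{\lambda/\mu}(\underline{M_0},\underline{M_1})$ and $\underline{\widehat{S}_{\lambda/\mu}(M)}$, as images of essentially the same linear map between naturally isomorphic ambient vector spaces, and then to deduce the second statement by specialization.

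First, using the exponential property \eqref{eq:alg1} of the symmetric superalgebra together with the tensor-product definition of $S^{\lambda/\mu}$, one obtains
$$\underline{S^{\lambda/\mu}(M)} \;\cong\; (\Sym \boxtimes \Lambda)^{\lambda/\mu}(\underline{M_0},\underline{M_1}).$$
Dually, applying the exponential property \eqref{eq:alg2} to $\Pi M$, and using $(\Pi M)_0 = M_1$ and $(\Pi M)_1 = M_0$, followed by the natural tensor-swap, gives
$$\underline{\Gamma^{\lambda'/\mu'}_\Pi(M)} = \underline{\Gamma^{\lambda'/\mu'}(\Pi M)} \;\cong\; (\Lambda \boxtimes D)^{\lambda'/\mu'}(\underline{M_0},\underline{M_1}),$$
which matches the source of $\theta^{\mathrm{sc}}_{\lambda/\mu}$ (reading the symbol $(D\boxtimes\Lambda)^{\lambda'/\mu'}$ in the definition as the bifunctor $(\Lambda\boxtimes D)^{\lambda'/\mu'}$ introduced immediately beforehand). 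Under the identification $\underline{M^{\otimes d}} \cong (\underline{M_0}\oplus\underline{M_1})^{\otimes d}$, the supertwist action of $\sigma_{\lambda/\mu}$ matches $\rho^{\mathrm{sc}}(\sigma_{\lambda/\mu})$, since for every transposition the supertwist sign $(-1)^{|v_i||v_{i+1}|}$ equals $-1$ precisely when both swapped entries are odd in $M$, equivalently both in $\underline{M_1}$, which is exactly the sign prescribed by $\rho^{\mathrm{sc}}$.

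Using the explicit formula \eqref{eq:Delta} for the super-$\Delta$ on the standardized basis $\{Z^{(\bs i)}\}$ together with the multiplicativity \eqref{eq:div_tensor}, the inclusion $\Delta_\Pi\colon \Gamma^{\lambda'/\mu'}_\Pi(M) \hookrightarrow M^{\otimes d}$ is identified, via \eqref{eq:alg2}, with the canonical inclusion of $(\Lambda \boxtimes D)^{\lambda'/\mu'}(\underline{M_0},\underline{M_1})$ into $(\underline{M_0}\oplus\underline{M_1})^{\otimes d}$. Likewise, the $d$-fold multiplication $m$ corresponds to the canonical projection onto $(\Sym \boxtimes \Lambda)^{\lambda/\mu}(\underline{M_0},\underline{M_1})$. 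Composing the three matched stages shows that $\hat{\theta}_{\lambda/\mu}(M)$ and $\theta^{\mathrm{sc}}_{\lambda/\mu}(\underline{M_0},\underline{M_1})$ agree under the natural identifications, so their images are canonically isomorphic, giving the first assertion.

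For the second assertion, specializing to $M = M_0$ (so $\underline{M_1}=0$) and to $M = M_1$ (so $\underline{M_0}=0$) in the first isomorphism and invoking \eqref{eq:SchurWeylComplex} yields
$$\widehat{S}_{\lambda/\mu}(M_0) \cong SC_{\lambda/\mu}(\underline{M_0},0) = S_{\lambda/\mu}(\underline{M_0}),\qquad
\widehat{S}_{\lambda/\mu}(M_1) \cong SC_{\lambda/\mu}(0,\underline{M_1}) = W_{\lambda/\mu}(\underline{M_1}).$$
The main obstacle is the sign bookkeeping in the inclusion step: the super-$\Delta$ assigns supertwist signs $\mathrm{sgn}_\Zc(\bs i;\sigma)$ based on the parity of $\Pi M$, under which elements of $M_0$ become odd, whereas the Schur-complex inclusion uses the classical alternating signs for $\Lambda$ applied to $\underline{M_0}$ together with trivial symmetrization for $D$ applied to $\underline{M_1}$; reconciling these requires a careful term-by-term comparison using \eqref{eq:alg2} and the tensor-swap, verifying that after subsequent application of $\rho^{\mathrm{sc}}(\sigma_{\lambda/\mu})$ and the projection, the two constructions have canonically isomorphic images.
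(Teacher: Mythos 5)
Your proposal takes essentially the same route as the paper, whose proof is a one-line reduction to "compare the definitions of $\theta^{\mathrm{sc}}_{\lambda/\mu}$ and $\hat{\theta}_{\lambda/\mu}$" followed by the specialization via (\ref{eq:SchurWeylComplex}); you simply unpack the comparison into its three stages using (\ref{eq:alg1}), (\ref{eq:alg2}) and the matching of $\rho^{\mathrm{sc}}$ with the supertwist action. The sign bookkeeping you flag at the end is not carried out in the paper's proof either, so your treatment sits at the same level of detail while being more explicit about what the "comparison" actually consists of (and correctly noting the $(D\boxtimes\Lambda)$ versus $(\Lambda\boxtimes D)$ slip in the paper's definition of $\theta^{\mathrm{sc}}_{\lambda/\mu}$).
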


\begin{proof}
The first statement follows by comparing the definitions of the natural transformations 
$\theta^\mathsf{sc}_{\lambda/\mu}$ and $\hat{\theta}_{\lambda/\mu}$.  The 
second part then follows from the isomorphisms given 
in (\ref{eq:SchurWeylComplex}) above.
\end{proof}
\smallskip

%ss
\subsection{The Standard Basis Theorem}\label{ss:standard}

Let us introduce the relevant tableaux with which we shall need to work.  
Let $\Zc$ be a $\Z_2$-graded set $\Zc= \Zc_0 \sqcup \Zc_1$.  Assume $\Zc$ has a total order, $<$.
A {\em tableaux}, $\mathfrak t$, of shape $\lambda/\mu$ with values in $\Zc$ is a function 
$\mathfrak t: \triangle_{\lambda/\mu} \to \Zc$. Let $\text{Tab}_\Zc({\lambda/\mu})$ denote the set of all
$\Zc$-valued  tableaux.

\begin{definition}
Suppose $\mathfrak t\in \text{Tab}_{\Zc}({\lambda/\mu})$. 
\begin{itemize}
\item[1.] The tableau $\mathfrak{t}$ is called {\em row standard} (resp.~{\em row costandard}) 
if the entries in each row are 
nondecreasing and repeated entries, if any, occur only among values in $\Zc_0$ (resp.~$\Zc_1$).  

\item[2.] It is called {\em column standard} (resp.~{\em column costandard}) 
if the columns are nondecreasing and any repeated entries occur among the $\Zc_1$
(resp.~$\Zc_0$).

\item[3.] The tableau $\mathfrak{t}$ is called {\em standard} if it is both row standard and 
column standard,
and it is called {\em costandard} if it is both row costandard and column costandard.  

\end{itemize}
\end{definition}
\smallskip

Suppose $M\in \svec$, and let $\Xc=(X_1,\dots, X_m)$ (resp. $\Yc=(Y_1, \dots, Y_n)$)
be an ordered basis of $M_0$ (resp. $M_1$).  Then we fix some total order on the
union $\Zc= \Xc\sqcup \Yc =(Z_1, \dots, Z_{m+n})$ 
which preserves the respective orderings in $\Xc$ and $\Yc$.  
This gives a $\Z_2$-graded set with $\Zc_0=\Xc$, $\Zc_1=\Yc$. 
Now, since we are dealing the Schur superfunctors, we will also need
to consider the vector superspace $\Pi M$ with basis $\Zc_\pi= \{\Pi Z_1, \dots, \Pi Z_{m+n}\}$,
which is again totally ordered by the indices.  
Then $\Zc_\pi$ is  a  $\Z_2$-graded set with $(\Zc_\pi)_0 = \{\Pi Y_1, \dots, \Pi Y_n\}$ and
$(\Zc_\pi)_1 = \{\Pi X_1, \dots, \Pi X_m\}$. 
\smallskip

In working with tableaux, it will be more convenient to deal with an 
indexing set in place of the basis $\Zc$. 
Let us consider the set $[m+n]= [m+n]_\Zc = \{1, \dots, m+n\}$ as a $\Z_2$-graded
set with 
$[m+n]_\varepsilon=  \{1\leq k \leq m: |Z_k| =\varepsilon\}$,  for $\varepsilon \in \Z_2$, 
and with the usual total order on integers.  
It is clear that  $\text{Tab}_{[m+n]}(\lambda/\mu)$ is in bijection with $\text{Tab}_{\Zc}(\lambda/\mu)$,
and this bijection preserves row (co)standardness and  column (co)standardness of tableaux.

For any $\mathfrak t \in \text{Tab}_{[m+n]}(\lambda/\mu)$, let $w(\mathfrak t) \in I(m|n,d)$ denote
the {\em reading word} of $\mathfrak t$ obtained by reading entries 
from left to right and top to bottom.   I.e., $w(\mathfrak t) = (w_1,w_2, \dots, w_d)$ with 
$w_1 = \mathfrak t(1,\mu_1+1)$, $w_2 = \mathfrak t(1,\mu_1+2)$, \dots, 
$w_d = \mathfrak t(q, \lambda_q)$, where $q = l(\lambda)$. 
Note that this gives a bijection 
$$w(\ ): \text{Tab}_{[m+n]}(\lambda/\mu) \stackrel{\sim}{\rightarrow} I(m|n,d)$$
if $d = |\lambda/\mu|$.

Now fix a tableau $\mathfrak t\in \text{Tab}_{[m+n]}(\lambda/\mu)$.  
Recalling the notation of Section ~\ref{ss:divided}, we introduce the following notation for 
 elements
$$  Z^{(\mathfrak t^i)}= Z^{(w(\mathfrak t^i))}  \in \Gamma^{\lambda_i-\mu_i}M\quad \text{ and } \quad
Z^{\mathfrak t^i}  = Z^{w(\mathfrak t^i)} \in M^{\otimes \lambda_i-\mu_i}.$$
If $q= l(\lambda)$ and $d= |\lambda/\mu|$, then we further write %define the elements
$$  Z^{(\mathfrak t)} =  Z^{(\mathfrak t^1)}\otimes \dots \otimes 
 Z^{(\mathfrak t^q)}\hp \in\hpp \Gamma^{\lambda/\mu} \quad \text{ and }
 Z^{\mathfrak t} = Z^{\mathfrak t^1}\otimes \dots \otimes Z^{\mathfrak t^q}\hp \in\hpp M^{\otimes d}.
$$
Now we may also use tableaux in $\text{Tab}_{[m+n]}(\lambda/\mu)$ to parametrize elements of
$\Gamma^{\lambda/\mu}_\Pi M$ and $(\Pi M)^{\otimes d}$.  I.e., we set
$$ Z_{\Pi}^{(\mathfrak t^i)}= (\Pi Z)^{(w(\mathfrak t^i))}  \in \Gamma_\Pi^{\lambda_i-\mu_i}M \quad \text{ and }
\quad Z_\Pi^{\mathfrak t^i}  = (\Pi Z)^{w(\mathfrak t^i)} \in (\Pi M)^{\otimes \lambda_i-\mu_i}, 
$$
and we further write 
$$
 Z_\Pi^{\mathfrak t} = Z_\Pi^{\mathfrak t^1}\otimes \dots \otimes Z_\Pi^{\mathfrak t^q}\hp \in\hpp (\Pi M)^{\otimes d}, \qquad
 {} Z_\Pi^{(\mathfrak t)} =  Z_\Pi^{(\mathfrak t^1)}\otimes \dots \otimes 
 Z_\Pi^{(\mathfrak t^q)}\hp \in\hpp \Gamma_\Pi^{\lambda/\mu}.
$$

It follows from Section \ref{ss:divided} that the set of $Z^{(\mathfrak t)}$ such that 
$\mathfrak t\in \text{Tab}_{[m+n]}(\lambda/\mu)$ is row standard (resp.~row costandard)
forms a basis of $\Gamma^{\lambda/\mu}M$ (resp.~$\Gamma_\Pi^{\lambda/\mu}M$).
Also, it is clear that the sets
$\{Z^{\mathfrak t}\}$ and $\{Z_\Pi^{\mathfrak t}\}$, such that $\mathfrak t\in \text{Tab}_{[m+n]}(\lambda/\mu)$,  
give  bases for $M^{\otimes d}$  and $(\Pi M)^{\otimes d}$, respectively.
\smallskip

\begin{definition}%[Tableaux quasi-order]
Let $\Zc$ be a $\Z_2$-graded set with a total order, $z_1< \cdots < z_r$, of its elements.  
Suppose $\mathfrak{t}\in \text{Tab}_\Zc({\lambda/\mu})$, and let $p,q$ be positive integers. 
Define $\mathfrak{t}_{p,q}$ to be the number of times the elements $z_1, \dots, z_q$ appear as 
entries in the first $p$ rows of $\mathfrak{t}$. More precisely, $\mathfrak{t}_{p,q}$ is equal to the 
cardinality of  $\{ (i,j)\in \triangle_{\lambda/\mu}:\ 1\leq i \leq p \text{ and } \mathfrak{t}(i,j)\in 
\{z_1, \dots, z_q\}\}$. If $\mathfrak{s}$ is another 
tableau, we say $\mathfrak{s}\trianglelefteq \mathfrak{t}$ if $\mathfrak{s}_{p,q}\geq \mathfrak{t}_{p,q}$ 
for all $p,q$.  We say $\mathfrak{s}\triangleleft\mathfrak{t}$ if $\mathfrak{s}\trianglelefteq \mathfrak{t}$ and 
$\mathfrak{s}_{p,q} > \mathfrak{t}_{p,q}$ for at least one pair $p,q$.  
\end{definition}

\begin{remark}\label{rmk:quasi-order}
The above puts a quasi-order on the set $\text{Tab}_{\Zc}(\lambda/\mu)$:   
$\trianglelefteq$ is reflexive and transitive, 
but we may have $\mathfrak{t}\trianglelefteq \mathfrak{s}$ and $\mathfrak{s}\trianglelefteq \mathfrak{t}$ with 
$\mathfrak{s}\neq \mathfrak{t}$.  We do however obtain a 
genuine partial order by restricting either to the set of all row (co)standard tableaux, 
or to the set of all column (co)standard tableaux, of shape $\lambda/\mu$.
\end{remark}

Notice that the definition of the quasi-order above depends only on the total order, $<$, of $\Zc$
and not on its $\Z_2$-grading.  
The statement and proof of the following lemma are thus essentially the 
same as given in Lemma II.2.14 of \cite{ABW}.

\begin{lemma}\label{lem:exchange}
Let $\mathfrak{t}\in \text{Tab}_{\Zc}(\lambda/\mu)$, and let $\mathfrak{s}$ 
be the tableau, also of shape $\lambda/\mu$, formed by exchanging certain entries from the 
$k$-th row of $\mathfrak{t}$, say $\mathfrak{t}(k,h_1),$ $\dots,$ $\mathfrak{t}(k,h_\alpha)$, with
certain entries from the $(k+1)$-st row of $\mathfrak{t}$, say $\mathfrak{t}(k+1,l_1),$ $\dots,$
$\mathfrak{t}(k+1,l_\alpha)$, such that
$\mathfrak{t}(k+1,l_\nu) < \mathfrak{t}(k,h_\nu)$, for $\nu=1, \dots, \alpha$.
More precisely, for 
$(i,j)\notin \{(k,h_1), \dots, (k,h_\alpha), (k+1,l_1), \dots (k+1, l_\alpha)\},$ 
$\mathfrak{s}(i,j)= \mathfrak{t}(i,j)$, but  $\mathfrak{s}(k,h_\nu) = \mathfrak{t}(k+1, l_\nu)$ and 
$\mathfrak{s}(k+1, l_\nu)= \mathfrak{t}(k,h_\nu)$, for $\nu = 1, \dots, \alpha$.  Then 
$\mathfrak{s}\triangleleft\mathfrak{t}$.
\end{lemma}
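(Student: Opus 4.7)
The plan is to verify the two inequalities defining $\mathfrak s \triangleleft \mathfrak t$ directly from the definition of the quasi-order, splitting the analysis according to which rows are affected by the exchange. Only row $k$ and row $k+1$ differ between $\mathfrak s$ and $\mathfrak t$, so the key is to compare $\mathfrak s_{p,q}$ and $\mathfrak t_{p,q}$ when $p$ straddles those rows.

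First I would observe that if $p < k$, then neither row $k$ nor row $k+1$ contributes to $\mathfrak t_{p,q}$ or $\mathfrak s_{p,q}$, so the two are equal. If $p \ge k+1$, then both modified rows lie entirely inside the first $p$ rows, and the exchange merely moves entries between them without altering which multiset of symbols appears in rows $1,\dots,p$; hence $\mathfrak s_{p,q} = \mathfrak t_{p,q}$ as well. This reduces the whole comparison to the single case $p=k$.

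Next, for $p = k$, the only change is that for each $\nu = 1,\dots,\alpha$ the entry $\mathfrak t(k,h_\nu)$ in row $k$ is replaced by the strictly smaller value $\mathfrak t(k+1,l_\nu) = \mathfrak s(k,h_\nu)$. Because $\mathfrak t(k+1,l_\nu) < \mathfrak t(k,h_\nu)$, one has the implication $\mathfrak t(k,h_\nu) \in \{z_1,\dots,z_q\} \Rightarrow \mathfrak t(k+1,l_\nu) \in \{z_1,\dots,z_q\}$ for every $q$, which means no exchanged entry can be removed from the count $\mathfrak s_{k,q}$ while a new one is added, and the converse direction may genuinely increase the count. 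Summing the contributions of the $\alpha$ exchanges yields $\mathfrak s_{k,q} \ge \mathfrak t_{k,q}$ for every $q$, and combining all three regimes proves $\mathfrak s \trianglelefteq \mathfrak t$.

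To upgrade the inequality to $\mathfrak s \triangleleft \mathfrak t$, I would choose the index $q$ so that $z_q$ equals the smallest value among $\mathfrak t(k+1,l_1),\dots,\mathfrak t(k+1,l_\alpha)$. For this $q$, the corresponding $\mathfrak t(k+1,l_\nu)$ lies in $\{z_1,\dots,z_q\}$ while the larger entry $\mathfrak t(k,h_\nu)$ does not, so passing from $\mathfrak t$ to $\mathfrak s$ strictly increases the count at $(k,q)$, giving $\mathfrak s_{k,q} > \mathfrak t_{k,q}$. The argument is essentially combinatorial bookkeeping, so no real obstacle is expected; the only subtlety is choosing a specific $q$ that certifies strict inequality, which the extremal value above supplies.
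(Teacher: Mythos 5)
Your argument is correct and is the standard combinatorial bookkeeping that the paper delegates to Lemma~II.2.14 of \cite{ABW}; the reduction to $p=k$ and the observation that each exchanged entry is replaced by a strictly smaller one yields the inequality, and your choice of $z_q$ among the incoming values certifies strictness. This matches the cited source's approach.
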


Now suppose $\alpha^1, \dots, \alpha^r \in \Z_+^\infty$, and let $\alpha = \sum \alpha^i$,
so that $\alpha_j= \alpha^1_j + \cdots + \alpha^r_j$ for $j=1, \dots, r$.  Then there is a
canonical even morphism
\begin{equation}\label{eq: } 
S^{\alpha^1} \otimes S^{\alpha^2}\otimes \cdots \otimes S^{\alpha^r} \twoheadrightarrow S^\alpha,
\end{equation}
which is given by first applying iterations of the super twists followed by iterations of the multiplication.

\begin{lemma}\label{lem:factor}
The natural transformation $\hat{\theta}_{\lambda'/\mu'}$ can be factored as follows:
\begin{align*}
&\Gamma^{\lambda_1-\mu_1}_\Pi \otimes \cdots \otimes \Gamma^{\lambda_i-\mu_i}_\Pi \otimes 
\Gamma^{\lambda_{i+1}-\mu_{i+1}}_\Pi \otimes \cdots \otimes \Gamma^{\lambda_q-\mu_q}_\Pi\ 
 \\[.18cm]%
&\qquad\ \stackrel{\alpha}{\longrightarrow}\ \Gamma^{\lambda_1-\mu_1}_\Pi \otimes \cdots \otimes\ S^{(\lambda_i,\lambda_{i+1})'/(\mu_i,\mu_{i+1})'} \otimes \cdots \otimes \Gamma^{\lambda_q-\mu_q}_\Pi\\[.18cm]%
&\qquad\ \stackrel{\beta}{\longrightarrow}\ S^{(\lambda_1,\dots,\lambda_{i-1})'/(\mu_1,\dots,\mu_{i-1})'} \otimes\ 
S^{(\lambda_i,\lambda_{i+1})'/(\mu_i,\mu_{i+1})'}%\\[.18cm]& \hspace{3.14cm}
\otimes\ S^{(\lambda_{i+2},\dots,\lambda_q)'/(\mu_{i+2},\dots,\mu_q)'}\\[.16cm]% 
&\qquad\ \stackrel{\gamma}{\longrightarrow}\ S^{\lambda'/\mu'},
\end{align*}
where $\alpha$ is the map 
$1\otimes \cdots \otimes \hat{\theta}_{(\lambda_i,\lambda_{i+1})/(\mu_i,\mu_{i+1})}\otimes\cdots \otimes 1$, 
$\beta$ is the map 
$\hat{\theta}_{(\lambda_1, \dots, \lambda_{i-1})/(\mu_1,\dots,\mu_{i-1})} \otimes 1\otimes \hat{\theta}_{(\lambda_{i+2}, \dots, \lambda_n)/(\mu_{i+2},\dots,\mu_n)}$, 
and $\gamma$ is the map 
$$S^{\alpha^1}\otimes S^{\alpha^2}\otimes S^{\alpha^3}\twoheadrightarrow S^{\lambda'/\mu'}$$ 
given by (\ref{eq: }) with 
$\alpha^1= (\lambda_1, \dots, \lambda_{i-1})'- (\mu_1, \dots, \mu_{i-1})'$, $\alpha^2= (\lambda_i,\lambda_{i+1})'-(\mu_i,\mu_{i+1})'$ 
and 
$\alpha^3= (\lambda_{i+2},\dots, \lambda_n)'-(\mu_{i+2},\dots, \mu_n)'$.
\end{lemma}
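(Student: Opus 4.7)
The plan is to verify the claimed factorization by unpacking both sides of the asserted identity into compositions of the three basic building blocks $\Delta_\Pi$, a place-permutation action of an element of $\Si_d$ on $I^{\otimes d}$, and the multiplication $m$ in the symmetric superalgebra, and then checking that the two compositions coincide. By the definition of $\hat{\theta}$, each of $\alpha$ and $\beta$ expands, on its non-trivial factors, into a triple composition of the form $\Delta_\Pi$ (on the relevant block of $\Gamma_\Pi$'s) followed by a block-level permutation followed by block-level multiplication; the map $\gamma$, on the other hand, contains no further $\Delta_\Pi$, only iterated multiplications in $S^\udot$ and supertwists required to align the three tensor blocks into $S^{\lambda'/\mu'}$.

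Next, I would use the coherence of the comultiplication on $\Gamma^\udot$, i.e.\ the exponential property (\ref{eq:exponential}) together with (\ref{eq:div_tensor}), to combine the three independent block-level $\Delta_\Pi$'s appearing in $\alpha$ and $\beta$ into the single global map $\Delta_\Pi : \Gamma_\Pi^{\lambda/\mu} \hookrightarrow I^{\otimes d}$. Dually, the associativity of multiplication and the (super)commutativity of $S^\udot$ allow one to collapse the block-level multiplications in $\alpha$ and $\beta$ together with the iterated multiplication and supertwists in $\gamma$ into the single global map $m : I^{\otimes d} \twoheadrightarrow S^{\lambda'/\mu'}$. Everything that is left in the middle is a single element $\tau \in \Si_d$ acting by place permutation on $I^{\otimes d}$, obtained by concatenating the three block-level permutations with the inter-block supertwist permutations supplied by $\gamma$. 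The claim then reduces to the purely combinatorial identity $\tau = \sigma_{\lambda'/\mu'}$ in $\Si_d$.

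This identity amounts to the observation that reading the cells of the skew diagram $\triangle_{\lambda/\mu}$ column-by-column (which is what $\sigma_{\lambda'/\mu'}$ encodes) can be performed in the following equivalent way: for each column, list first the cells in rows $1,\dots,i{-}1$, then those in rows $i,i{+}1$, then those in rows $i{+}2,\dots,q$; this is exactly the order produced by performing the three block-level column-readings separately and then merging them via the inter-block rearrangement encoded by $\gamma$. Writing out the two orderings of $\{1,\dots,d\}$ explicitly shows they agree as permutations of the row-reading index.

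The main obstacle is the bookkeeping of signs. The map $\Delta_\Pi$ introduces a parity shift whenever a divided-power block lands in odd degree, the place-permutation action of $\Si_d$ carries the sign $\text{sgn}_\Zc(\bs i;\sigma)$ described in (\ref{eq:sgn}), and the supertwists in $\gamma$ carry yet another sign from the rule of signs (\ref{eq:signs}). One verifies that these signs combine coherently by using the cocycle-like multiplicativity
\[
\text{sgn}_\Zc(\bs i;\sigma\sigma') \;=\; \text{sgn}_\Zc(\bs i;\sigma)\,\text{sgn}_\Zc(\bs i.\sigma;\sigma'),
\]
which allows the signs accumulated at each of the stages $\alpha,\beta,\gamma$ to be assembled into the single sign $\text{sgn}_\Zc(-;\sigma_{\lambda'/\mu'})$ appearing in $\hat{\theta}_{\lambda'/\mu'}$. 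Because both sides are $\Svec$-enriched natural transformations between the same functors, it suffices to perform this check on a basis element $Z_\Pi^{(\mathfrak t)}$ for $\mathfrak t \in \text{Tab}_{[m+n]}(\lambda/\mu)$, and the sign coherence is then a finite verification using Lemma \ref{lem:multiply}.
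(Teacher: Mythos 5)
The paper states this lemma without proof, treating the factorization as a routine consequence of the Hopf-algebraic and combinatorial definitions, so there is no ``official'' argument to compare against.

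Your plan is the natural one and I believe it is correct. Expanding $\alpha$ and $\beta$ into their constituent $\Delta_\Pi \to (\text{permutation}) \to m$ stages, commuting the block-level $\Delta_\Pi$'s past the block-level $m$'s (possible because they act on disjoint tensor factors), and using coassociativity together with the commutativity of $S^{\udot}$ to rewrite the chain of partial multiplications and supertwists in $\gamma\beta\alpha$ as the global $m$ followed by a single place-permutation, does collapse the whole composite into a map of the form $m\circ\tau\circ\Delta_\Pi$. The reduction to the permutation identity $\tau=\sigma_{\lambda'/\mu'}$ is therefore valid, and the geometric description you give (read each column of $\triangle_{\lambda/\mu}$ first through rows $1,\dots,i-1$, then rows $i,i+1$, then rows $i+2,\dots,q$, and observe this is what the block permutations followed by the interleaving from $\gamma$ produce) is the right picture. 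Likewise the cocycle multiplicativity of $\text{sgn}_\Zc$ is exactly what makes the parity signs coherent.

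The one place where the argument is more assertion than proof is the sentence ``Writing out the two orderings of $\{1,\dots,d\}$ explicitly shows they agree.'' That step is the actual content of the lemma, and it is not quite as immediate as the phrasing suggests: you must show that the product, in $\Si_d$, of the three internal $\sigma_{(\cdot)'/(\cdot)'}$ permutations (each acting on its own contiguous block of positions) and the supertwist permutation implicit in the map (\ref{eq: }), equals $\sigma_{\lambda'/\mu'}$. A clean way to discharge this would be an induction on the number of row-blocks using the explicit defining formula for $\sigma_{\lambda/\mu}$, or by checking that both permutations send the cell of row-reading position $r$ in $\triangle_{\lambda/\mu}$ to the same column-reading position; either would convert the hand-waving into a rigorous finite computation. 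Finally, you are right that checking the resulting identity of $\Svec$-enriched natural transformations on a single $\k^{m|n}$ with $m,n$ large suffices, but this relies on the equivalence $\Pol^\I_d\simeq S(m|n,d)\hpp\smod$ for $m,n\geq d$, which is worth citing explicitly rather than leaving implicit.
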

%\smallskip

We wish to define a natural transformation, $\diamondsuit_{\lambda/\mu}$, whose image lies in $\Gamma^{\lambda/\mu}_\Pi$ 
and such that 
$$\widehat{S}_{\lambda'/\mu'}\ \simeq\ \Gamma^{\lambda/\mu}_\Pi\hmm \left/\hpp \text{Im} \diamondsuit_{\lambda/\mu}. \right.$$
By the above lemma, we may reduce to the case where $\lambda=(\lambda_1,\lambda_2)$ and 
$\mu=(\mu_1,\mu_2)$.  Now if $\mu_1\geq \lambda_2$, then $S^{\lambda'/\mu'} \cong M^{\otimes d}$,
where $d=\lambda_1+\lambda_2-\mu_1-\mu_2$.
In this case, $\hat{\theta}_{\lambda/\mu}: \Gamma^{\lambda/\mu}_\Pi \to M^{\otimes d}$ is just the canonical 
embedding, % given by tensoring the diagonalization maps, 
and we have an isomorphism 
$\widehat{S}_{\lambda/\mu} \cong \Gamma^{\lambda/\mu}_\Pi$.  
%\smallskip
Hence, we may further reduce to the case where $\mu=(\mu_1,\mu_2)$ 
$\subset (\lambda_1,\lambda_2)=\lambda$, 
with $\mu_1<\lambda_2$.

\begin{definition}\label{lem:image2}
Let $M\in \svec$, and suppose $\mu \subset \lambda$
are arbitrary partitions with $l(\lambda)=q$.  Suppose also
that $1\leq i \leq q-1$, and let $u,v$ be nonnegative integers such that %$p_i\geq k$ for $i=1,2$ and 
$u+ v < \lambda_{i+1}-\mu_i$. %and $u,v \in \Z_{\geq 0}$.  
Then we define a map $\diamondsuit_i(\lambda/\mu, u,v; M)$ as the composition
\begin{align*}
\big( \Gamma_\Pi^u \otimes \Gamma_\Pi^{p_1-u+p_2-v}\otimes\Gamma_\Pi^{v}\big)( M) 
 \stackrel{1\otimes \Delta \otimes 1}{\relbar\joinrel\relbar\joinrel\relbar\joinrel\relbar\joinrel\to}&\ 
 \big(\Gamma_\Pi^u \otimes \Gamma_\Pi^{p_1-u} \otimes \Gamma_\Pi^{p_2-v} 
 \otimes \Gamma_\Pi^{v}\big)(M)\\[.1cm]
\stackrel{m\otimes m}{\relbar\joinrel\relbar\joinrel\relbar\joinrel\relbar\joinrel\to}&\ 
\big(\Gamma_\Pi^{p_1}\otimes\Gamma_\Pi^{p_2}\big)( M),
\end{align*}
where $p_1 = \lambda_i-\mu_i$, $p_2= \lambda_{i+1}-\mu_{i+1}$ and  
$\Delta$ is the appropriate component of the comultiplication 
in $\Gamma^{\udot} (\Pi M)$, while $m$ denotes multiplication.
We then define the map ${\diamondsuit}_{\lambda/\mu}(M)$ to
be the sum of all these maps, i.e., 
$${\diamondsuit}_{\lambda/\mu}(M)\ =\ 
\sum_{i=1}^{q-1}\hpp \sum_{u,v}\hpp 
1_1 \otimes \cdots \otimes 1_{i-1}\otimes {\diamondsuit}_i(\lambda/\mu, u,v; M) 
\otimes 1_{i+2}\cdots \otimes 1_n,$$
where the inner sum is over pairs $u,v$ such that $0 \leq u+v < \lambda_{i+1}-\mu_i$. \vspace{.03cm}
This defines an $\Svec$- enriched natural transformation, $\diamondsuit_{\lambda/\mu}$, whose image lies
in $\Gamma^{\lambda/\mu}_\Pi$.
\end{definition}

Suppose now that $M\in \svec$ with ordered basis $\Zc = \Zc_0 \sqcup \Zc_1 = ( Z_1, \dots, Z_{m+n})$ as above.
We have the following.

%lem
\begin{lemma}[Straightening Law]\label{lem:straighten}
Suppose that $\mathfrak t \in \text{Tab}_{\Zc}(\lambda/\mu)$ is row costandard, but not costandard.
Then there are row costandard tableaux $\mathfrak t_l \in \text{Tab}_{\Zc}(\lambda/\mu)$ such
that $\mathfrak t_l \neq \mathfrak t$ with $\mathfrak t_l \triangleleft \mathfrak t$ and 
$Z_\Pi^{(\mathfrak t)} + \sum_l C_l Z_\Pi^{(\mathfrak t_l)}\in \text{Im}\hpp\hpp\diamondsuit_{\lambda/\mu}(M)$
for some integers $C_l$.
\end{lemma}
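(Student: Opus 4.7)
The plan is to adapt the straightening argument of \cite[Prop.~II.2.16]{ABW} to the super setting. Since $\diamondsuit_{\lambda/\mu}$ is a sum over adjacent-row operators, I first localize to a single pair of adjacent rows $i, i+1$ where column costandardness of $\mathfrak{t}$ fails. Such a pair must exist because $\mathfrak{t}$ is row costandard but not costandard, so some column either strictly decreases from row $i$ to row $i+1$ or repeats an element of $\Zc_1$ there.

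Having identified this failing pair, I choose nonnegative integers $u, v$ with $u+v < \lambda_{i+1} - \mu_i$ so that the middle piece $\Gamma^{p_1 - u + p_2 - v}_\Pi$ in Definition \ref{lem:image2} captures precisely the offending portion of the two rows; concretely, take $u$ to be the number of entries in row $i$ strictly before the violating column and $v$ symmetrically for row $i+1$. I then evaluate $\diamondsuit_i(\lambda/\mu, u, v; M)$ on the input
$$Z_\Pi^{(\bs a)}\otimes \bigl(Z_\Pi^{(\bs b^{(i)})} \coasterisk Z_\Pi^{(\bs b^{(i+1)})}\bigr)\otimes Z_\Pi^{(\bs c)},$$
where $\bs a, \bs b^{(i)}$ are the initial and terminal segments of row $i$ of $\mathfrak{t}$, and $\bs b^{(i+1)}, \bs c$ are the corresponding segments of row $i+1$. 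Expanding the middle $1 \otimes \Delta \otimes 1$ by Lemma \ref{lem:multiply}(i) as a signed sum over standardized splittings of the merged middle sequence, and then computing the subsequent $m \otimes m$ via Lemma \ref{lem:multiply}(ii), produces a signed sum in which the identity splitting reproduces a nonzero integer multiple of $Z_\Pi^{(\mathfrak{t})}$---with coefficient $\pm 1$, because the row costandardness of $\mathfrak{t}$ forces the constant $C$ in Lemma \ref{lem:multiply}(ii) to equal $1$ for the merges that rebuild its rows---while every other splitting yields $\pm Z_\Pi^{(\mathfrak{s})}$ for a row costandard $\mathfrak{s}$ obtained by swapping a nonempty subset of entries between rows $i$ and $i+1$. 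By Lemma \ref{lem:exchange}, each such $\mathfrak{s}$ satisfies $\mathfrak{s} \triangleleft \mathfrak{t}$. Solving for $Z_\Pi^{(\mathfrak{t})}$ yields the claimed congruence modulo $\mathrm{Im}\,\diamondsuit_{\lambda/\mu}(M)$.

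The main obstacle compared to the classical ABW case is the sign bookkeeping introduced by the parity-shift $\Pi$: the signs $\mathrm{sgn}_{\Zc_\pi}(\cdot;\cdot)$ from \eqref{eq:Delta} appear both in the expansion of $\Delta$ and in the subsequent multiplication in $\Gamma^\udot(\Pi M)$, and one must verify that they combine so that (i) the leading coefficient of $Z_\Pi^{(\mathfrak{t})}$ is genuinely $\pm 1$ rather than zero, and (ii) each correction term can be rewritten, up to a well-defined sign, as $Z_\Pi^{(\mathfrak{s})}$ for a row costandard $\mathfrak{s}$. This is carried out by tracking how the relevant transpositions act on the graded basis $\Zc_\pi$ of $\Pi M$, using the identity $Z^{(\bs i)} = \mathrm{sgn}_\Zc(\bs j; \sigma) Z^{(\bs j)}$ from Section \ref{ss:divided} to rewrite each term with a standardized index. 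Once this sign analysis is in place, the remaining combinatorics parallels the classical straightening argument.
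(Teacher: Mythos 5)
Your overall plan correctly mirrors the paper's proof (reduce to two adjacent rows via locality of $\diamondsuit_{\lambda/\mu}$, pick $u,v$ at the first violating column $j_0$, feed a specific element into $\diamondsuit_i$, expand via Lemma \ref{lem:multiply}, and invoke Lemma \ref{lem:exchange} to show correction terms are $\triangleleft\,\mathfrak t$). However, there is a genuine gap in the choice of input. You take $u = j_0-\mu_1-1$, $v=\lambda_2-j_0$ naively and build the middle factor by multiplying the terminal segment of row $i$ by the initial segment of row $i+1$. The paper instead introduces a crucial extra parameter: $r$ is defined as the length of the run of repeated entries $h_{p_2-v}=\cdots=h_{p_2-v+r}$ in row $i+1$ beginning at column $j_0$, and the partition of row $i+1$ is then made at position $j_0+r$ rather than $j_0$ (equivalently, $v$ is shifted to $v-r$). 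This shift absorbs all of the repeated entries into the middle sequence $\bs b$ and keeps $\bs c$ disjoint from it.

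Without this $r$-adjustment, your claim that "the row costandardness of $\mathfrak t$ forces the constant $C$ in Lemma \ref{lem:multiply}(ii) to equal $1$" is simply false. If $\mathfrak t(i+1,j_0)=\mathfrak t(i+1,j_0+1)$ lies in $\Zc_1$ (a legitimate configuration for a row costandard tableau, since repeats among $\Zc_1$ are allowed), then $\text{Im}(\bs b^{(i+1)})\cap\text{Im}(\bs c)\neq\emptyset$, so the merge $Z_\Pi^{(\bs b^{(i+1)})}\coasterisk Z_\Pi^{(\bs c)}$ produces a multiplicity constant $C>1$. In positive characteristic this constant can vanish in $\k$, so the leading term can disappear entirely and the argument collapses. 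The paper establishes $C=1$ via two separate disjointness checks: $g_u<g_{u+1}$ (a consequence of the minimality of $j_0$) for the left merge, and $h_{p_2-v+r}<h_{p_2-v+r+1}$ (built into the definition of $r$) for the right merge; the second is not available to you. You also do not verify that your merged middle sequence is $\Zc_\pi$-restricted (i.e., that $Z_\Pi^{(\bs b^{(i)})}\coasterisk Z_\Pi^{(\bs b^{(i+1)})}\neq 0$), nor that the "identity splitting" is the unique pair $(\bs b^1,\bs b^2)$ that reconstructs $\mathfrak t$ — both points the paper addresses explicitly. The high-level strategy is right, but the careful bookkeeping that makes the coefficient exactly $\pm 1$ is the heart of the lemma and is not carried out.
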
 

\begin{proof}
Suppose first that $l(\lambda)=2$. Then there is
a column $j$ such that either: 
$$\mathfrak t(1,j) > \mathfrak t(2,j),\quad \text{ or }\quad
  \mathfrak t(1,j), \mathfrak t(2,j) \in [m+n]_1 
\text{ and } \mathfrak t(1,j)= \mathfrak t(2,j).$$  
Let $j_0$ denote the minimal such $j$ with this property.
  %\smallskip

Now set $u= j_0-\mu_1-1$ and $v= \lambda_2-j_0$. Let us also write \vspace{.03cm}%
$p_i= \lambda_i-\mu_i$ for $i=1,2$. \vspace{0.03cm}
If we let $g_1, \dots, g_{p_1}$ (resp.~$h_1, \dots, h_{p_2}$) denote the entries in 
the first (resp.~second) row, then the tableau $\mathfrak t$ has the form
$$\xy <.55cm,0cm>:
(2.25,2)*=0{}; (24,2)*=0{} **@{-} ,
(.5,1)*=0{}; (24,1)*=0{} **@{-} ,
(.5,0)*=0{}; (22.5,0)*=0{} **@{-} ,
(.5,1)*=0{}; (.5,0)*={} **@{-},
(2.25,2)*=0{}; (2.25,1)*={} **@{-},
(3.5,2)*=0{}; (3.5,1)*={} **@{-}, 
(6.25,2)*=0{}; (6.25,1)*={} **@{-}, 
(7.5,2)*=0{}; (7.5,0)*={} **@{-}, 
(12.5,2)*=0{}; (12.5,0)*={} **@{-}, 
(15.25,1)*=0{}; (15.25,0)*={} **@{-}, 
(17.5,1)*=0{}; (17.5,0)*={} **@{-}, 
(20.5,1)*=0{}; (20.5,0)*={} **@{-}, 
(22.5,1)*=0{}; (22.55,0)*={} **@{-}, 
(22,2)*=0{}; (22,1)*={} **@{-},
(24,2)*=0{}; (24,1)*={} **@{-},
(1.2,0.5)*=0{h_1},
(4.5,0.5)*=0{\cdots},
(1.75,1)*=0{}; (1.75,0)*={} **@{-},
(2.9,1.5)*=0{g_1},
(5,1.5)*=0{\cdots},
(6.9,1.5)*=0{g_{u}},
(10,1.5)*=0{g_{u+1}=g_{j_0-\mu_1}},
(17.5,1.5)*=0{\cdots},
(10,0.5)*=0{h_{j_0-\mu_2}\hmm = h_{p_2-v}},
(14,0.5)*=0{h_{p_2-v+1}},
(16.25,0.5)*=0{\cdots},
(19,0.5)*=0{h_{p_2-v+r}},
(21.5,0.5)*=0{\cdots},
(23,1.5)*=0{g_{p_1}}
\endxy$$
where $r$ is the largest nonnegative integer such that: 
$h_{p_2-v} = \cdots = h_{p_2-v+r}$. 
Recall from Section \ref{ss:divided} that 
$\Gamma_\Pi^u(M) \otimes \Gamma_\Pi^{p_1-u+p_2-v+r}(M)\otimes\Gamma_\Pi^{v-r}( M)$
has a basis of elements of the form
$$Z_\Pi^{(\bs a,\bs b,\bs c)}\hpp =\hpp\hpp Z_\Pi^{(\bs a)}\otimes Z_\Pi^{(\bs b)}\otimes Z_\Pi^{(\bs c)},$$ 
for standardized sequences
$\bs a, \bs b, \bs c \in I(m|n)$ \vspace{.05cm}%
such that 
$l(\bs a) = u$, $l(\bs b) = p_1 +p_2-u-v+r$, and $l(\bs c) = v-r$.  
Let us fix 
the sequences $\bs a, \bs b, \bs c$ determined by setting
$$a_j =  \mathfrak t(1,j),\quad  \mu_1+1\leq j < j_0;\qquad  c_j = \mathfrak t(2,j),\quad 
j_0+r+1\leq \lambda_2 < j_0;\quad \text{and}$$
$$b_j = \begin{cases}
  \mathfrak t(1,j),&  j_0\leq j \leq \lambda_1\\ 
\mathfrak t(2,j), & \mu_2+1\leq j < j_0+r.
\end{cases}
$$ 
The fact that $\bs a, \bs c$ are ${\Zc_\pi}$-restricted %\{0,1\}^m \times (\Z_{\geq 0})^n$ 
follows 
immediately from the fact that $\mathfrak t$ is row costandard.  On the other hand, 
one may check that $\bs b$ is $\Zc_\pi$-restricted by
using the row-constandardness of $\mathfrak t$ together with the fact that 
$h_{p_2-v-1}\leq g_u$, with $h_{p_2-v}= g_{u+1}$ only if both 
$g_u$ and $h_{p_2-v-1}$ belong to $[m+ n]_1$.

We now wish to show that
$\diamondsuit_1(\lambda/\mu, u,v ;M)(Z_\Pi^{(\bs a,\bs b, \bs c)})$ contains the 
basis element $Z_\Pi^{(\mathfrak t)}$ as a summand with coefficient $1$, and all other
nonzero summands correspond to row costandard tableaux occuring before 
$\mathfrak t$ in the order $\trianglelefteq$.
From Lemma \ref{lem:multiply}.(i), we have
$$
\diamondsuit_1(\lambda/\mu, u,v ;M)(Z_\Pi^{(\bs a,\bs b, \bs c)}) =\hpp
\sum_{\text{st}_{\Zc}(\bs b^1\vee\bs b^2)=\bs b} Z_\Pi^{(\bs a)} \coasterisk Z_\Pi^{(\bs b^1)} 
\otimes Z_\Pi^{(\bs b^2)} \coasterisk Z_\Pi^{(\bs c)}
$$
where the sum ranges over pairs of standardized sequences $\bs b^1$, $\bs b^2$.
To each  pair $\bs b^1, \bs b^2$, with $\text{st}_\Zc(\bs b^1\vee\bs b^2)= \bs b$, 
there corresponds a unique tableau $\mathfrak t({\bs b^1,\bs b^2})$ of shape $\lambda/\mu$ 
such that $w(\mathfrak t({\bs b^1,\bs b^2})) = \text{st}(\bs a \vee \bs b^1)\vee \text{st}(\bs b^2 \vee \bs c)$. 
Hence, from Lemma \ref{lem:multiply}.(ii) we have 
$$ Z_\Pi^{(\bs a)} \coasterisk Z_\Pi^{(\bs b^1)}  \otimes Z_\Pi^{(\bs b^2)} \coasterisk Z_\Pi^{(\bs c)} =\ 
C({\bs b^1,\bs b^2}) \  Z_\Pi^{(\bs a \vee \bs b^1)} \otimes Z_\Pi^{(\bs b^2\vee \bs c)}\hpp =\ 
\pm C({\bs b^1,\bs b^2}) \  Z_\Pi^{(\mathfrak t({\bs b^1, \bs b^2}))}
,$$ 
for some nonnegative integer $C({\bs b^1,\bs b^2})$.
If $\text{st}(\bs a \vee \bs b^1)$ and $\text{st}(\bs b^2 \vee \bs c)$ are both
$\Zc_\pi$-restricted, then $\mathfrak t({\bs b^1, \bs b^2})$ is row costandard and $C({\bs b^1,\bs b^2}) >0$.  
Otherwise, we have $Z_\Pi^{(\mathfrak t({\bs b^1, \bs b^2}))}\hmm =0$.
Now there is exactly one pair $\dot{\bs b}^1, \dot{\bs b}^2$ such that
$\mathfrak t = \mathfrak t({\dot{\bs b}^1,\dot{\bs b}^2})$.
Furthermore, all of the other tableaux $\mathfrak t({\bs b^1,\bs b^2}) \neq \mathfrak t$
are obtained from $\mathfrak t$ 
\smallskip 
by the type of exchange described in Lemma \ref{lem:exchange}, so that
$\mathfrak t({\bs b^1,\bs b^2}) \triangleleft \mathfrak t$.

It thus remains to show that $Z_\Pi^{(\mathfrak t)}$ occurs as a summand with coefficient $C({\dot{\bs b}^1, \dot{\bs b}^2})=1$. 
We claim that $g_u< g_{u+1}$.  Suppose not. Then $g_u = g_{u+1}\in [m+ n]_1$, since 
$\mathfrak t$ is row costandard. 
The fact that $j_0$ was chosen to be minimal implies that 
$g_u \leq h_{p_2-v-1} \leq h_{p_2-v} \leq g_{u+1}$, 
with $g_u= h_{p_2-v-1}$ only if $g_u, h_{p_2-v-1}\in [m+ n]_0$, a contradiction.
On the other hand, it follows by the definition of $r$ that $h_{p_2-v+r}<h_{p_2-v+r+1}$.
Hence, we have that $\text{Im}(\bs a)\cap\text{Im}(\dot{\bs b}^1)=\emptyset$ and
$\text{Im}(\dot{\bs b}^2)\cap \text{Im}(\bs c)=\emptyset$.  It then follows from
Lemma \ref{lem:multiply}.(ii) that the coefficient of $Z_\Pi^{(\mathfrak t)}$ is $1$.
This completes the proof when $l(\lambda)=2$.

Now suppose $l(\lambda)>2$.  Since $\mathfrak t$ is not costandard, there
must be some pair $(i,j), (i+1,j)\in \triangle_{\lambda/\mu}$ such that 
$\mathfrak t(i,j) > \mathfrak t(i+1,j)$, or such that 
$\mathfrak t(i,j) = \mathfrak t(i+1,j)$ and $\mathfrak t(i,j), \mathfrak t(i+1,j)\in [m+ n]_1$.
We may then apply the above argument to the tableau 
$\bar{\mathfrak t}$ which consists of  $i$-th and $(i+1)$-st rows of $\mathfrak t$.
Since the maps $\diamondsuit_i(\lambda/\mu, u,v ;M)(Z_\Pi^{(\bs a,\bs b, \bs c)})$
do not affect the other rows of $\mathfrak t$, we may again obtain row costandard tableaux 
$\mathfrak t_l\triangleleft \mathfrak t$ 
such that $\mathfrak t + \sum  C_l \mathfrak t_l \in 
\text{Im }\diamondsuit_i(\lambda/\mu, u,v ;M)
\subset \text{Im } \diamondsuit_{\lambda/\mu}(M)$.
\end{proof}
%

%cor
\begin{corollary}\label{cor:cosets}
The cosets of the elements $Z_\Pi^{(\mathfrak{t})}$, indexed by costandard tableaux $\mathfrak{t} \in 
\text{Tab}_{\Zc}(\lambda/\mu)$,  span
the superspace 
$\Gamma^{\lambda/\mu}_\Pi(M)\hpp \big{/}\hpp \text{Im}\hpp\hpp\diamondsuit_{\lambda/\mu}(M)$.
\end{corollary}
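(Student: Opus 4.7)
The plan is to deduce the corollary from the Straightening Law (Lemma \ref{lem:straighten}) by a downward induction along the partial order $\trianglelefteq$ restricted to row costandard tableaux.

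First I would recall, from the discussion preceding Lemma \ref{lem:straighten}, that the set
\[
\{Z_\Pi^{(\mathfrak{t})} : \mathfrak{t} \in \text{Tab}_{\Zc}(\lambda/\mu) \text{ is row costandard}\}
\]
is already a basis of $\Gamma^{\lambda/\mu}_\Pi(M)$. Hence their cosets span $\Gamma^{\lambda/\mu}_\Pi(M)/\text{Im}\,\diamondsuit_{\lambda/\mu}(M)$, and it suffices to rewrite the coset of each row costandard but non-costandard $Z_\Pi^{(\mathfrak{t})}$ as a linear combination of cosets of costandard ones.

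Next I would set up the induction. By Remark \ref{rmk:quasi-order}, $\trianglelefteq$ is a genuine partial order on the finite set of row costandard tableaux of shape $\lambda/\mu$; in particular the set of row costandard tableaux strictly below a given $\mathfrak{t}$ is finite. I would induct on $\mathfrak{t}$ with respect to this order: the base case consists of the minimal row costandard tableaux, which are necessarily costandard (since any row costandard non-costandard tableau admits a strictly smaller row costandard tableau by the Straightening Law, so it cannot be minimal). For the inductive step, suppose $\mathfrak{t}$ is row costandard but not costandard. Lemma \ref{lem:straighten} produces row costandard tableaux $\mathfrak{t}_l \triangleleft \mathfrak{t}$ and integers $C_l$ with
\[
Z_\Pi^{(\mathfrak{t})} \equiv -\sum_l C_l Z_\Pi^{(\mathfrak{t}_l)} \pmod{\text{Im}\,\diamondsuit_{\lambda/\mu}(M)}.
\]
By induction each $Z_\Pi^{(\mathfrak{t}_l)}$ is congruent, modulo $\text{Im}\,\diamondsuit_{\lambda/\mu}(M)$, to a linear combination of costandard $Z_\Pi^{(\mathfrak{s})}$, and substituting gives the desired expression for the coset of $Z_\Pi^{(\mathfrak{t})}$.

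There is no real obstacle here since all the work has been done in Lemma \ref{lem:straighten}; the only thing to verify is that the induction is well-founded, which is immediate from Remark \ref{rmk:quasi-order} together with the finiteness of $\text{Tab}_{\Zc}(\lambda/\mu)$. I would conclude that every coset in $\Gamma^{\lambda/\mu}_\Pi(M)/\text{Im}\,\diamondsuit_{\lambda/\mu}(M)$ is a $\k$-linear combination of cosets $Z_\Pi^{(\mathfrak{t})} + \text{Im}\,\diamondsuit_{\lambda/\mu}(M)$ with $\mathfrak{t}$ costandard, as required.
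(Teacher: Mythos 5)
Your proof is correct and follows essentially the same route as the paper: note that the row costandard $Z_\Pi^{(\mathfrak{t})}$ already span, then reduce to costandard tableaux by applying the Straightening Law iteratively along the well-founded order $\triangleleft$. The paper states this compactly as ``follows from Lemma \ref{lem:straighten} by induction''; you have simply made the induction explicit, including the observation (via Remark \ref{rmk:quasi-order}) that $\trianglelefteq$ is a genuine partial order on the finite set of row costandard tableaux so the induction terminates.
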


\begin{proof}
It is clear that the set of cosets, 
$Z_\Pi^{(\mathfrak t)} + \text{Im}\hpp\hpp\diamondsuit_{\lambda/\mu}(M)$, 
corresponding to row costandard 
 $\mathfrak t \in \text{Tab}_{\Zc}(\lambda/\mu)$,
generates 
$\Gamma^{\lambda/\mu}_\Pi(M)\hpp \big{/}\hpp \text{Im}\hpp\hpp\diamondsuit_{\lambda/\mu}(M)$.
The Corollary then follows from Lemma \ref{lem:straighten} by induction.
\end{proof}

%prop
\begin{proposition}\label{prop:kernel}
There is an embedding $\text{Im}\hpp\hpp\diamondsuit_{\lambda/\mu} \hookrightarrow %\subset 
\text{Ker} \hpp \hpp \hat{\theta}_{\lambda'/\mu'}$ in the category $(\Pol^\I_d)_\ev$.
\end{proposition}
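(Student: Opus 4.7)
The plan is to show $\hat{\theta}_{\lambda'/\mu'}\circ\diamondsuit_{\lambda/\mu}=0$ as a natural transformation, which is equivalent to the stated inclusion $\mathrm{Im}\,\diamondsuit_{\lambda/\mu}\hookrightarrow\mathrm{Ker}\,\hat\theta_{\lambda'/\mu'}$. By Lemma~\ref{lem:factor}, $\hat\theta_{\lambda'/\mu'}$ factors so that a two-row piece $\hat\theta_{(\lambda_i,\lambda_{i+1})'/(\mu_i,\mu_{i+1})'}$ acts on rows $i$ and $i+1$ while the remaining rows pass through the identity; correspondingly, each summand of $\diamondsuit_{\lambda/\mu}$ is a tensor product of identities with $\diamondsuit_1$ applied to a consecutive pair. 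Thus it suffices to treat the two-row case $\lambda=(\lambda_1,\lambda_2)$, $\mu=(\mu_1,\mu_2)$, and one may further assume $\mu_1<\lambda_2$, since otherwise the condition $u+v<\lambda_2-\mu_1$ is vacuous and $\diamondsuit_1$ is trivially zero.

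In the two-row case, I would compute $\hat\theta_{\lambda'/\mu'}\circ\diamondsuit_1(\lambda/\mu,u,v;M)$ explicitly on a basis element $Z_\Pi^{(\bs a)}\otimes Z_\Pi^{(\bs b)}\otimes Z_\Pi^{(\bs c)}$ of $\Gamma^u_\Pi M\otimes\Gamma^k_\Pi M\otimes\Gamma^v_\Pi M$, with $k=p_1+p_2-u-v$. Lemma~\ref{lem:multiply}(i) expands the comultiplication $\Delta(Z_\Pi^{(\bs b)})$ as a signed sum over standardized splits $\mathrm{st}_{\Zc_\pi}(\bs b^1\vee\bs b^2)=\bs b$, and Lemma~\ref{lem:multiply}(ii) then evaluates the subsequent products $Z_\Pi^{(\bs a)}\coasterisk Z_\Pi^{(\bs b^1)}$ and $Z_\Pi^{(\bs b^2)}\coasterisk Z_\Pi^{(\bs c)}$ to divided-power basis vectors. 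Embedding the result into $(\Pi M)^{\otimes d}$ via $\Delta_\Pi\otimes\Delta_\Pi$ and applying the bialgebra compatibility~\eqref{eq:diagram1} together with~\eqref{eq:div_tensor} for $\Gamma^{\udot}(\Pi M)$, one rewrites it as a signed sum of shuffle translates of $\Delta_\Pi(Z_\Pi^{(\bs a)})\otimes\Delta_\Pi(Z_\Pi^{(\bs b)})\otimes\Delta_\Pi(Z_\Pi^{(\bs c)})$: the shuffles mix the $\bs a$-positions with the first $p_1-u$ $\bs b$-positions in the first $p_1$ slots, and the last $p_2-v$ $\bs b$-positions with the $\bs c$-positions in the remaining $p_2$ slots.

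Applying $\sigma_{\lambda'/\mu'}$ rearranges these tensor factors into column-reading order, and multiplication in $S^{\lambda'/\mu'}M$ collapses each overlap column of length two into $S^2M$. The hypothesis $u+v<\lambda_2-\mu_1$ guarantees at least one overlap column in which one $\bs b$-entry from the ``first $p_1-u$ half'' and one from the ``last $p_2-v$ half'' land together; the resulting shuffle sum over distributions of $\bs b$-entries between the two rows then coincides, after $\sigma_{\lambda'/\mu'}$ and the final multiplication, with a super-Garnir-type identity that forces the total to vanish inside the targeted symmetric powers. The principal obstacle is sign bookkeeping: each shuffle contributes a factor from~\eqref{eq:sgn}, and one must verify that super-signs from $\Pi$-twists, the super-braiding, and the permutation $\sigma_{\lambda'/\mu'}$ combine so that the shuffle sum genuinely vanishes rather than merely symmetrizes. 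As a sanity check, one can specialize to $M=M_0$ or $M=M_1$ and invoke Proposition~\ref{prop:SchurComplex} to recover the classical Garnir relations for Schur and Weyl functors in \cite[Prop.~II.2.10]{ABW}.
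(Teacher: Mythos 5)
Your reduction to the two-row case via Lemma~\ref{lem:factor}, the observation that $\mu_1<\lambda_2$ is the nontrivial case, and the expansion via Lemma~\ref{lem:multiply}, \eqref{eq:div_tensor} and \eqref{eq:Delta} all match the paper's proof. You also correctly identify the pivotal role of the inequality $u+v<\lambda_2-\mu_1$: it guarantees that at least one overlap column receives both its entries from $\bs b$. So the setup and strategy are right.

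The gap is in the final step. You write that the shuffle sum ``coincides, after $\sigma_{\lambda'/\mu'}$ and the final multiplication, with a super-Garnir-type identity that forces the total to vanish,'' and then concede that the ``principal obstacle is sign bookkeeping.'' That is precisely the part a proof has to supply, and the way the paper does it is not a global symmetrization argument but a \emph{sign-reversing involution}. Concretely: having expanded $\Delta_\Pi\circ\diamondsuit_{\lambda/\mu}(M)(Z_\Pi^{(\bs a,\bs b,\bs c)})$ into a signed sum of terms $Z^{\bs a\vee\bs b^1}\hmm.\rho\otimes Z^{\bs b^2\vee\bs c}\hmm.\rho'$ indexed by tableaux $\mathfrak t(\bs b^1,\bs b^2;\rho,\rho')$ of shape $\lambda/\mu$, one lets $\check\jmath$ be the \emph{minimal} column for which both $\mathfrak t(1,\check\jmath)$ and $\mathfrak t(2,\check\jmath)$ are $\bs b$-entries, and defines $\check{\mathfrak t}$ by interchanging those two entries. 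Because $\check\jmath$ depends only on the shape data guaranteed by $u+v<\lambda_2-\mu_1$, the map $\mathfrak t\mapsto\check{\mathfrak t}$ is a well-defined fixed-point-free involution on the index set, and the proposition reduces to verifying a two-term identity: $\mathrm{sgn}_\pi(\bs b^1,\bs b^2;\rho,\rho')\,m(Z^{\mathfrak t}.\sigma)+\mathrm{sgn}_\pi(\check{\bs b}^1,\check{\bs b}^2;\check\rho,\check\rho')\,m(Z^{\check{\mathfrak t}}.\sigma)=0$, where $\sigma$ is the skew-shape permutation. That two-term check is a finite computation in $S^2M$. Without naming this involution (and in particular without the ``minimal overlap column'' choice, which is what makes it a well-defined bijection), saying the sum ``vanishes rather than merely symmetrizes'' is an assertion, not an argument. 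I would replace the appeal to a super-Garnir identity by the explicit pairing and the resulting two-term cancellation.

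One smaller point: the sanity check of specializing to $M=M_0$ or $M=M_1$ and citing the classical Garnir relations via Proposition~\ref{prop:SchurComplex} is a good consistency test, but it cannot substitute for the sign computation: the two specializations see only the all-even and all-odd sign patterns, while the mixed case is exactly where the super-signs interact nontrivially.
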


\begin{proof}
Let $M\in \svec$. It suffices to show that 
$\text{Im}\hpp\hpp\diamondsuit_{\lambda/\mu}(M) \subset \text{Ker} \hpp \hpp \hat{\theta}_{\lambda/\mu}(M)$.
By Lemma \ref{lem:factor} and the definition of $\diamondsuit_{\lambda/\mu}(M)$, we may assume that $l(\lambda)=2$. 
Then suppose we are given standardized $\bs a, \bs b, \bs c \in I(m|n)$ which are  $\Zc_\pi$-restricted with  $l(\bs a) = u$, $l(\bs b)=p_1+p_2-u-v$, and $l(\bs c)= v$, where
$p_1= \lambda_1-\mu_1$, $p_2= \lambda_2-\mu_2$ and $u+v<\lambda_2-\mu_1$. 
Then there is a corresponding basis element
$$Z_\Pi^{(\bs a,\bs b,\bs c)}\hpp =\hpp\hpp Z_\Pi^{(\bs a)}\otimes Z_\Pi^{(\bs b)}\otimes Z_\Pi^{(\bs c)}\hpp \in \hppp
\Gamma_\Pi^u(M) \otimes \Gamma_\Pi^{p_1-u+p_2-v}(M)\otimes\Gamma_\Pi^{v}( M).$$ 
%
%where $p_1= \lambda_1-\mu_1$, $p_2= \lambda_2-\mu_2$ and $u+v< \lambda_2-\mu_1$. 
%
We must  show that the composition
$
\hat{\theta}_{\lambda/\mu}\circ\diamondsuit_{\lambda/\mu}(M)(Z_\Pi^{(\bs a,\bs b,\bs c)}) = 0.
$
\smallskip

Using Lemma \ref{lem:multiply}, (\ref{eq:div_tensor}) and (\ref{eq:Delta}), 
respectively, one may check that 
\begin{align}
\Delta_\Pi\circ\diamondsuit_{\lambda/\mu}(M)&(Z_\Pi^{(\bs a, \bs b,\bs c)})
=
\sum_{\bs b^1, \bs b^2} \text{sgn}_{\Zc_\pi}(\bs b^1; \bs b^2) \hppp
\Delta_\Pi(Z_\Pi^{(\bs a)} \coasterisk Z_\Pi^{(\bs b^1)}) \otimes
\Delta_\Pi(Z_\Pi^{(\bs b^2)} \coasterisk Z_\Pi^{(\bs c)}) \nonumber \\
=&
\sum_{\bs b^1, \bs b^2}\sum_{\sigma} \text{sgn}_{\Zc_\pi}(\bs b^1; \bs b^2) 
\big[ \Delta_\Pi(Z_\Pi^{(\bs a)}) \otimes \Delta_\Pi(Z_\Pi^{(\bs b^1)})\big].\sigma \otimes
\big[\Delta_\Pi(Z_\Pi^{(\bs b^2)}) \otimes \Delta_\Pi(Z_\Pi^{(\bs c)}) \big].\sigma' \nonumber \\ 
=&
 \sum_{\bs b^1,\bs b^2} \sum_{\rho,\rho'} \text{sgn}_{\Zc_\pi}(\bs b^1; \bs b^2)\hpp 
\hpp Z^{\bs a \vee \bs b^1}\hmm\hmm.\rho\hpp \otimes\hpp
 Z^{\bs b^2\vee \bs c}.\rho',   \label{eq:sum3}
\end{align}
summing over all pairs of standardized sequences $\bs b^1, \bs b^2$ such that 
$l(\bs b^1) = p_1-u$, $l(\bs b^2) = p_2-v$ and 
$\text{st}(\bs b^1\vee \bs b^2) = \bs b$, over all pairs of permutations
$\sigma \in \Si_{p_1}  /\Si_u\times \Si_{p_1-u}$ and $\sigma' \in \Si_{p_2}/ \Si_{p_2-v}\times\Si_v$,
and over all pairs
$\rho \in \Si_{p_1}/\Si_{\bs a} \times \Si_{\bs b^1}$ and $\rho'\in  \Si_{p_2}/ \Si_{\bs b^2} \times \Si_{\bs c}$, respectively.
\smallskip

Now let $\mathfrak t(\bs b^1, \bs b^2; \rho, \rho')$ denote the tableau of shape 
$\lambda/\mu$ whose first row equals $(\bs a \vee \bs b^1).\rho$ and with second row 
$(\bs b^2 \vee \bs c).\rho'$.
Let us also consider the map $\bar{\theta}_{\lambda/\mu}: \Gamma_\Pi^{\lambda/\mu} M
\to M^{\otimes d}$ defined as the composition
$$ \Gamma_\Pi^{\lambda/\mu}\ %\lhook\joinrel\rightarrow\ %\Pi^{\otimes d}\  
 \stackrel{\Delta_\Pi}{\relbar \joinrel \longrightarrow}\ 
I^{\otimes d}\ 
 \stackrel{\sigma_{\lambda/\mu}}{ \relbar \joinrel \longrightarrow}\
I^{\otimes d},$$
which is the first part of the composition forming the natural transformation 
$\hat{\theta}_{\lambda/\mu}:\Gamma_\Pi^{\lambda/\mu}\to S^{\lambda'/\mu'}$. 
Note that  $\hat{\theta}_{\lambda/\mu}(M) = m\circ \bar{\theta}_{\lambda/\mu}(M)$
where $m$ denotes tensor product of multiplications in $S^\udot M$.

Since $u+v < \lambda_2 - \mu_1$, it follows that, for each 
$\mathfrak t = \mathfrak t(\bs b^1, \bs b^2; \rho, \rho')$, 
there exists a pair $(1,j), (2,j)\in \triangle_{\lambda/\mu}$ such
that $\mathfrak t(1,j)$, $\mathfrak t(2,j)$ both come from entries of $\mathbf b$. 
Let $\check{\jmath}$ denote the minimal such $j$. 
We then define an involution on the set of all $\mathfrak t(\bs b^1, \bs b^2; \rho, \rho')$ 
given by $\mathfrak t \mapsto \check{\mathfrak t} = 
\check{\mathfrak t}(\check{\bs b}^1, \check{\bs b}^2; \check{\rho}, \check{\rho}') $, where $\check{\mathfrak t}$ 
is the tableau obtained from $\mathfrak t$ by interchanging the entries
in the $(1, \check{\jmath})$ and $(2, \check{\jmath})$ positions.
\smallskip

For each tableau $\mathfrak t = \mathfrak t(\bs b^1, \bs b^2; \rho, \rho')$ as above, 
let us denote
$$
\text{sgn}_{\pi}(\bs b^1, \bs b^2; \rho, \rho')\hpp :=\hpp \text{sgn}_{\Zc_\pi}(\bs b^1, \bs b^2)\times
\text{sgn}_{\Zc_\pi}(\bs a\vee \bs b^1; \rho )\times \text{sgn}_{\Zc_\pi}( \bs b^2\vee \bs c; \rho').
$$
From (\ref{eq:sgn}) and  (\ref{eq:sum3}), it then remains to show that the identity
$$ \text{sgn}_{\pi}(\bs b^1, \bs b^2; \rho, \rho')\hpp 
m(Z^{\mathfrak t}.\hpp\sigma_{\lambda/\mu})\ +\   
 \text{sgn}_{\pi}(\check{\bs b}^1, \check{\bs b}^2; \check{\rho}, \check{\rho}')\hpp 
 m(Z^{\check{\mathfrak t}}.\hpp\sigma_{\lambda/\mu})\hpp 
 =\hpp 0$$
holds for each $\mathfrak t = \mathfrak t(\bs b^1, \bs b^2; \rho, \rho')$, and this may be checked
 directly by computation.
\end{proof}

Let $M\in \svec$ with $\sdim(M)=m|n$, and fix a totally ordered $\Z_2$-homogenous basis $\Zc$ of $M$.
Also write $[m+n]= [m+n]_\Zc$ as above. 

%thm
\begin{theorem}[Standard Basis Theorem for Schur Superfunctors]\label{thm:standard}
Let $\mu\subset \lambda$ be partitions such that $|\lambda/\mu|=d$. 
The images $\hat{\theta}_{\lambda'/\mu'}(Z_\Pi^{(\mathfrak t)})$ of elements
indexed by costandard tableaux, $\mathfrak t\in \text{Tab}_{[m+n]}(\lambda/\mu)$, are linearly
independent and form a basis of $\widehat{S}_{\lambda'/\mu'}M$.  Moreover, we have an
even isomorphism
$$\widehat{S}_{\lambda'/\mu'} \simeq \Gamma^{\lambda/\mu}_\Pi\big{/} \text{Im}\hpp\hpp\diamondsuit_{\lambda/\mu}$$
in the category $\Pol^\I_d$.
\end{theorem}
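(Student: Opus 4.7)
The plan is to combine Proposition \ref{prop:kernel} with Corollary \ref{cor:cosets} to produce a natural surjection from the quotient $\Gamma^{\lambda/\mu}_\Pi / \text{Im}\,\diamondsuit_{\lambda/\mu}$ onto $\widehat{S}_{\lambda'/\mu'}$ that is generated by costandard classes, and then to force it to be an isomorphism via a dimension count inherited from the classical Schur complex.

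First, Proposition \ref{prop:kernel} shows that $\hat{\theta}_{\lambda'/\mu'}$ annihilates $\text{Im}\,\diamondsuit_{\lambda/\mu}$, so it factors as an even natural surjection
\[
\pi:\ \Gamma^{\lambda/\mu}_\Pi \big{/} \text{Im}\,\diamondsuit_{\lambda/\mu}\ \twoheadrightarrow\ \widehat{S}_{\lambda'/\mu'}
\]
in $(\Pol^\I_d)_{\mathrm{ev}}$. Fixing $M \in \svec$ with homogeneous ordered basis $\Zc$, Corollary \ref{cor:cosets} shows that the cosets of the $Z_\Pi^{(\mathfrak t)}$ indexed by costandard $\mathfrak t$ span the domain of $\pi(M)$, hence $\{\hat{\theta}_{\lambda'/\mu'}(Z_\Pi^{(\mathfrak t)})\}_{\mathfrak t \text{ costandard}}$ spans $\widehat{S}_{\lambda'/\mu'}(M)$. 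For the linear independence, I would pass to underlying vector spaces via Proposition \ref{prop:SchurComplex}, which identifies $\underline{\widehat{S}_{\lambda'/\mu'}(M)}$ with the Schur complex $SC_{\lambda'/\mu'}(\underline{M_0}, \underline{M_1})$. The Standard Basis Theorem for Schur complexes in \cite{ABW} provides an explicit basis of this space, and a bookkeeping argument matches it with the set of costandard tableaux of shape $\lambda/\mu$ having entries in $[m+n]_\Zc$. Equality of cardinalities then promotes the spanning set above to a basis and forces $\pi(M)$ to be injective; since $\pi$ is a natural transformation, a pointwise isomorphism is a categorical isomorphism, and therefore $\pi$ is itself an isomorphism in $(\Pol^\I_d)_{\mathrm{ev}}$.

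The principal obstacle I anticipate is precisely this combinatorial matching of indexing sets. The Schur complex basis of \cite{ABW} is indexed by standard tableaux on the conjugate diagram whose entries in the $\underline{M_0}$ and $\underline{M_1}$ alphabets obey prescribed row/column strictness conditions, and one must verify that transposition along the main diagonal, combined with the $\Z_2$-parity swap built into our use of $\Pi$ in the definition of $\hat{\theta}_{\lambda'/\mu'}$, converts this to our notion of costandard tableau on $\lambda/\mu$ with entries in $[m+n]_\Zc$. Once that bijection is pinned down, the remaining steps — factoring through $\text{Im}\,\diamondsuit_{\lambda/\mu}$, spanning by costandard classes via Corollary \ref{cor:cosets}, and promoting the pointwise isomorphism to a morphism of polynomial superfunctors — are essentially formal.
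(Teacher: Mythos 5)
Your approach agrees with the paper for the surjectivity half: both invoke Proposition \ref{prop:kernel} to factor $\hat{\theta}_{\lambda'/\mu'}$ through $\Gamma_\Pi^{\lambda/\mu}/\text{Im}\,\diamondsuit_{\lambda/\mu}$ and Corollary \ref{cor:cosets} to see that costandard classes span. For linear independence, however, you take a genuinely different route. The paper proves independence directly and self-containedly by a leading-term (triangularity) argument: it exhibits $S^{\lambda'/\mu'}M$ with its basis $\{Z_{(\mathfrak s)}\}$ of row-standard monomials, totally ordered by $\trianglelefteq$ (Remark \ref{rmk:quasi-order}), and shows via the explicit formula for $\Delta_\Pi$ and the analysis of cosets $\Si_\lambda/\Si_{\mathfrak t}$ that $\hat{\theta}_{\lambda'/\mu'}(Z_\Pi^{(\mathfrak t)})$ has strictly smallest term $Z_{(\mathfrak t')}$ with coefficient $\pm 1$, and that these leading terms are pairwise distinct over costandard $\mathfrak t$. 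You instead pass to underlying vector spaces via Proposition \ref{prop:SchurComplex}, import the Standard Basis Theorem for Schur complexes from \cite{ABW}, match indexing sets, and close with a dimension count. Your route is shorter and leverages an established external result, but it depends on Proposition \ref{prop:SchurComplex} (whose proof the paper merely sketches) and requires pinning down precisely the transposition-plus-parity-swap bijection you flag as the principal obstacle; that bookkeeping is real work and is essentially the same content that the paper's direct computation carries out internally. The paper's argument, while longer, is self-contained, avoids re-deriving the \cite{ABW} theorem, and moreover records the useful triangularity of $\hat{\theta}_{\lambda'/\mu'}$ in the $\trianglelefteq$-order, which you would not obtain as a byproduct of the dimension count.
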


\begin{proof}
It follows from the definition of $\widehat{S}_{\lambda/\mu}$, Corollary \ref{cor:cosets}, 
and Proposition \ref{prop:kernel} that there is a surjective map 
$$\Gamma^{\lambda/\mu}_\Pi \big{/} \text{Im}\hpp\hpp\diamondsuit_{\lambda/\mu}
\twoheadrightarrow \widehat{S}_{\lambda'/\mu'}$$
in the category $\Pol^\I_d$. 
Hence, the isomorphism will follow once we prove that $\hat{\theta}_{\lambda'/\mu'}(Z_\Pi^{(\mathfrak t)})$,
with $\mathfrak t$ costandard, are linearly independent in $\widehat{S}_{\lambda'/\mu'}(M)$.

To show linear independence, recall the surjection 
$m:M^{\otimes d} \twoheadrightarrow S^{\lambda'/\mu'}M$. Then the set of all
$$Z_{(\mathfrak s)}\hp :=\hpp m(Z^{\mathfrak s})\hpp \in\hp S^d M,$$ 
such that  
$\mathfrak s \in \text{Tab}_{[m+n]}(\lambda'/\mu')$ 
is row standard, forms a basis of $S^{\lambda'/\mu'}M$. 
It follows from Remark \ref{rmk:quasi-order} that $\trianglelefteq$ gives a total order
on these row-standard $\mathfrak s$.

Now given any tableau $\mathfrak t \in \text{Tab}_{[m+n]}(\lambda/\mu)$, we let
$\mathfrak t' \in \text{Tab}_{[m+n]}(\lambda'/\mu')$ denote the tableau of  
conjugate shape $\lambda'/\mu'$ obtained
by setting $\mathfrak t'(i,j) = \mathfrak t(j,i)$ for all $(i,j) \in \triangle_{\lambda'/\mu'}$. 
Clearly,  $\mathfrak t \in \text{Tab}_{[m+n]}(\lambda/\mu)$ is
costandard if and only if  $\mathfrak t'$ is standard. 
Furthermore, notice that
$$Z^{\mathfrak s}.\sigma_{\lambda'/\mu'} = \text{sgn}(\mathfrak t, \sigma_{\lambda'/\mu'}) Z^{\mathfrak s'}
\in M^{\otimes d},$$
for any $\mathfrak s \in \text{Tab}_{[m+n]}(\lambda/\mu)$, 
and the smallest element occurring in 
$\hat{\theta}_{\lambda'/\mu'}(Z^{(\mathfrak t)})$ with 
respect to the above order, $\trianglelefteq$, is 
\begin{equation*}
Z_{(\mathfrak t')} \in S^{\lambda'/\mu'}M.
\end{equation*} 
This follows, since the action of the map $\Delta_\Pi(M)$ is given
explicitly by 
$$\Delta_\Pi(M). Z_\Pi^{(\mathfrak t)} = \sum_{\sigma \in \Si_\lambda/\Si_\mathfrak{t}}
\text{sgn}_\pi(\mathfrak t; \sigma) Z^{\mathfrak t.\sigma}$$
%Z_\Pi^{\mathfrak t}.\sigma,$$
where $\Si_{\mathfrak t} = \Si_{\mathfrak t^1}\times \dots \times \Si_{\mathfrak t^q}\subset$
$\Si_{\lambda_1} \times \dots \times \Si_{\lambda_q} = \Si_\lambda$.
Now there is exactly one coset in $\Si_\lambda/\Si_\mathfrak{t}$ which fixes $Z_\Pi^{\mathfrak t}$.
Every other representative $\sigma \in \Si_\lambda/\Si_\mathfrak{t}$ must put bigger elements
of $\mathfrak t$ to earlier columns.  
Therefore, for such $\sigma$ we obtain later elements $Z_{((\mathfrak t.\sigma)')}$, 
with respect to the order $\trianglelefteq$ in $S^{\lambda'/\mu'}M$. 

Moreover, one sees immediately that $Z_{(\mathfrak t')}$ occurs
in $\hat{\theta}_{\lambda'/\mu'}(Z^{(\mathfrak t)})$ with coefficient
$\pm 1$.  
It is also obvious that the initial elements $Z_{(\mathfrak t')}$
are different for different costandard tableaux $\mathfrak t$.
This proves that the images $\hat{\theta}_{\lambda'/\mu'}(Z^{(\mathfrak t)})$
corresponding to costandard $\mathfrak t$ are linearly independent.
\end{proof}

%ss
%\subsection{Schur superfunctors of type $\II$} 

We next wish to provide a %\vspace{.03cm} %
standard basis for the evaluation of the Schur 
superfunctor $\widehat{S}_{\lambda/\mu}^{\hp \II}$
at a supermodule $M\in \smod\hpp \C_1$.
\smallskip

Let $\U_r(1) \in \smod\hpp \C_1$ denote the Clifford superalgebra,
 $\C_1$, considered as a
supermodule over itself via right multiplication.  Since $\C_1$ is
a simple superalgebra, the supermodule $\U_r(1)$ is irreducible.
Recall from \cite[Ex.~2.6]{Axtell} that for every $V\in \smod\hpp \C_1$ there
is a $\C_1$-supermodule isomorphism
$$ V \simeq \U_r(1)^n = \U_r(1)^{\oplus n}$$
for some $n\geq 0$, so that $\sdim(V) = n|n$.

Now fix $V\in \smod\hpp \C_1$.  Then, as a vector superspace, we have 
$$\widehat{S}^{\hp \II}_{\lambda/\mu}(V)
= \text{Res}_{\C,d}(\widehat{S}_{\lambda/\mu})(V)
= \widehat{S}_{\lambda/\mu}(F_{\C,d}(V)).$$
As above, we choose a totally ordered $\Z_2$-homogeneous 
basis $\Zc$ of the superspace $F_{\C,d}(V)$, and let $[n+n] = [n+n]_\Zc$.
 We again denote basis elements of $\Gamma_\Pi^{\lambda/\mu}(F_{\C}(V))$ 
by $Z_\Pi^{(\mathfrak t)}$, for row costandard $\mathfrak t\in \text{Tab}_{[n+n]}(\lambda/\mu)$.
\smallskip

As a direct consequence of Theorem \ref{thm:standard}, we now have the following.

%thm
\begin{corollary}[Standard Basis for Schur Superfunctors of Type $\II$]\label{cor:standard}
The images $\hat{\theta}^{\hp \II}_{\lambda'/\mu'}(Z_\Pi^{(\mathfrak t)})$ of elements
indexed by costandard tableaux, $\mathfrak t\in \text{Tab}_{[n+n]}(\lambda/\mu)$, form 
a basis of $\widehat{S}^{\hp \II}_{\lambda'/\mu'}M$, where 
$\hat{\theta}^{\hp \II}_{\lambda'/\mu'}: 
\Gamma_\Pi^{\lambda'/\mu'} \to S^{\lambda/\mu}$ denotes the even 
morphism in $\Pol^\II_d$ formed by composition of 
$\hat{\theta}_{\lambda'/\mu'}$ with the functor 
$F_{\C,d}:\mathsf{\Gamma_\Q^d} \to \mathsf{\Gamma_\M^d}$.
\end{corollary}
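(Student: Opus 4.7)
The plan is to deduce this corollary directly from Theorem \ref{thm:standard} by unraveling the definition of the restriction functor $\text{Res}_{\C,d}$ and observing that the construction commutes with passing to the underlying ordinary superspace. Since $V \simeq \U_r(1)^n$ in $\smod\hpp\C_1$, the underlying superspace $F_{\C,d}(V)$ has $\sdim(F_{\C,d}(V)) = n|n$, and a totally ordered $\Z_2$-homogeneous basis $\Zc$ of $F_{\C,d}(V)$ yields the indexing set $[n+n]_\Zc$ precisely in the form required by Theorem \ref{thm:standard}.

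First I would spell out that, by the definition $\widehat{S}^{\hp\II}_{\lambda'/\mu'} = \text{Res}_{\C,d}(\widehat{S}_{\lambda'/\mu'})$, evaluation at $V$ gives
\[
\widehat{S}^{\hp\II}_{\lambda'/\mu'}(V)\ =\ \widehat{S}_{\lambda'/\mu'}(F_{\C,d}(V)),
\]
and likewise the morphism $\hat{\theta}^{\hp\II}_{\lambda'/\mu'}(V)$ is literally the linear map $\hat{\theta}_{\lambda'/\mu'}(F_{\C,d}(V))$. Similarly, the source $\Gamma_\Pi^{\lambda'/\mu'}(V)$ of $\hat{\theta}^{\hp\II}_{\lambda'/\mu'}$ coincides, as a superspace, with $\Gamma_\Pi^{\lambda'/\mu'}(F_{\C,d}(V))$, and the basis elements $Z_\Pi^{(\mathfrak t)}$ described in Section \ref{ss:standard} are the same on both sides.

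Once this identification is made, Theorem \ref{thm:standard} applied to the superspace $M := F_{\C,d}(V)$ of superdimension $n|n$ asserts exactly that the images $\hat{\theta}_{\lambda'/\mu'}(Z_\Pi^{(\mathfrak t)})$, as $\mathfrak t$ ranges over costandard tableaux in $\text{Tab}_{[n+n]_\Zc}(\lambda/\mu)$, form a basis of $\widehat{S}_{\lambda'/\mu'}(M)$. Translating this statement back through the equality of superspaces above yields the claimed basis for $\widehat{S}^{\hp\II}_{\lambda'/\mu'}(V)$.

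There is no real obstacle here: the corollary is essentially a bookkeeping exercise, since the restriction functor $\text{Res}_{\C,d}$ acts trivially at the level of underlying vector superspaces and intertwines the natural transformations $\hat{\theta}_{\lambda'/\mu'}$ and $\hat{\theta}^{\hp\II}_{\lambda'/\mu'}$ by construction. The only point to be careful about is verifying that the choice of ordered $\Z_2$-homogeneous basis $\Zc$ of $F_{\C,d}(V)$ used to index tableaux on the type $\II$ side is a legitimate input to Theorem \ref{thm:standard}, which it is precisely because $F_{\C,d}(V) \in \svec$.
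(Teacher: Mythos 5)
Your proposal is correct and matches the paper's intent exactly: the paper presents this as a direct consequence of Theorem \ref{thm:standard} with no further proof given, and your unwinding of the restriction functor $\text{Res}_{\C,d}$ and the evaluation identities is precisely the bookkeeping that justifies that assertion.
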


%Sec
\section{Filtrations of Bisuperfunctors}\label{Sec:bisuper}
In this section, we consider filtrations of the superspaces, $\widehat{S}_{\lambda/\mu}(M\oplus N)$,
for any given pair $M,N\in \svec$.  
This leads naturally to a filtration of {\em Schur  bisuperfunctors}.

%ss
\subsection{Strict polynomial bisuperfunctors}
Let $\mathsf\Gamma^d_{\M\times \M}= \mathsf\Gamma^d(\svec\times\svec)$ be the category with the same objects
as $\svec \times \svec$, i.e. pairs $(M,N)$ of finite dimensional superspaces,
and morphisms
$$ \hom_{\mathsf\Gamma^d_{\M\times \M}}\left((M,N),(M',N')\right) = \bigoplus_{e+f=d} 
\hom_{\mathsf\Gamma^e_\M}(M,M') \otimes \hom_{\mathsf\Gamma^f_\M}(N,N').$$
Here, the composition of morphisms in $\mathsf\Gamma^d_{\M\times\M}$ is given using the
rule of signs convention (as in (\ref{eq:signs})).
We then define the category $\mathsf{bi} \Pol^\I_d$ of {\em strict polynomial bisuperfunctors}
to be the category of all even $\k$-linear functors $T:\mathsf\Gamma^d_{\M\times \M} \to \svec$.  
One may also define categories $\mathsf\Gamma^d_{\Q\times \Q}$ and 
$\mathsf{biPol}^\II_d$ in a similar way.

Now suppose $S\in \Pol^\I_e$, $T\in \Pol^\I_f$. %, and let $d=e+f$.  
Then we define the {\em external tensor product} 
$S\boxtimes T:\mathsf\Gamma^{e+f}_{\M\times \M} \to \svec$ to be the bifunctor which acts on objects 
$(M,N) \in \mathsf\Gamma^{e+f}_{\M\times\M}$ by  
$$(S\boxtimes T)(M,N) = S(M) \otimes T(N).$$  Suppose $e',f'$ is another pair of nonnegative
integers such that $e+f=e'+f'$, and let $\varphi_1\in \Gamma^{e'}\Hom(M,N)$, $\varphi_2\in \Gamma^{f'}\Hom(M,N)$.  
Then, we set
$$(S\boxtimes T)(\varphi_1\otimes \varphi_2 ) = 
\begin{cases} S(\varphi_1)\otimes T(\varphi_2)& \text{ if } e'=e, f'=f \\
0 & \text{ otherwise}.
\end{cases}$$
Clearly, we have $S\boxtimes T \in \mathsf{biPol}^\I_{e+f}$.

Notice also that there is a functor $\mathsf\Gamma^d_{\M\times \M} \to \mathsf\Gamma^d_\M$ given by sending
$(M,N) \mapsto M\oplus N$.  The action on morphisms is given by the embedding
\[
\hom_{\mathsf\Gamma^d_{\M\times \M}}\left((M,N),(M',N')\right) \hookrightarrow\ 
\hom_{\mathsf\Gamma^d_\M}(M\oplus N,M'\oplus N'),
\]
which comes from the exponential property (\ref{eq:exponential}) for $\Gamma^d(\ )$.  This 
induces a functor $\ ^\mathsf{bi}: \Pol^\I_d \to \mathsf{biPol}^\I_d$, where 
$T^{\mathsf{bi}}(M,N)= T(M\oplus N)$.  
In a completely analogous way, we may also define a functor
$\ ^\mathsf{bi}: \Pol^\II_d \to \mathsf{biPol}^\II_d$.
%

%ss
\subsection{Schur bisuperfunctors}
In the remainder of this section, we will study decompositions of the strict polynomial bisuperfunctors 
$(\widehat{S}_{\lambda/\mu})^\mathsf{bi}$ and $(\widehat{S}^{\hp \II}_{\lambda/\mu})^\mathsf{bi}$.
\smallskip

We denote the lexicographic order on sequences in $\Z_+^\infty$ by $\preceq$. 
Thus, if $\alpha, \beta \in \Z_+^\infty$, then $\alpha \preceq \beta$ if $\alpha = \beta$
or if there exists some $1\leq i< \infty$ such that $\alpha_1=\beta_1, \dots,
\alpha_i= \beta_i$ and $\alpha_{i+1} <\beta_{i+1}$. 
This total order restricts to an order on partitions which is clearly consistent with the partial order of
inclusion; i.e. if $\mu\subset \lambda$, then $\mu\preceq \lambda$.
\medskip

If we consider $\Pi M$, $\Pi N$ in place of $M,N\in \svec$, then
recall from (\ref{eq:exponential}) that we have an embedding
\begin{equation}\label{eq:embedPi}
\Gamma_\Pi^{d_1}M \otimes \Gamma_\Pi^{d_2}N \hookrightarrow \Gamma_\Pi^{d_1+d_2}(M\oplus N)
\end{equation}
given by mapping $x\otimes y \mapsto x\coasterisk y$.
It follows for any partitions $\mu \subset \xi \subset \lambda$ that we have a
corresponding embedding 
\begin{equation}\label{eq:embedPi2}
\Gamma_\Pi^{\xi/\mu}(M) \otimes \Gamma_\Pi^{\lambda/\xi}(N) \hookrightarrow \Gamma_\Pi^{\lambda/\mu}(M\oplus N)
\end{equation}
given by tensoring embeddings of the form (\ref{eq:embedPi}).
Notice that this further gives an $\Svec$-enriched natural transformation:    
$\Gamma_\Pi^{\xi/\mu}\boxtimes \Gamma_\Pi^{\lambda/\xi} 
\to (\Gamma_\Pi^{\lambda/\mu})^\mathsf{bi}.$

\begin{definition}%Gamma subspace
Suppose $\mu \subset \xi \subset \lambda$ are partitions 
such that $|\lambda/\mu|=d$.
Let us define polynomial bisuperfunctors $K^\xi = K^\xi(\Gamma_\Pi^{\lambda/\mu})$
and $\dot{K}^\xi = \dot{K}^\xi(\Gamma_\Pi^{\lambda/\mu})$
as the images of the morphisms
$$ \bigoplus_{\substack{ \mu \subset \nu \subset \lambda,\\ \nu \preceq \xi}}
\Gamma_\Pi^{\nu/\mu}\boxtimes \Gamma_\Pi^{\lambda/\nu} \to (\Gamma_\Pi^{\lambda/\mu})^\mathsf{bi}, \qquad\quad
 \bigoplus_{\substack{ \mu \subset \nu \subset \lambda,\\ \nu \prec \xi}}
\Gamma_\Pi^{\nu/\mu}\boxtimes \Gamma_\Pi^{\lambda/\nu} \to (\Gamma_\Pi^{\lambda/\mu})^\mathsf{bi}
,$$
respectively.
Then  $K^\xi$ and $\dot{K}^\xi$ define objects in the category 
 $\mathsf{biPol}^\I_d$ of strict polynomial bisuperfunctors.
\end{definition}

Note for example that we have  
$ \dot{K}^\xi(M,N) = \sum_{\nu \prec \xi} K^{\nu}(M,N)$
for any $M,N \in \svec$.

\begin{definition}%super Schur-subspace
With the same assumptions as in the previous definition, we define polynomial 
bisuperfunctors, $L_\xi=L_\xi((\widehat{S}_{\lambda/\mu})^\mathsf{bi})$ 
and $\dot{L}_\xi=\dot{L}_\xi((\widehat{S}_{\lambda/\mu})^\mathsf{bi})$, by setting
$$
L_\xi(M, N) = 
\hat{\theta}_{\lambda/\mu} \big( K^\xi (M, N) \big) \quad\text{and}\quad
\dot{L}_\xi(M, N) = 
\hat{\theta}_{\lambda/\mu} \big( \dot{K}^\xi(M, N) \big),
$$
for all $M,N \in \svec$.  
Then $L_\xi$ and $\dot{L}_\xi$ are also objects in $\mathsf{biPol}^\I_d$.
\end{definition}

Now let $M,N \in \svec$. Suppose $\sdim(M)= m|n$ and $\sdim(N)= m'|n'$.  Let us write
$r=m+n$, $s= m'+n'$.  Let $\Zc=\{Z_1, \dots, Z_r\}$ and $\Zc'=\{Z'_{1}, \dots, Z'_{s}\}$
be $\Z_2$-homogeneous bases of $M$ and $N$, respectively, which are ordered as
indicated by their indices.  If we write $W_j= Z_j$ for $j=1, \dots, r$ and $W_{r+k} = Z_k'$ 
for $k=1, \dots, s$, then 
$\W= \Zc \sqcup \Zc'= \{W_1, \dots, W_{r+s}\}$ is an ordered homogeneous  basis of $M\oplus N$.
Let us introduce an indexing set for $\W$.  Denote by 
$[r+s]= [r+s]_\W$ the   set  $\{1, \dots, r+s\}$, with $\Z_2$-grading 
$$[r+s]_\varepsilon = \{1\leq i \leq r+s: |W_i| = \varepsilon\}, \quad \text{for }\varepsilon \in \Z_2,$$
and with the usual ordering of integers. 
Let $\mathfrak{t}\in \text{Tab}_{[r+s]}(\lambda/\mu)$, and define the {\em $M$-part} 
of $\mathfrak t$ to be the  sequence 
$\kappa(\mathfrak{t})$  in $(\Z_{\geq 0})^\infty$ 
such that $\kappa(\mathfrak t)_i$= $\mu_i\ +$ the number of elements of $\{1, \dots, r\}$ %$\{x_1, \dots, x_m, y_1, \dots y_n\}$ 
in the $i$-th row of $\mathfrak{t}$.
If  $\mathfrak t$ is costandard, then $\kappa(\mathfrak t)$ is a partition such that 
$\mu \subset \kappa(\mathfrak t) \subset \lambda$.
%\smallskip

%
\begin{lemma}[\cite{ABW}, Lemma II.4.4]\label{lem:abw} %partial order
Let $\mathfrak{s},\mathfrak{t}\in \text{Tab}_{[r+s]}(\lambda/\mu)$ with $
\mathfrak{s}\trianglelefteq \mathfrak{t}$. %(under the quasi-order on tableaux).  
Then $\kappa(\mathfrak{s}) \succeq \kappa(\mathfrak{t})$.
\end{lemma}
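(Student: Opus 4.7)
The plan is to exploit the specific value $q = r$ in the quasi-order condition $\mathfrak{s}_{p,q} \geq \mathfrak{t}_{p,q}$ and translate this into a statement about partial sums of the sequences $\kappa(\mathfrak{s})$ and $\kappa(\mathfrak{t})$. Recall that the ordered basis $\W$ was set up so that $W_1, \dots, W_r$ comprise the basis of $M$, and $W_{r+1}, \dots, W_{r+s}$ comprise the basis of $N$. Thus $\mathfrak{t}_{p,r}$ is, by definition, the total number of $M$-entries (i.e., entries from $\{1, \dots, r\}$) occurring in the first $p$ rows of $\mathfrak{t}$.

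First I would establish the identity
\[
\sum_{i=1}^{p} \kappa(\mathfrak{t})_i \; = \; (\mu_1 + \cdots + \mu_p) \; + \; \mathfrak{t}_{p,r}
\]
for every $p \geq 1$, and likewise for $\mathfrak{s}$. This is immediate from the definition $\kappa(\mathfrak{t})_i = \mu_i + \big( \mathfrak{t}_{i,r} - \mathfrak{t}_{i-1,r} \big)$, with the convention $\mathfrak{t}_{0,r} = 0$; the telescoping sum collapses cleanly.

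Next, applying the hypothesis $\mathfrak{s} \trianglelefteq \mathfrak{t}$ with $q = r$ gives $\mathfrak{s}_{p,r} \geq \mathfrak{t}_{p,r}$ for every $p$. Combined with the identity above (with the $\mu$-contribution cancelling) this yields the family of partial-sum inequalities
\[
\sum_{i=1}^{p} \kappa(\mathfrak{s})_i \; \geq \; \sum_{i=1}^{p} \kappa(\mathfrak{t})_i \qquad (\text{for all } p \geq 1).
\]

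Finally, I would deduce $\kappa(\mathfrak{s}) \succeq \kappa(\mathfrak{t})$ from this by a routine argument: if the two sequences coincide, there is nothing to prove; otherwise let $k$ be the minimal index at which they differ. Then the partial-sum inequality at $p = k$, together with equality of the earlier entries, forces $\kappa(\mathfrak{s})_k > \kappa(\mathfrak{t})_k$, which by definition of the lexicographic order means $\kappa(\mathfrak{s}) \succ \kappa(\mathfrak{t})$, hence $\kappa(\mathfrak{s}) \succeq \kappa(\mathfrak{t})$. There is no real obstacle here: the whole content of the lemma is the bookkeeping observation that the partial sums of $\kappa$ record exactly the $M$-mass $\mathfrak{t}_{p,r}$ in the first $p$ rows (shifted by a common $\mu$-contribution).
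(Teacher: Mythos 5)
Correct. The paper itself gives no proof of this lemma --- it is cited directly to [ABW, Lemma~II.4.4] --- so there is no in-paper argument to compare against. Your telescoping identity $\sum_{i=1}^{p}\kappa(\mathfrak{t})_i = (\mu_1+\cdots+\mu_p) + \mathfrak{t}_{p,r}$ is correct (it uses that $W_1,\dots,W_r$, i.e.\ the indices $1,\dots,r$, are exactly the $M$-basis and are initial in the total order), the specialization $q=r$ of the hypothesis gives the partial-sum dominance, and the passage from partial-sum dominance to lexicographic comparison via the first index of disagreement is exactly right (including the base case $k=1$). This is surely the same bookkeeping argument that [ABW] has in mind.
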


We consider the subsets $[r] = \{1, \dots, r\}$ and $[s'] = \{r+1, \dots, r+s\}$ 
as ordered $\Z_2$-graded subsets of $[r+s]$.
Suppose $\xi$ is a partition such that $\mu \subset \xi \subset \lambda$.
If $\mathfrak s \in \text{Tab}_{[r]}(\xi/\mu)$ and 
$\mathfrak t\in \text{Tab}_{[s']}(\lambda/\xi)$, then define the {\em double tableau}, 
$\mathfrak s| \mathfrak t \in \text{Tab}_{[r+s]}(\lambda/\mu)$, 
by setting
$$ (\mathfrak s| \mathfrak t )(i,j) =
\begin{cases}
\mathfrak s(i,j)& \text{if }\mu_i< j \leq \xi_i\\
\mathfrak t(i,j)& \text{if } \xi_i< j \leq \lambda_i
\end{cases}$$
for all $(i,j) \in \triangle_{\lambda/\mu}$.
Clearly if $\mathfrak{s,t}$ are row (resp. column) costandard, then
so is $\mathfrak{s|t}$.  

We will use notation similar to the previous section for the elements 
of $\Gamma_\Pi^{\lambda/\mu}(M\oplus N)$.  I.e., if $\mathfrak t \in \text{Tab}_{[r+s]}(\lambda/\mu)$,
then we write 
$$W_\Pi^{(\mathfrak t^i)} = (\Pi W)^{(w(\mathfrak t^i))},\ \text{ for }
1\leq i \leq q, \quad 
\text{ and }\quad
W_\Pi^{(\mathfrak t)} = W_\Pi^{(\mathfrak t^1)} \otimes \cdots \otimes W_\Pi^{(\mathfrak t^q)}$$
where $q=l(\lambda)$.
  If $\mathfrak{s} \in \text{Tab}_{[r]}(\xi/\mu)$ and 
$\mathfrak t\in \text{Tab}_{[s']}(\lambda/\xi)$, then let us write 
$  W_\Pi^{(\mathfrak s)} \coasterisk W_\Pi^{(\mathfrak t)} = 
\pm W_\Pi^{(\mathfrak s^1)} \coasterisk W_\Pi^{(\mathfrak t^1)}  \otimes \cdots 
\otimes W_\Pi^{(\mathfrak s^q)}\coasterisk W_\Pi^{(\mathfrak t^q)}$, 
where the coefficient $\pm 1$ is determined as usual by  the rule of signs. 
Then notice that the embedding
(\ref{eq:embedPi2}) sends 
$$ W_\Pi^{(\mathfrak s)} \otimes W_\Pi^{(\mathfrak t)} \mapsto 
  W_\Pi^{(\mathfrak s)} \coasterisk W_\Pi^{(\mathfrak t)}  = \pm \hpp  W_\Pi^{(\mathfrak{s|t})},$$
since $\text{Im}(\mathfrak s) \cap \text{Im}(\mathfrak t) = \emptyset$. 
It follows from our definitions that $W_\Pi^{(\mathfrak{s|t})} \in K^\xi(M,N)$.  
Furthermore, 
the set of all $W_\Pi^{(\mathfrak{s|t})}$ corresponding to row costandard 
$\mathfrak{s} \in \text{Tab}_{[r]}(\nu/\mu)$, $\mathfrak t\in \text{Tab}_{[s']}(\lambda/\nu)$, 
such that $\nu \preceq \xi$ (resp.~$\nu \prec \xi$), 
gives a basis for $K^\xi(M,N)$ (resp.~$\dot{K}^\xi(M,N)$).
\smallskip

\begin{proposition}
The canonical morphism 
$ \Gamma^{\xi/\mu}_\Pi\boxtimes \Gamma^{\lambda/\xi}_\Pi 
\to %\stackrel{\psi_\xi}{\longrightarrow}\ 
K^\xi$ induces a morphism
$$
\widehat{S}_{\xi/\mu} \boxtimes \widehat{S}_{\lambda/\xi} \hpp\to\hpp 
L_\xi\hp/\hp \dot{L}_\xi %\right.
$$
in the category $(\Pol^\I_d)_\ev$.
\end{proposition}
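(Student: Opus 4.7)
The canonical morphism $\Gamma^{\xi/\mu}_\Pi \boxtimes \Gamma^{\lambda/\xi}_\Pi \to K^\xi$ is realized bifunctorially by the $\coasterisk$-multiplication arising from the exponential property, sending a basis element $W_\Pi^{(\mathfrak{s})} \otimes W_\Pi^{(\mathfrak{t})}$ for row-costandard $\mathfrak s \in \text{Tab}_{[r]}(\xi/\mu)$ and $\mathfrak t \in \text{Tab}_{[s']}(\lambda/\xi)$ to $\pm W_\Pi^{(\mathfrak{s|t})} \in \Gamma^{\lambda/\mu}_\Pi(M \oplus N)$. Composing with the inclusion $K^\xi \hookrightarrow (\Gamma^{\lambda/\mu}_\Pi)^\mathsf{bi}$, then with the morphism induced by $\hat\theta_{\lambda/\mu}$, and finally with the quotient projection $L_\xi \twoheadrightarrow L_\xi/\dot L_\xi$ produces a morphism $\Phi$ of bisuperfunctors. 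The plan is to show that $\Phi$ factors through the canonical surjection $\Gamma^{\xi/\mu}_\Pi \boxtimes \Gamma^{\lambda/\xi}_\Pi \twoheadrightarrow \widehat{S}_{\xi/\mu} \boxtimes \widehat{S}_{\lambda/\xi}$, whose kernel equals $(\text{Im}\,\diamondsuit_{\xi/\mu}) \boxtimes \Gamma^{\lambda/\xi}_\Pi + \Gamma^{\xi/\mu}_\Pi \boxtimes (\text{Im}\,\diamondsuit_{\lambda/\xi})$ by Theorem \ref{thm:standard}.

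By the symmetric roles of the two tensor factors (each handled by the analogous argument with $M$ and $N$ swapped), it suffices to verify $\Phi$ vanishes on $(\text{Im}\,\diamondsuit_{\xi/\mu}) \boxtimes \Gamma^{\lambda/\xi}_\Pi$. Writing $\diamondsuit_{\xi/\mu}$ as a sum of the local operators $\diamondsuit_i(\xi/\mu, u, v; M)$ acting only on adjacent rows $i, i+1$, and invoking Lemma \ref{lem:factor} to split $\hat\theta_{\lambda/\mu}$ correspondingly into a tensor product whose central factor involves only these two rows, we reduce to verifying the claim on a single generator: for a standard basis element $z = Z_\Pi^{(\bs a, \bs b, \bs c)}$ in the source of $\diamondsuit_i$ and a row-costandard basis element $W_\Pi^{(\mathfrak t)} \in \Gamma^{\lambda/\xi}_\Pi(N)$, one must show that
\[
\hat\theta_{\lambda/\mu}(M \oplus N)\bigl(\diamondsuit_i(\xi/\mu, u, v; M)(z) \coasterisk W_\Pi^{(\mathfrak t)}\bigr) \in \dot L_\xi(M, N).
\]

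Unfolding the left-hand side using Lemma \ref{lem:multiply} and formula (\ref{eq:Delta}), the expression expands as a sum indexed by standardized splittings $\text{st}_\Zc(\bs b^1 \vee \bs b^2) = \bs b$ and by cosets for the place permutation, paralleling the proof of Proposition \ref{prop:kernel}. Each summand, after applying $\sigma_{\lambda/\mu}$ and multiplying into $S^{\lambda/\mu}$, corresponds to a tableau $\mathfrak u \in \text{Tab}_{[r+s]}(\lambda/\mu)$. The summands whose underlying tableau satisfies $\kappa(\mathfrak u) = \xi$---those in which all $M$-entries remain confined to the $\xi/\mu$-portion of rows $i, i+1$---are paired by the involution of Proposition \ref{prop:kernel} which swaps entries between the two rows; the additional $W_\Pi^{(\mathfrak t)}$-portion, whose entries live entirely in $N$ and occupy positions outside the $\xi/\mu$-columns being modified, is untouched by this involution, so the cancellation carries through verbatim. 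All remaining summands are represented by tableaux whose $M$-part is strictly smaller than $\xi$ in the lex order, and these lie in $\dot L_\xi$ by its definition via $\dot K^\xi$.

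The main obstacle I anticipate is the careful bookkeeping of sign factors and the verification that the pairwise cancellation of Proposition \ref{prop:kernel} truly extends without contamination to the bifunctor setting. In particular, $\sigma_{\lambda/\mu}$ interleaves columns contributed by $M$-entries from the $\xi/\mu$-portion with columns contributed by $N$-entries from the $\lambda/\xi$-portion, so one must track precisely how $\sigma_{\lambda/\mu}$ decomposes relative to $\sigma_{\xi/\mu}$ and $\sigma_{\lambda/\xi}$ up to rearrangements landing in $\dot L_\xi$, and confirm that the relevant involution pairs tensors whose sign factors $\text{sgn}_{\Zc_\pi}$ agree exactly on the $M$-entries so that their contributions cancel. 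Once this bookkeeping is established, the induced factorization through $\widehat{S}_{\xi/\mu} \boxtimes \widehat{S}_{\lambda/\xi}$ follows at once.
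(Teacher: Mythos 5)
Your high-level strategy (reduce via Lemma \ref{lem:factor} to a local two-row computation, then show $\hat\theta_{\lambda/\mu}$ of the image of a local $\diamondsuit_i$-operator lands in $\dot L_\xi$) is the same as the paper's. Where you diverge is in how you carry out the two-row computation, and your version has a genuine gap.

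You propose to expand $\hat\theta_{\lambda/\mu}(M\oplus N)\bigl(\diamondsuit_i(\xi/\mu,u,v;M)(z)\coasterisk W_\Pi^{(\mathfrak t)}\bigr)$ directly and argue that the summands with $\kappa(\mathfrak u)=\xi$ cancel by the involution of Proposition \ref{prop:kernel}, while the rest have $\kappa(\mathfrak u)\prec\xi$ and so lie in $\dot L_\xi$. But this dichotomy is vacuous: $\diamondsuit_i(\xi/\mu,u,v;M)$ fixes the number of $M$-entries in each of rows $i$ and $i+1$ (they are always $\xi_i-\mu_i$ and $\xi_{i+1}-\mu_{i+1}$), and $\coasterisk W_\Pi^{(\mathfrak t)}$ only adjoins $N$-entries; so \emph{every} summand in your expansion has $\kappa(\mathfrak u)=\xi$. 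There are no ``remaining summands'' with smaller $M$-part, and the cancellation would have to be total. It is not total, and the involution of Proposition \ref{prop:kernel} does not apply verbatim: that involution needs a column of the two-row tableau in which both entries come from $\bs b$, and its existence is guaranteed by $u+v<\lambda_{i+1}-\mu_i$ when the \emph{only} non-$\bs b$ entries are $\bs a$ and $\bs c$. Once you adjoin the $N$-entries $\mathfrak t^i,\mathfrak t^{i+1}$, the number of non-$\bs b$ entries in the two rows rises to $u+(\lambda_i-\xi_i)$ and $v+(\lambda_{i+1}-\xi_{i+1})$, and the needed pigeonhole inequality becomes $u+v<\xi_i+\xi_{i+1}-\lambda_i-\mu_i$, which is strictly stronger than the hypothesis $u+v<\xi_{i+1}-\mu_i$ whenever $\xi_i<\lambda_i$. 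So the involution can be undefined on some summands, and the cancellation breaks down.

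The paper sidesteps this entirely. Instead of re-running the cancellation with the extra $N$-entries present, it \emph{lifts} $\diamondsuit_i(\xi/\mu,u,v;M)(z)\coasterisk W_\Pi^{(\mathfrak t)}$ to an honest instance of the bigger operator: it sets $W'=W_\Pi^{(\bs a)}\otimes\bigl(W_\Pi^{(\bs b)}\coasterisk W_\Pi^{(\mathfrak t^i)}\bigr)\otimes\bigl(W_\Pi^{(\bs c)}\coasterisk W_\Pi^{(\mathfrak t^{i+1})}\bigr)$ and compares with $\diamondsuit_i\bigl(\lambda/\mu,u,\lambda_{i+1}-\xi_{i+1}+v;M\oplus N\bigr)(W')$. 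The shift $v\mapsto\lambda_{i+1}-\xi_{i+1}+v$ is exactly what makes the constraint $u+v'<\lambda_{i+1}-\mu_i$ equivalent to $u+v<\xi_{i+1}-\mu_i$, so Proposition \ref{prop:kernel} applies to the lift directly, and $\hat\theta_{\lambda/\mu}$ annihilates it. The difference between the lift and your original element consists of summands in which some of $\mathfrak t^i$ is moved to row $i+1$, and the paper argues these lie in $\dot K^\xi(M,N)$; so $\hat\theta_{\lambda/\mu}$ sends the original element into $\dot L_\xi$. This route reuses Proposition \ref{prop:kernel} as a black box and avoids the sign bookkeeping you flag as the main obstacle. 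To repair your argument along your own lines, you would have to enlarge the pool the involution is allowed to swap from (i.e.\ include $\mathfrak t^i$), which is in effect what the paper's lift does, and then separately account for the summands that move $N$-entries between rows.
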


\begin{proof}
For each pair $M,N \in \svec$, we must show that there exists a map, $\psi=\psi_\xi(M,N)$, which makes the following diagram commute:
$$\xymatrix{
\Gamma_\Pi^{\xi/\mu}M \otimes \Gamma_\Pi^{\lambda/\xi}N\ 
\ar[d]_-{\hat{\theta}_{\xi/\mu}\boxtimes\hpp \hat{\theta}_{\lambda/\xi}} \ar[r]^-{-\coasterisk-}
%:
&\ K^\xi(M,N) \ar[d]^-{\hat{\theta}_{\lambda/\mu}(\cdot\hpp \oplus\hpp \cdot)}\\
\widehat{S}_{\xi/\mu}M \otimes \widehat{S}_{\lambda/\xi}N\  \ar@{.>}[r]_-{\psi}  &\ 
L_\xi(M,N)/ \dot{L}_\xi(M,N)}
$$
We may use Theorem \ref{thm:standard} to identify $\widehat{S}_{\xi/\mu}M$ with the 
cokernel of the map $\diamondsuit_{\xi/\mu}(M)$,
and $\widehat{S}_{\lambda/\xi}N$ with the cokernel of $\diamondsuit_{\lambda/\xi}(N)$.   
In order to prove the existence of $\psi$ it thus suffices to  show that % the image of
$$ \text{Im}(\diamondsuit_{\lambda/\xi}(M))\coasterisk \Gamma_\Pi^{\xi/\mu}(N)\ +\ 
\Gamma_\Pi^{\lambda/\xi}(M)\coasterisk \text{Im}(\diamondsuit_{\xi/\mu}(N))$$
%in the upper horizontal map 
is contained in 
$$J = \dot{K}^\xi(M, N) + \text{Im}(\diamondsuit_{\lambda/\mu}(M\oplus N)).$$
%which is in the kernel of $\hat{\theta}_{\lambda/\mu}(M\oplus N)$.

We first show that 
$\text{Im}(\diamondsuit_{\xi/\mu}(M))\coasterisk \Gamma_\Pi^{\lambda/\xi}(N)$ is contained in $J$.
Recall that the map 
$$\diamondsuit_{\xi/\mu}(M)= \sum_{i=1}^{q-1}\sum_{u,v} 
1 \otimes \cdots \otimes \diamondsuit_i(\xi/\mu, u, v;M) \otimes\cdots \otimes 1,$$
where $q=l(\xi)$ and $u,v\geq 0$ are such that $u+v < \xi_{i+1}-\mu_i$.  
Let us fix such a triple $i,u,v$.
Then choose any tableaux  $\mathfrak t \in \text{Tab}_{[s']}(\lambda/\xi)$, 
$$\mathfrak r \in \text{Tab}_{[r]}((\xi_1,\dots, \xi_{i-1})/(\mu_1, \dots, \mu_{i-1}))\ \text{ and }\ 
\mathfrak s \in \text{Tab}_{[r]}((\xi_{i+2},\dots, \xi_q)/(\mu_{i+2}, \dots, \mu_q)).$$
We also consider a basis element $W_\Pi^{(\bs a, \bs b, \bs c)}$ of $\Gamma_\Pi^{u, p_1+p_2-u-v, v}M$,
where $\bs a, \bs b, \bs c \in I(m|n)$ are such that 
$l(\bs a) = u$, $l(\bs b) = p_1+p_2-u-v$ and $l(\bs c) =v$, with $p_1 = \xi_i-\mu_i$ and $p_2= \xi_{i+1}-\mu_{i+1}$.
Now let us write $W =  W_\Pi^{(\mathfrak r)} \otimes W_\Pi^{(\bs a, \bs b, \bs c)} \otimes W_\Pi^{(\mathfrak s)}$.  
Then  we have
$$ 
\diamondsuit_{\xi/\mu}(W) = W_\Pi^{(\mathfrak r)} 
\otimes \diamondsuit_1(\xi/\mu,u,v;M)(W_\Pi^{(\bs a, \bs b, \bs c)}) 
\otimes W_\Pi^{(\mathfrak s)},$$
and it suffices to show that  $\diamondsuit_{\xi/\mu}(W)\coasterisk W_\Pi^{(\mathfrak t)}$ belongs to $J$.

Next, let us consider  the action of 
$\diamondsuit_{\lambda/\mu}(M\oplus N)$. %on the element
Let us write $\mathfrak t_1 = (\mathfrak t^1, \dots, \mathfrak t^{i-1})$, 
$\bar{\mathfrak t} = (\mathfrak t^i, \mathfrak t^{i+1})$, and 
 $\mathfrak t_2 = (\mathfrak t^{i+2}, \dots, \mathfrak t^q)$. Also consider the element
\[
W' = 
W_\Pi^{(\bs a)} \otimes (W_\Pi^{(\bs b)}\coasterisk 
W_\Pi^{({\mathfrak t}^i)}) \otimes (W_\Pi^{(\bs c)} \coasterisk W_\Pi^{({\mathfrak t}^{i+1})}).
\]
Then we have 
\begin{align*}
\diamondsuit_{\lambda/\mu}(M\oplus N) &\left( ( W_\Pi^{(\mathfrak r)} \coasterisk W_\Pi^{(\mathfrak t_1)})  \otimes W' \otimes  (W_\Pi^{(\mathfrak s)} \coasterisk W_\Pi^{(\mathfrak t_2)})\right) = \\[.18cm]
& ( W_\Pi^{(\mathfrak r)} \coasterisk W_\Pi^{(\mathfrak t_1)})\hpp  \otimes\hpp
\diamondsuit_i\big(\lambda/\mu,\hpp u,\hpp \lambda_{i+1}-\xi_{i+1}+v;M\oplus N\big) (W')  
 \hpp\otimes\hpp  (W_\Pi^{(\mathfrak s)} \coasterisk W_\Pi^{(\mathfrak t_2)}),
\end{align*}
and 
\begin{align}
\diamondsuit_i\big(\lambda/\mu,\hpp u,\hpp \lambda_{i+1}-\xi_{i+1}+v;M\oplus N\big) (W')  &=\nonumber\\[.18cm]
\sum_{\tilde{\bs b}^1, \tilde{\bs b}^2}
\text{sgn}_{\Zc_\pi}(\tilde{\bs b}^1, \tilde{\bs b}^2)\hpp & ( W_\Pi^{(\bs a)}\coasterisk W_\Pi^{(\tilde{\bs b}^1)}) 
 \otimes (W_\Pi^{(\tilde{\bs b}^2)} \coasterisk W_\Pi^{(\bs c)} ), \label{eq:sum1}
\end{align}
where the sum is taken over all pairs of standardized $\tilde{\bs b}^1 \in I(m+m'|n+n', \lambda_i-\mu_i-u)$ and  
$\tilde{\bs b}^2 \in I(m+m'|n+n',\xi_{i+1}-\mu_{i+1}-v)$ such that   $\text{st}(\tilde{\bs b}^1\vee \tilde{\bs b}^2)= \bs b\vee \mathfrak t^i$.
%\medskip

On the other hand, we notice that
\begin{align*}
\pm\hpp\diamondsuit_{\xi/\mu}(W) \coasterisk W_\Pi^{(\mathfrak t)}&=\\[.18cm]
 (W_\Pi^{(\mathfrak r)} &\coasterisk W_\Pi^{(\mathfrak t_1)}) \ \otimes\ 
(\diamondsuit_i(\xi/\mu,u,v;M)(W_\Pi^{(\bs a, \bs b, \bs c)}) \coasterisk W_\Pi^{(\bar{\mathfrak t})})\ 
 \otimes\ (W_\Pi^{(\mathfrak s)} \coasterisk W_\Pi^{(\mathfrak t_2)}),
 \end{align*}
and
\begin{align}
\pm\hpp \diamondsuit_i(\xi/\mu,u,v;M)(W_\Pi^{(\bs a, \bs b, \bs c)}) \coasterisk W_\Pi^{(\bar{\mathfrak t})} &=\nonumber\\[.18cm]
 \sum_{\bs b^1, \bs b^2} \text{sgn}_{\Zc_\pi}(\bs b^1, \bs b^2)\hpp & ( W_\Pi^{(\bs a)}\coasterisk W_\Pi^{(\bs b^1)} \coasterisk W_\Pi^{(\mathfrak t^i)} )
 \otimes (W_\Pi^{(\bs b^2)} \coasterisk W_\Pi^{(\bs c)}\coasterisk W_\Pi^{(\mathfrak t^{i+1})} ) \label{eq:sum2}
\end{align}
where the sum is taken over all pairs of  standardized  $\bs b^1 \in I(m|n,\xi_i-\mu_i-u)$, $\bs b^2 \in I(m|n,\xi_{i+1}-\mu_{i+1}-v)$
such that $\text{st}(\bs b^1\vee \bs b^2)= \bs b$.
\smallskip

Comparing the above sums, we see that every summand in (\ref{eq:sum2}), corresponding to 
some pair ($\bs b^1$, $\bs b^2$), also appears with the same sign in (\ref{eq:sum1}) as the summand corresponding
to the pair ($\tilde{\bs b}^1$, $\tilde{\bs b}^2$), with 
$\tilde{\bs b}^1 = \bs b^1 \vee \mathfrak t^i$ and $\tilde{\bs b}^2 = \bs b^2$. Furthermore, 
any other summand corresponding to 
($\tilde{\bs b}^1$, $\tilde{\bs b}^2$)
which is not of the above form  involves moving elements of $\mathfrak t^i$ from the $i$-th row
of $\lambda/\mu$ to the $(i+1)$-st row. Such a summand must  belong to $\dot{K}^\xi(M,N)$.
This completes the  proof that 
$\text{Im}(\diamondsuit_{\lambda/\xi}(M))\coasterisk \Gamma_\Pi^{\xi/\mu}(N)$ belongs 
to $J$. 
The proof that 
$\Gamma_\Pi^{\lambda/\xi}(M)\coasterisk \text{Im}(\diamondsuit_{\xi/\mu}(N)) \subset J$ 
is entirely symmetric.
\end{proof}

\begin{theorem}\label{thm:filtration}
For all pairs $(M,N)\in \mathsf\Gamma^d_{\M\times\M}$, the canonical map 
$$\psi_\xi :  \widehat{S}_{\xi/\mu}(M)\otimes \widehat{S}_{\lambda/\xi}(N)
\longrightarrow
L_\xi(M, N)\hp /\hp  
\dot{L}_\xi\big(M, N)$$ %\right.$$
 is an isomorphism.  Hence, the bisuperfunctors 
$\{L_\xi:\ \mu\subset \xi\subset \lambda\}$
give a filtration of $(\widehat{S}_{\lambda/\mu})^\mathsf{bi}$ whose associated graded object is isomorphic
to 
$$\bigoplus_{\mu\subset \xi\subset \lambda} \widehat{S}_{\xi/\mu}\boxtimes 
\widehat{S}_{\lambda/\xi}.$$
\end{theorem}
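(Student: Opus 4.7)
The surjectivity of $\psi_\xi$ is immediate from its construction: the quotient $L_\xi/\dot L_\xi$ is spanned by the cosets of $\hat\theta_{\lambda/\mu}(x\coasterisk y)$ for $x \in \Gamma_\Pi^{\xi/\mu}(M)$, $y\in \Gamma_\Pi^{\lambda/\xi}(N)$, which is precisely the image of $\widehat{S}_{\xi/\mu}(M)\otimes \widehat{S}_{\lambda/\xi}(N)$ under $\psi_\xi$. The injectivity, which is the content of the theorem, I propose to establish by a dimension count grounded in Theorem~\ref{thm:standard}.

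First I will verify that the $L_\xi$ assemble into an exhaustive filtration. Enumerating the intermediate partitions in the total order $\preceq$ as $\mu = \xi_0 \prec \xi_1 \prec \cdots \prec \xi_N = \lambda$, one has $\dot L_{\xi_i} = L_{\xi_{i-1}}$. The exponential property~\eqref{eq:exponential} shows that every basis element $W_\Pi^{(\mathfrak u)}$ of $\Gamma_\Pi^{\lambda/\mu}(M\oplus N)$ admits a double-tableau factorization $W_\Pi^{(\mathfrak u)} = \pm\, W_\Pi^{(\mathfrak s)}\coasterisk W_\Pi^{(\mathfrak t)}$ with $\mathfrak u = \mathfrak s|\mathfrak t$, so $\sum_\xi K^\xi = (\Gamma_\Pi^{\lambda/\mu})^{\mathsf{bi}}$ and therefore $L_{\xi_N} = \widehat{S}_{\lambda/\mu}(M\oplus N)$. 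In particular
\[
\dim \widehat{S}_{\lambda/\mu}(M\oplus N) \;=\; \sum_{i=0}^{N} \dim\bigl(L_{\xi_i}/\dot L_{\xi_i}\bigr).
\]

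Next I will invoke Theorem~\ref{thm:standard} together with a bijection of tableaux. Bases of $\widehat{S}_{\lambda/\mu}(M\oplus N)$, $\widehat{S}_{\xi/\mu}(M)$ and $\widehat{S}_{\lambda/\xi}(N)$ are indexed by costandard tableaux with entries in $[r+s]$, $[r]$ and $[s']$, respectively. The assignment $(\mathfrak s,\mathfrak t)\mapsto \mathfrak s|\mathfrak t$ is a bijection between pairs of costandard tableaux of shapes $\xi/\mu$ and $\lambda/\xi$ and costandard tableaux $\mathfrak u$ of shape $\lambda/\mu$ satisfying $\kappa(\mathfrak u) = \xi$; its inverse separates $\mathfrak u$ into its $[r]$- and $[s']$-entries row by row, and both row- and column-costandardness are inherited by each half because $[r]$ consists of the smallest entries of $[r+s]$ and the $\Z_2$-grading on $[r+s]$ restricts to the given gradings on $[r]$ and $[s']$. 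Summing this bijection over $\xi$ produces
\[
\dim \widehat{S}_{\lambda/\mu}(M\oplus N) \;=\; \sum_\xi \dim\bigl(\widehat{S}_{\xi/\mu}(M)\otimes \widehat{S}_{\lambda/\xi}(N)\bigr).
\]

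Finally, the surjectivity of each $\psi_\xi$ supplies $\dim(L_{\xi}/\dot L_{\xi}) \leq \dim\bigl(\widehat{S}_{\xi/\mu}(M)\otimes \widehat{S}_{\lambda/\xi}(N)\bigr)$; combined with the two displayed totals this forces equality term by term, so each $\psi_\xi$ is injective, hence an isomorphism, and the decomposition of the associated graded bisuperfunctor follows at once. The principal technical obstacle will be the tableau bijection in the third paragraph: one must carefully track how the signed row- and column-costandardness conditions interact with the $\Z_2$-grading under the split, so that splitting a costandard $\mathfrak u$ into its small- and large-entry halves yields a legitimate pair of costandard tableaux with no spurious repeats appearing in rows or columns.
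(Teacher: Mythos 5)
Your proof is correct and rests on the same combinatorial core as the paper's, namely the double-tableau bijection between costandard $\mathfrak t \in \text{Tab}_{[r+s]}(\lambda/\mu)$ with $\kappa(\mathfrak t)=\xi$ and pairs $(\mathfrak t(1),\mathfrak t(2))$ of costandard tableaux of shapes $\xi/\mu$ and $\lambda/\xi$, together with the factorization $W_\Pi^{(\mathfrak t)}=\pm\, W_\Pi^{(\mathfrak t(1))}\coasterisk W_\Pi^{(\mathfrak t(2))}$ --- but it packages the bookkeeping differently. The paper's proof simply asserts that the vectors $\hat{\theta}_{\lambda/\mu}(W_\Pi^{(\mathfrak t)})$ indexed by costandard $\mathfrak t$ with $\kappa(\mathfrak t)\preceq\xi$ (resp.~$\prec\xi$) form a basis of $L_\xi(M,N)$ (resp.~$\dot{L}_\xi(M,N)$) and then notes that $\psi_\xi$ carries a basis to a basis. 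Linear independence of those vectors is immediate from Theorem~\ref{thm:standard}, but the spanning direction --- that nothing with $\kappa\succ\xi$ contributes to $L_\xi$ --- is not obvious from the Straightening Law alone: Lemma~\ref{lem:straighten} together with Lemma~\ref{lem:abw} only says that straightening a row costandard tableau produces costandard terms with $\kappa$ at least as large in the order $\succeq$, which could a priori overshoot $\xi$. Your global dimension count (telescoping the exhaustive filtration, equating the two totals via the tableau bijection, and squeezing with the surjectivity of $\psi_\xi$ from the proposition immediately preceding the theorem) is exactly what makes the paper's basis assertion rigorous, so your argument is a more self-contained version of essentially the same proof. The concern you flag about the bijection preserving costandardness is the right instinct, and it resolves as you sketch: $[r]$ is an initial, $\Z_2$-grading-compatible segment of $[r+s]$, and since $\kappa(\mathfrak t)$ is a partition, no column of a costandard $\mathfrak t$ ever returns from an $[s']$-entry to an $[r]$-entry as one descends, so both splitting and gluing preserve row- and column-costandardness.
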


\begin{proof}
First we notice that any
costandard $\mathfrak t \in \text{Tab}_{[r+s]}$ can be decomposed into a 
double tableau $\mathfrak{t} = \mathfrak t(1)|\mathfrak t(2)$, where 
  $\mathfrak t(1)$ is of shape $\kappa(\mathfrak t)/\mu$ with elements belonging to $[r]$
  and $\mathfrak t(2)$ is the 
tableau of shape $\lambda/\kappa(\mathfrak t)$ obtained by restricting to the entries from the set 
$[s']$.
It then
follows from Theorem \ref{thm:standard} that 
the set
$$\{\hat{\theta}_{\lambda/\mu}(W_\Pi^{(\mathfrak t)}) : \mathfrak t\in \text{Tab}_{[r+s]}(\lambda/\mu)
\text{ is costandard, and } \kappa(\mathfrak t) \preceq \xi \text{ (resp.\ } \kappa(\mathfrak t)\prec \xi)\}$$ 
gives a basis of $L_\xi(M,N)$ (resp.~$\dot{L}_\xi(M,N)$).
The theorem then follows from the fact that 
$W_\Pi^{(\mathfrak t)} = \pm\hpp  W_\Pi^{(\mathfrak t(1))} \coasterisk W_\Pi^{(\mathfrak t(2))}$.
\end{proof}

Notice that the functor $\text{Res}_{\C,d}: \Pol^\I_d \to \Pol^\II_d$ is an exact 
functor on the underlying even subcategories.
We thus have the following consequence of Theorem \ref{thm:filtration}.

\begin{corollary}\label{cor:filtration}
For each $\mu\subset \xi\subset \lambda$, let 
us define the bisuperfunctor $L^\II_\xi = \text{Res}_{\C,d}(L_\xi)$ in 
the category $\Pol^\II_d$.  Then  the set of all such 
$L^\II_\xi$ %: \mu\subset \xi\subset \lambda\}$
gives a filtration of $(\widehat{S}^\II_{\lambda/\mu})^\mathsf{bi}$ whose 
associated graded object is isomorphic to 
$$
\bigoplus_{\mu\subset \xi\subset \lambda} \widehat{S}^{\hp \II}_{\xi/\mu}\boxtimes 
\widehat{S}^{\hp \II}_{\lambda/\xi}.
$$
\end{corollary}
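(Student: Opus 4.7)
The plan is to simply apply the restriction functor to the filtration obtained in Theorem \ref{thm:filtration}, using its exactness. First I would note that the forgetful functor $F_{\C,d}$ has an obvious ``bisuperfunctor'' analogue, namely a functor $F^{\mathsf{bi}}_{\C,d}:\mathsf\Gamma^d_{\Q\times\Q}\to\mathsf\Gamma^d_{\M\times\M}$ sending a pair $(V,W)$ to the pair of underlying superspaces (and acting on morphisms by forgetting the $\C_1$-actions component by component). Precomposition with $F^{\mathsf{bi}}_{\C,d}$ induces a restriction functor $\text{Res}^{\mathsf{bi}}_{\C,d}:\mathsf{biPol}^\I_d\to\mathsf{biPol}^\II_d$ which, like $\text{Res}_{\C,d}$, is exact on the underlying even subcategories (being defined by precomposition).

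Next, I would verify two compatibilities. First, $\text{Res}^{\mathsf{bi}}_{\C,d}$ commutes with the $(-)^{\mathsf{bi}}$ construction: for any $T\in \Pol^\I_d$ and $(V,W)\in \mathsf\Gamma^d_{\Q\times\Q}$ one has
$$\text{Res}^{\mathsf{bi}}_{\C,d}(T^{\mathsf{bi}})(V,W)=T(F_{\C,d}(V)\oplus F_{\C,d}(W))=\text{Res}_{\C,d}(T)^{\mathsf{bi}}(V,W),$$
since $F_{\C,d}$ preserves direct sums. In particular $\text{Res}^{\mathsf{bi}}_{\C,d}((\widehat{S}_{\lambda/\mu})^{\mathsf{bi}})=(\widehat{S}^{\hp\II}_{\lambda/\mu})^{\mathsf{bi}}$. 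Second, $\text{Res}^{\mathsf{bi}}_{\C,d}$ commutes with the external tensor product $\boxtimes$: for $S\in\Pol^\I_e$, $T\in\Pol^\I_f$,
$$\text{Res}^{\mathsf{bi}}_{\C,d}(S\boxtimes T)(V,W)=S(F_{\C,e}V)\otimes T(F_{\C,f}W)=(\text{Res}_{\C,e}S\boxtimes\text{Res}_{\C,f}T)(V,W),$$
again because the formula for $\boxtimes$ on objects and morphisms only sees the underlying superspace structure of each component.

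Given these compatibilities, I would apply $\text{Res}^{\mathsf{bi}}_{\C,d}$ to the chain of subobjects $L_\xi\hookrightarrow(\widehat{S}_{\lambda/\mu})^{\mathsf{bi}}$ in $(\Pol^\I_d)_\ev$. By exactness of $\text{Res}^{\mathsf{bi}}_{\C,d}$, the resulting maps $L^\II_\xi=\text{Res}^{\mathsf{bi}}_{\C,d}(L_\xi)\hookrightarrow(\widehat{S}^{\hp\II}_{\lambda/\mu})^{\mathsf{bi}}$ remain monomorphisms with the same inclusion relations among the $L^\II_\xi$ as among the $L_\xi$; hence $\{L^\II_\xi:\mu\subset\xi\subset\lambda\}$ forms a filtration of $(\widehat{S}^{\hp\II}_{\lambda/\mu})^{\mathsf{bi}}$. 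Exactness also ensures that $\text{Res}^{\mathsf{bi}}_{\C,d}(L_\xi/\dot{L}_\xi)\simeq L^\II_\xi/\dot{L}^\II_\xi$. Applying $\text{Res}^{\mathsf{bi}}_{\C,d}$ to the isomorphisms $\psi_\xi$ of Theorem \ref{thm:filtration} and using the commutation with $\boxtimes$ then yields isomorphisms
$$\widehat{S}^{\hp\II}_{\xi/\mu}\boxtimes\widehat{S}^{\hp\II}_{\lambda/\xi}\;\stackrel{\sim}{\longrightarrow}\;L^\II_\xi/\dot{L}^\II_\xi$$
in $(\Pol^\II_d)_\ev$, giving the claimed description of the associated graded.

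The proof is essentially a bookkeeping exercise, so there is no real obstacle; the only thing that requires care is to check that the bifunctor version $\text{Res}^{\mathsf{bi}}_{\C,d}$ is the ``right'' extension, i.e.\ that it is compatible with both $(-)^{\mathsf{bi}}$ and $\boxtimes$ as above. Once these compatibilities are recorded, the statement follows at once by functoriality.
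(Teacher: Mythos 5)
Your proposal is correct and is essentially the same argument the paper intends: the paper simply remarks that $\text{Res}_{\C,d}$ is exact on the even subcategories and declares the corollary a consequence of Theorem \ref{thm:filtration}, leaving the bifunctor compatibilities you spell out (with $(-)^{\mathsf{bi}}$ and $\boxtimes$) implicit. Your write-up fills in exactly those routine verifications.
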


%Sec
\section{Highest Weight Theory}\label{Sec:highest}

The main purpose of this section is to show that the Schur
superfunctors $\widehat{S}^{\I}_\lambda= \widehat{S}_\lambda$ 
are indecomposable objects of the category $\Pol^\I_d$. 
It will be convenient to work with the {Schur superalgebra} 
$S(m|n,d)$ for $m,n,d \in \Z_{\geq 0}$.
%
%On the other hand, 
%the Schur superfunctor, $\widehat{S}^{\hp \II}_{\lambda}$, is 
%usually not an indecomposable object of the category $\Pol^\II_d$.
%
We also describe formal characters of 
$\widehat{S}^{\I}_\lambda$ and $\widehat{S}^{\II}_\lambda$.
For this, we will need to work with both Schur superalgebras 
$S(m|n,d)$ and $Q(n,d)$, respectively.

%ss
\subsection{The Schur superalgebra $S(m|n,d)$}

Suppose $m,n,d$ are nonnegative integers, and let $M=\k^{m|n}$.  
 We choose bases $\Xc= (X_1, \dots, X_m)$ of $M_0$
and $\Yc= (Y_1, \dots, Y_n)$ of $M_1$.  Let $\Zc = \Xc\sqcup \Yc$ be the 
$\Z_2$-graded set with $\Zc_0 =\Xc$, $\Zc_1=\Yc$ and with total order
$$X_1< \cdots < X_m < Y_1< \cdots< Y_n.$$ 
In this case, we we write 
$\text{Tab}_{m|n}(\lambda/\mu) = \text{Tab}_\Zc(\lambda/\mu)$ if 
$\mu \subset \lambda$ are any partitions. 
We also write
$Z_i = X_i$ and $Z_{m+j} = Y_j$ for $1\leq i\leq m$ and $1\leq j\leq n$.  

Let $E_{i,j} \in \End(M)$ denote the element such that $E_{i,j} Z_k = \delta_{j,k} Z_i$
for $k = 1,\dots, m+n$.  Then  $\{E_{i,j}:\ 1\leq i,j \leq m+n\}$ gives a basis  of $\End(M)$, which
we order lexicographically.
We define the the {\em Schur superalgebra} $S(m|n,d)$ to be the associative
superalgebra
$$ S(m|n,d) = \Gamma^d\End(M) \cong \End_{\k \Si_d}(M^{\otimes d}).$$
Denote the multiplication of $x,y\in S(m|n,d)$ by $x\circ y$.  
\smallskip

Recall that the symmetric group $\Si_d$ acts on  $I(m|n,d)$ by
composition.   Given $\bs i, \bs j\in I(m|n,d)$, let us write 
$\bs i \sim \bs j$ if $\bs j = \bs i.\sigma$, for some $\sigma \in \Si_d$.
We also let $\Si_d$ act on the set 
$I(m|n,d)\times I(m|n,d)$ via the diagonal action.
We again write $\sim$ denote the equivalence relation corresponding to the orbits of
this action. E.g., if $\bs i \sim \bs k$ and 
$\bs j \sim \bs l$, then $(\bs i, \bs j) \sim (\bs k, \bs l)$, but not vice versa. 
Let $\Omega(m|n,d)$  denote a set of $\Si_n$-orbit representatives in $I(m|n,d)\times I(m|n,d)$.
\smallskip

Following \cite{BrKu}, we say that  
$(\bs i, \bs j) \in I(m|n,d) \times I(m|n,d)$ 
is {\em strict} if: $(i_k,j_k) \neq (i_l,j_l)$ whenever both $k\neq l$ and $|i_k| + |j_k| = |i_l|+|j_l|=1$.
Let $I^2(m|n,d)$ denote the set of all such strict pairs. % $(\bs{i,j})$.
Then the diagonal action of $\Si_d$ restricts to an action on $I^2(m|n,d)$.
Notice that $|\bs i|_\Zc + |\bs j|_\Zc = |\bs i|_{\Zc_\pi} + |\bs j|_{\Zc_\pi}$,
for any $\bs i, \bs j \in I(m|n,d)$.  
\smallskip

We denote by $E^{(\bs i,\bs j)}\in S(m|n,d)=\Gamma^d \End(M)$ the element which is dual
to the monomial
$$\prod_{k,l=1}^{m+n} \check{E}_{i_k,j_l} \in S^d(\End(M)^*),$$
where $\{\check{E}_{i,j}\}$ denotes the dual basis of $\{E_{i,j}\}$.
%
%Analogous to Section \ref{ss:divided}, we say that $(\bs i, \bs j)\in I^2(m|n,d)$ is {\em standardized} if the
%entries of the corresponding sequence $((i_1, j_1), (i_1,j_2), \dots, (i_d, j_d))$ are non-decreasing
%(with respect to the lexicographic order).  
Then the set
$$ \{E^{(\bs i,\bs j)} : (\bs i,\bs j) \in \Omega(m|n,d)\cap I^2(m|n,d)\}$$
gives a basis of $S(m|n,d)$.
\smallskip

%%%%%%
Given any $\varepsilon, \delta \in (\Z_2)^d$, we define the {\em charge} :
%For any $\bs i, \bs j \in I(m|n,d)$, we also define 
$$\text{chr}(\varepsilon, \delta) = \prod_{1\leq s< t\leq d} \hpp (-1)^{\varepsilon_s \delta_t}.$$
%Then, if $\bs i, \bs j\in I(m|n,d)$, we let $\text{chr}(\bs i; \bs j) = $
%%%%%%
Now 
suppose  given $(\bs i,\bs j), (\bs k,\bs l)\in I^2(m|n,d)$.  Then we define: 
$$ 
\text{chr}(\bs i, \bs j; \bs k) =  \text{chr}(|\bs i|_\Zc + |\bs j|_\Zc, 
|\bs k|_\Zc),\qquad % \text{and}\quad
\text{chr}_\pi(\bs i , \bs j; \bs k) =  \text{chr}(|\bs i|_\Zc + |\bs j|_\Zc, 
|\bs k|_{\Zc_\pi})$$
and
$$
\text{chr}(\bs i, \bs j; \bs k, \bs l) =  \text{chr}(|\bs i|_\Zc + |\bs j|_\Zc, 
|\bs k|_\Zc + |\bs l |_\Zc).$$
If $(\bs i,\bs j) \sim (\bs k,\bs l)$, then  we also define 
$$\text{sgn}(\bs i,\bs j; \bs k,\bs l) = \text{sgn}(|\bs i|_\Zc + |\bs j|_\Zc, \sigma)$$
where $\sigma \in \Si_d$ is any permutation such that 
$(\bs i, \bs j) = (\bs k.\sigma, \bs l.\sigma)$.
We may also consider the elements 
$E^{\bs i,\bs j}  = E^{i_1, j_1}\otimes E^{i_2, j_2} \otimes \cdots \otimes E^{i_d,j_d}
 \in \End(M)^{\otimes d} = \End(M^{\otimes d})$, where the action is given by the rule
 of signs.  Notice that we have
\begin{equation}\label{eq:tensor3}
E^{\bs i,\bs j} Z^{\bs k} = \text{chr}(\bs i,\bs j; \bs k ) \delta_{\bs j,\bs k} Z^{\bs i}
\quad \text{ and } \quad
E^{\bs i,\bs j} Z_\Pi^{\bs k} = \text{chr}_\pi(\bs i,\bs j; \bs k) \delta_{\bs j,\bs k} Z_\Pi^{\bs i}.
\end{equation}
Recall from Lemma \ref{lem:Ax} that there is an isomorphism 
$$\Gamma^d\End(M)=S(m|n,d) \stackrel{\sim}{\to} (\End(M)^{\otimes d})^{\Si_d}= \End_{\k\Si_d}(M^{\otimes d}),$$
which shows that the corresponding embedding $\Delta: S(m|n,d) \hookrightarrow \End(M^{\otimes d})$
is faithful.  %Suppose that $(\bs i, \bs j)$ is standardized.  
This  embedding is given explicitly by the equation
\begin{equation}\label{eq:tensor4}
\Delta(E^{(\bs i,\bs j)}) %= \sum_{\sigma \in \Si_d/\Si_{\bs i,\bs j}} E^{\mathbf i,\mathbf j}.\sigma
= \sum_{(\bs s,\bs t)\sim (\bs i,\bs j)} \text{sgn}(\bs i,\bs j; \bs s,\bs t) E^{\bs s,\bs t},
\end{equation}
which holds for any  $(\bs i, \bs j) \in I^2(m|n,d)$.
%where $\Si_{\bs i,\bs j} = \Si_{wt(\bs i,\bs j)}$. 

Now there is a canonical (even) isomorphism of superalgebras $\End(M) \cong \End(\Pi M)$.  It follows
that there is also a faithful embedding 
$$ S(m|n,d) \hookrightarrow \End(\Pi M)^{\otimes d} = \End((\Pi M)^{\otimes d}).$$
The superspaces $M^{\otimes d}$ and $(\Pi M)^{\otimes d}$ are thus naturally $S(m|n,d)$-supermodules.
The following lemma describes these actions (cf. \cite[Lem.~5.1]{BrKu}).

\begin{lemma}\label{lem:multiply2}%~
%\item
\begin{itemize}  
\item[(i)]
Suppose given $(\bs i,\bs j) \in I^2(m|n,d)$ and $\bs l \in I(m|n,d)$.  Then
the action of $S(m|n,d)$ on $M^{\otimes d}$ satisfies
the equation
$$E^{(\bs i,\bs j)}Z^{\bs l} = 
\sum_{(\bs k, \bs l) \sim (\bs i, \bs j)} \text{sgn}(\bs i,\bs j; \bs k,\bs l)\text{chr}(\bs i,\bs j; \bs k) Z^{\bs k},
$$
and the action on $(\Pi M)^{\otimes d}$ is similarly given by
$$
E^{(\bs i,\bs j)}Z_\Pi^{\bs l} = 
\sum_{(\bs k, \bs l) \sim (\bs i, \bs j)} \text{sgn}(\bs i,\bs j; \bs k,\bs l)\text{chr}_\pi(\bs i,\bs j; \bs k) Z_\Pi^{\bs k}.
$$
\item[(ii)]
Furthermore, the multiplication in $S(m|n,d)$ satisfies
$$
E^{(\bs i,\bs j)}\circ E^{(\bs k,\bs l)} = 
\sum_{(\bs s,\bs t)\in \Omega(m|n,d)} C_{\bs i,\bs j,\bs k,\bs l,\bs s,\bs t} E^{(\bs s,\bs t)},
$$
with the coefficients
$$C_{\bs i,\bs j,\bs k,\bs l,\bs s,\bs t} = \sum \text{sgn}(\bs i,\bs j;\bs s,\bs h) 
  \text{sgn}(\bs k,\bs l;\bs h,\bs t) %
\hpp \text{chr}(\bs h,\bs t; \bs s,\bs h),$$
the sum being over all $\bs h\in I(m|n,d)$ such that 
$(\bs s,\bs h) \sim (\bs i,\bs j)$ and $(\bs h,\bs t) \sim (\bs k,\bs l)$.
\end{itemize}
\end{lemma}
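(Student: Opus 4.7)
The plan is to work entirely inside the endomorphism superalgebra $\End(M^{\otimes d})$ via the faithful embedding $\Delta: S(m|n,d) \hookrightarrow \End(M^{\otimes d})$ of equation (\ref{eq:tensor4}), and to derive both formulas by direct expansion and comparison of coefficients against the basis $\{E^{\bs s,\bs t}\}$ of $\End(M)^{\otimes d}$.

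For part (i), first I would substitute (\ref{eq:tensor4}) into $\Delta(E^{(\bs i,\bs j)}) Z^{\bs l}$ to obtain
$$\Delta(E^{(\bs i,\bs j)}) Z^{\bs l} \,=\, \sum_{(\bs s,\bs t)\sim(\bs i,\bs j)} \text{sgn}(\bs i,\bs j;\bs s,\bs t)\, E^{\bs s,\bs t} Z^{\bs l}.$$
Applying the first identity of (\ref{eq:tensor3}) termwise collapses this sum to those pairs with $\bs t = \bs l$; relabeling $\bs s = \bs k$ yields the desired formula, after checking that the charge $\text{chr}(\bs k,\bs l;\bs l)$ coincides with $\text{chr}(\bs i,\bs j;\bs k)$. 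This last equality reduces to an identity between charges on equivalent pairs, which follows from the fact that $|\bs s|_\Zc + |\bs t|_\Zc$ is a permutation of $|\bs i|_\Zc + |\bs j|_\Zc$ together with the Kronecker delta $\delta_{\bs t,\bs l}$. An identical argument, using the second identity of (\ref{eq:tensor3}) in place of the first, yields the claimed action on $(\Pi M)^{\otimes d}$.

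For part (ii), I would compute
$$\Delta(E^{(\bs i,\bs j)}) \circ \Delta(E^{(\bs k,\bs l)}) \,=\, \sum_{(\bs s,\bs t)\sim(\bs i,\bs j)} \sum_{(\bs u,\bs v)\sim(\bs k,\bs l)} \text{sgn}(\bs i,\bs j;\bs s,\bs t)\,\text{sgn}(\bs k,\bs l;\bs u,\bs v)\, E^{\bs s,\bs t}\circ E^{\bs u,\bs v}.$$
The composition $E^{\bs s,\bs t}\circ E^{\bs u,\bs v}$ is evaluated componentwise in $\End(M)^{\otimes d}$ using the rule of signs (\ref{eq:signs}): it vanishes unless $\bs t = \bs u$, in which case it equals a product of charge factors times $E^{\bs s,\bs v}$, the charges being precisely those arising from commuting each $E_{u_p, v_p}$ past the preceding $E_{s_q,t_q}$. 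After setting $\bs h = \bs t = \bs u$, grouping the resulting elementary tensors into diagonal $\Si_d$-orbits, and comparing with the expansion (\ref{eq:tensor4}) applied to $\Delta(E^{(\bs s,\bs v)})$, one reads off the coefficient $C_{\bs i,\bs j,\bs k,\bs l,\bs s,\bs t}$ as a sum over $\bs h$ in the stated form. Since $\Delta$ is an injective superalgebra homomorphism, matching coefficients on the level of $\End(M^{\otimes d})$ recovers the multiplication formula in $S(m|n,d)$.

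The main obstacle will be the sign and charge bookkeeping in part (ii): verifying that all contributions---from the rule of signs in the componentwise composition, from the two factors $\text{sgn}(\bs i,\bs j;\bs s,\bs t)$ and $\text{sgn}(\bs k,\bs l;\bs u,\bs v)$, and from the reindexing to orbit representatives---consolidate into the single product $\text{sgn}(\bs i,\bs j;\bs s,\bs h)\,\text{sgn}(\bs k,\bs l;\bs h,\bs t)\,\text{chr}(\bs h,\bs t;\bs s,\bs h)$. The analogous computation in the purely even setting reduces to Green's classical multiplication formula for the Schur algebra, and the super analogue here is closely parallel to \cite[Lem.~5.1]{BrKu}; I would model the sign analysis on that argument, adapting for the present choice of basis conventions.
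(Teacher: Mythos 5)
Your overall strategy --- work inside the faithful embedding $\Delta: S(m|n,d) \hookrightarrow \End(M^{\otimes d})$, expand with (\ref{eq:tensor4}), and compare coefficients --- is the same one the paper gestures at. Part (i) is handled exactly as in the paper. But the justification you give for the key charge identity, namely that $\text{chr}(\bs k,\bs l;\bs l)$ agrees with $\text{chr}(\bs i,\bs j;\bs k)$ ``because $|\bs s|_\Zc + |\bs t|_\Zc$ is a permutation of $|\bs i|_\Zc + |\bs j|_\Zc$ together with the Kronecker delta,'' is not adequate: $\text{chr}(\varepsilon,\delta)$ is emphatically \emph{not} permutation-invariant in its arguments, so the fact that the degree sequences differ by a permutation does not by itself give equality of charges. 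What is actually needed is an explicit unraveling of the two products $\prod_{s<t}(-1)^{(\cdot)_s(\cdot)_t}$ under the substitution $(\bs k,\bs l) = (\bs i.\sigma, \bs j.\sigma)$, with the factor $\text{sgn}(\bs i,\bs j;\bs k,\bs l)$ accounted for simultaneously; as written your argument has a genuine gap at this point.

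For part (ii) you deviate from the paper. You expand \emph{both} $\Delta(E^{(\bs i,\bs j)})$ and $\Delta(E^{(\bs k,\bs l)})$ in $\End(M)^{\otimes d}$ and compose elementary tensors directly, invoking the rule-of-signs factor for the componentwise composition. This is viable but forces you to track an extra layer of signs from the superalgebra structure on $\End(M)^{\otimes d}$. The paper instead \emph{reuses part (i)}: compute $(E^{(\bs i,\bs j)}\circ E^{(\bs k,\bs l)})\hp Z^{\bs l'}$ for each basis vector $Z^{\bs l'}$ by applying the action formula of (i) twice in succession, compare with $\sum C_{\dots} E^{(\bs s,\bs t)} Z^{\bs l'}$ (again via (i)), and conclude by faithfulness of $\Delta$. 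That route replaces all operator-level Koszul signs with the already-verified vector-level formula, so the sign bookkeeping you flag as ``the main obstacle'' simply does not arise; if you find the composition-of-tensors signs difficult to consolidate, switching to the paper's route via part (i) is the cleaner path.
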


\begin{proof}
The first part of (i) follows from (\ref{eq:tensor3}) and (\ref{eq:tensor4}).  The
second equation is also not difficult to see since 
$|\bs i|_\Zc  + |\bs j|_\Zc = |\bs i|_{\Zc_\pi} + |\bs j|_{\Zc_\pi}$.  

The formula in (ii) can be verified using the first part of (i) together with the
fact that the embedding $S(m|n,d) \hookrightarrow \End(M^{\otimes d})$ is faithful.
\end{proof}

Let us write $\Lambda(m|n) = (\Z_{\geq 0})^{m+n}$ and 
$\Lambda(m|n,d) = \{\lambda\in \Lambda(m|n):\ |\lambda|= \sum \lambda_i=d\}$. 
We let $\Lambda^+(m|n,d)$ denote the subset consisting of all partitions $\lambda \in \Lambda(m|n,d)$.  
For any $\mu\in \Lambda(m|n,d)$, we define the element $E^{(\mu)} = 
E^{(\bs i^{\mu}, \bs i^{\mu})} \in S(m|n,d)$. 
The elements $E^{(\mu)}$ ($\mu\in  \Lambda(m|n,d)$)  are called  {\em weight idempotents}.  
Given $\bs i\in I(m|n,d)$, recall the definition of $wt(\bs i) \in\Lambda(m|n,d)$ 
from Section \ref{ss:divided}.
The following is a restatement of \cite[Lemma 5.3]{BrKu} in terms of our notation.

\begin{lemma}\label{lem:BrKu}
Suppose $(\bs i,\bs j)\in {I}^2(m|n,d)$ and $\mu \in  \Lambda(m|n,d)$.  Then 
$$
{E}^{(\mu)}\hm\circ E^{(\bs i,\bs j)} = \begin{cases}
E^{(\bs i,\bs j)}& \text{if } wt(\bs i) = \mu, \\
 0& \text{otherwise}, 
 \end{cases} 
\qquad
E^{(\bs i,\bs j)}\hm \circ E^{{(\mu)}} =\begin{cases}
E^{(\bs i,\bs j)} &\text{if } wt(\bs j) = \mu,\\
0& \text{otherwise}.
\end{cases}
$$
In particular, $\{E^{(\mu)}:\ \mu\in  \Lambda(m|n,d)\}$ gives a  set of mutually
orthogonal even idempotents whose sum, $e= \sum E^{(\mu)}$, 
is the identity of $S(m|n,d)$.
\end{lemma}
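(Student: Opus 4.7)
The plan is to exploit the faithful embedding $\Delta:S(m|n,d)\hookrightarrow \End(M^{\otimes d})$ together with the explicit action formula in Lemma~\ref{lem:multiply2}(i). I will first show that each $E^{(\mu)}$ acts on $M^{\otimes d}$ as the projection onto its $\mu$-weight subspace, then observe how a general $E^{(\bs i,\bs j)}$ shuffles weight subspaces, and finally combine the two pictures.

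Concretely, let $\bs i^\mu\in I(m|n,d)$ be the standardized sequence of weight $\mu$, so that $(\bs i^\mu,\bs i^\mu)\in \Omega(m|n,d)\cap I^2(m|n,d)$ and $E^{(\mu)}=E^{(\bs i^\mu,\bs i^\mu)}$. Applied to a basis vector $Z^{\bs l}$ via Lemma~\ref{lem:multiply2}(i), the sum collapses to those $\bs k$ with $(\bs k,\bs l)\sim(\bs i^\mu,\bs i^\mu)$, which forces $\bs k=\bs l$ and $wt(\bs l)=\mu$. Since $|\bs i^\mu|_\Zc+|\bs i^\mu|_\Zc=0\in(\Z_2)^d$, both the $\mathrm{sgn}$ and $\mathrm{chr}$ factors reduce to $1$, and hence $E^{(\mu)}Z^{\bs l}=\delta_{wt(\bs l),\mu}\,Z^{\bs l}$. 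Thus $E^{(\mu)}$ is precisely the projector onto the $\mu$-weight subspace $M^{\otimes d}(\mu):=\mathrm{span}\{Z^{\bs l}:wt(\bs l)=\mu\}$. A parallel inspection of Lemma~\ref{lem:multiply2}(i) shows that a general $E^{(\bs i,\bs j)}$ annihilates every weight subspace of $M^{\otimes d}$ except $M^{\otimes d}(wt(\bs j))$, and sends that subspace into $M^{\otimes d}(wt(\bs i))$.

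Composing these two facts in either order immediately yields the displayed equalities once we invoke faithfulness of $\Delta$: the image of $E^{(\bs i,\bs j)}$ lies in $M^{\otimes d}(wt(\bs i))$, so $E^{(\mu)}$ preserves it when $\mu=wt(\bs i)$ and kills it otherwise; symmetrically, $E^{(\mu)}$ sits inside the $\mu$-weight subspace, so $E^{(\bs i,\bs j)}\circ E^{(\mu)}$ is nonzero iff $wt(\bs j)=\mu$. For the ``in particular'' statement, mutual orthogonality $E^{(\mu)}\circ E^{(\nu)}=\delta_{\mu,\nu}E^{(\mu)}$ is the special case $(\bs i,\bs j)=(\bs i^\nu,\bs i^\nu)$ of what has already been proved; evenness of $E^{(\mu)}$ is clear since $(\bs i^\mu,\bs i^\mu)$ has trivial total $\Z_2$-parity; and $\sum_\mu E^{(\mu)}$ acts as the identity on $M^{\otimes d}$ because $M^{\otimes d}=\bigoplus_\mu M^{\otimes d}(\mu)$ via the weight decomposition of the basis $\{Z^{\bs l}\}$, whence this sum equals the identity in $S(m|n,d)$ by faithfulness of $\Delta$.

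The only genuine care in this argument lies in confirming that the $\mathrm{sgn}$ and $\mathrm{chr}$ factors appearing in Lemma~\ref{lem:multiply2}(i) trivialize for the idempotent $E^{(\mu)}$; this is routine once one notes the crucial identity $|\bs i^\mu|_\Zc+|\bs i^\mu|_\Zc=0$ in $(\Z_2)^d$. No deeper obstacle is anticipated, and in particular no appeal to Lemma~\ref{lem:multiply2}(ii) is necessary.
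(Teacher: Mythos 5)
Your argument is correct. It differs from the paper's proof only in which part of Lemma~\ref{lem:multiply2} it invokes: the paper derives the claim in one line from the explicit multiplication formula in part~(ii), whereas you go directly to the action formula in part~(i) together with faithfulness of $\Delta:S(m|n,d)\hookrightarrow\End(M^{\otimes d})$. Since the paper itself proves part~(ii) precisely from part~(i) and faithfulness, the two routes are ultimately the same in substance, but yours bypasses the detour through the general structure constants $C_{\bs i,\bs j,\bs k,\bs l,\bs s,\bs t}$ and instead works with the cleaner statement that $\Delta(E^{(\mu)})$ is the projector onto the $\mu$-weight subspace of $M^{\otimes d}$ and $\Delta(E^{(\bs i,\bs j)})$ maps the $wt(\bs j)$-subspace into the $wt(\bs i)$-subspace while killing all others. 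This is slightly more economical for the present lemma, and it makes the underlying picture (weight spaces of $M^{\otimes d}$ and the block structure of $\End_{\k\Si_d}(M^{\otimes d})$ under conjugation by weight projectors) more transparent; the paper's citation of part~(ii), on the other hand, keeps the proof local to the algebra $S(m|n,d)$ without unwinding the embedding again.
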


\begin{proof}
This follows from from part (ii) of  Lemma \ref{lem:multiply2}.
\end{proof}

Given a  finite dimensional $S(m|n,d)$-supermodule, $V$, 
and $\mu \in \Lambda(m|n,d)$,
we define the weight space $V_\mu= E^{(\mu)}V$.  
From the above lemma, we  have  
a weight space decomposition
\begin{equation}\label{eq:decomp1}
V = \bigoplus_{\mu\hp \in\hp \Lambda(m|n,d)} V_\mu.
\end{equation}
We call a homogeneous
vector $v\in V_{\mu}$ a {\em weight vector} of weight 
$\mu$, and we write $wt(v)=\mu$.
%

%ss
\subsection{Schur supermodules and Schur superfunctors}

Recall from \cite[Sec.~4]{Axtell}  that there is a functor $\Pol^\I_d \to S(m|n,d)\hpp \smod$, 
given by sending $T\mapsto T(\k^{m|n})$.  In particular $I^{\otimes d}\mapsto M^{\otimes d}$
and $\Pi^{\otimes d} \mapsto (\Pi M)^{\otimes d}$, where we continue to fix $M=\k^{m|n}$. 
We also have $\Gamma_\Pi^{\lambda} \mapsto \Gamma_\Pi^{\lambda}(M)$.  Since there
is an embedding $\Delta: \Gamma_\Pi^{\lambda} \hookrightarrow \Pi^{\otimes d}$,
we also have an embedding 
\begin{equation}\label{eq:embedPi2}
\Delta(M): \Gamma_\Pi^{\lambda}(M) \hookrightarrow (\Pi M)^{\otimes d}
\end{equation}
of $S(m|n,d)$-supermodules.  We wish to describe the action of $S(m|n,d)$ on $\Gamma_\Pi^{\lambda}M$
explicitly, by restricting the action on $(\Pi M)^{\otimes d}$ in Lemma \ref{lem:multiply2} considered above.
\smallskip

We next define an action of the symmetric groups $\Si_d$ on $\text{Tab}_{m|n}(\lambda)$.
The action is given by restricting the action of $\Si_d$ on $I(m|n,d)$ along the bijection, 
 $w(\ ): \text{Tab}_{m|n}(\lambda) \to I(m|n,d)$, 
sending a tableau to its reading word. 
I.e., if $\sigma \in \Si_d$ and $\mathfrak t \in \text{Tab}_{m|n}(\lambda)$, then 
 $\mathfrak t.\sigma$ is the tableau of 
 shape $\lambda$ such that $w(\mathfrak t.\sigma) = w(\mathfrak t).\sigma$.

Now consider the subgroup 
$\Si_{\lambda}^\leftrightarrow = \Si_{\lambda_1} \times \cdots \times \Si_{\lambda_q} 
\subset \Si_d$.  This subgroup acts on $\mathfrak t \in \text{Tab}_{m|n}(\lambda)$ by 
permuting the entries of each row.  
We say that $\mathfrak{s,t} \in \text{Tab}_{m|n}(\lambda)$ are {\em row equivalent} 
and write $\mathfrak s \approx_\text{R} \mathfrak t$, 
if there exists $\sigma \in \Si_{\lambda}^\leftrightarrow$ such that $\mathfrak s = \mathfrak t.\sigma$. 
\smallskip

We also define the subgroup  permuting the elements of each column in a tableaux.  
For this it is convenient to 
consider the tableaux $\mathfrak{col}_\lambda \in \text{Tab}_{m|n}(\lambda)$ given by 
$\mathfrak{col}_\lambda(i,j)=j$ for all $(i,j)\in \triangle_\lambda$.  
%(Notice that $\mathfrak{col}_\lambda$ is costandard if  $l(\lambda')\leq m$).  
We then let 
$\Si_\lambda^\updownarrow = \{ \sigma \in \Si_d : w(\mathfrak{col}_\lambda).\sigma 
= w(\mathfrak{col}_\lambda)\}$.
Now if there exists a permutation $\sigma \in \Si_{\lambda}^\updownarrow$ 
such that $\mathfrak s = \mathfrak t.\sigma$,
then we say that $\mathfrak{s,t} \in \text{Tab}_{m|n}(\lambda)$ are {\em column equivalent} 
and write $\mathfrak s \approx_\text{C} \mathfrak t$.
\smallskip

Define a relation: $\mathfrak s \stackrel{\text{RC}}{\longrightarrow} \mathfrak t$, 
if there exists a tableau $\mathfrak r \in \text{Tab}_{m|n}(\lambda)$ such that 
$\mathfrak r \approx_\text{R} \mathfrak s$ and $\mathfrak r \approx_\text{C} \mathfrak t$.

\begin{lemma}\label{lem:precede}
Suppose $\mathfrak{s,t} \in \text{Tab}_{m|n}(\lambda)$ are both costandard.  
If $\mathfrak s \stackrel{\text{RC}}{\longrightarrow} \mathfrak t$, then 
$w(\mathfrak s) \succeq w(\mathfrak t)$.
\end{lemma}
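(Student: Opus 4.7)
The plan is to induct on the number of rows $q=l(\lambda)$, comparing the first rows of $\mathfrak s$ and $\mathfrak t$ and then peeling off the first row to reduce to a smaller shape. Fix a tableau $\mathfrak r\in\text{Tab}_{m|n}(\lambda)$ witnessing the relation $\mathfrak s \stackrel{\text{RC}}{\longrightarrow} \mathfrak t$, so that $\mathfrak r\approx_\text{R}\mathfrak s$ and $\mathfrak r\approx_\text{C}\mathfrak t$.

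First I would establish coordinate-wise that $\mathfrak s(1,j)\ge\mathfrak t(1,j)$ for $j=1,\ldots,\lambda_1$. Since $\mathfrak t$ is column costandard, $\mathfrak t(1,j)$ is the minimum of the multiset of entries in column $j$ of $\mathfrak t$; by the column equivalence the same multiset appears in column $j$ of $\mathfrak r$, and hence $\mathfrak t(1,j)\le\mathfrak r(1,j)$. Row costandardness of $\mathfrak t$ says the sequence $(\mathfrak t(1,j))_j$ is already non-decreasing, while row costandardness of $\mathfrak s$ together with $\mathfrak r\approx_\text{R}\mathfrak s$ forces $\mathfrak s(1,j)$ to be the $j$-th smallest element of the row-$1$ multiset of $\mathfrak r$. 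The elementary observation that coordinate-wise inequalities survive monotone sorting (a one-line counting argument) then yields $\mathfrak s(1,j)\ge\mathfrak t(1,j)$ for every $j$.

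If the inequality is strict at some column $j_0$, then taking $j_0$ minimal shows that $w(\mathfrak s)$ already lex-exceeds $w(\mathfrak t)$ within the first row, and we are done. Otherwise $\mathfrak s(1,j)=\mathfrak t(1,j)$ for all $j$, so the multiset of row $1$ of $\mathfrak r$ coincides with the multiset of column minima of $\mathfrak r$; combined with the pointwise inequality $\mathfrak r(1,j)\ge$ (column-$j$ min), an equal-sum argument forces $\mathfrak r(1,j)=\mathfrak t(1,j)$ for every $j$. I would then delete the common first row from each of $\mathfrak s,\mathfrak r,\mathfrak t$ to obtain $\tilde{\mathfrak s},\tilde{\mathfrak r},\tilde{\mathfrak t}\in\text{Tab}_{m|n}(\tilde\lambda)$ for $\tilde\lambda=(\lambda_2,\ldots,\lambda_q)$. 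The equivalence $\tilde{\mathfrak r}\approx_\text{R}\tilde{\mathfrak s}$ is immediate, the column equivalence $\tilde{\mathfrak r}\approx_\text{C}\tilde{\mathfrak t}$ persists because identical entries have been removed from each column, and $\tilde{\mathfrak s},\tilde{\mathfrak t}$ remain costandard. The inductive hypothesis gives $w(\tilde{\mathfrak s})\succeq w(\tilde{\mathfrak t})$, and since $w(\mathfrak s)$ and $w(\mathfrak t)$ share the common first row as their initial segment, the desired $w(\mathfrak s)\succeq w(\mathfrak t)$ follows. The base case $q=1$ is immediate: columns of length one force $\mathfrak r=\mathfrak t$, and row costandardness of both $\mathfrak s$ and $\mathfrak t$ combined with $\mathfrak r\approx_\text{R}\mathfrak s$ then gives $\mathfrak s=\mathfrak t$.

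The only technical point to watch is that in the $\Z_2$-graded setting the sort-based arguments are well-defined: row costandardness permits repeats only in $\Zc_1$ and column costandardness only in $\Zc_0$, so the non-decreasing arrangement of any costandard row's or column's multiset is unambiguous. Beyond this, the argument uses only the total order on $\Zc$ and is unaffected by the grading.
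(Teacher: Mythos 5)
Your proof is correct and uses essentially the same argument as the paper: column equivalence plus column costandardness of $\mathfrak t$ yields the coordinatewise lower bound $\mathfrak r(1,j)\geq\mathfrak t(1,j)$, and sorting the row (via row costandardness of $\mathfrak s$) preserves this inequality against the already-sorted first row of $\mathfrak t$, with the equal-multiset/equal-sum argument forcing the rows to coincide in the non-strict case. The paper organizes this as a direct argument locating the first row $i_0$ where $\mathfrak r$ and $\mathfrak t$ differ rather than as an induction peeling off the top row, but the content and key observations are the same.
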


\begin{proof}
We may assume that $\mathfrak s \neq \mathfrak t$. Then 
either $\mathfrak r\neq\mathfrak s$ or $\mathfrak r \neq \mathfrak t$. 
Notice that each equivalence class of $\approx_\text{C}$ (resp.~of $\approx_\text{R}$) 
contains at most one tableau which is costandard.
Hence, we have both $\mathfrak r\neq\mathfrak s$ and 
$\mathfrak r \neq \mathfrak t$. 
Let $i_0$ be the first row of $\lambda$ 
such that $\mathfrak r^{i_0} \neq \mathfrak t^{i_0}$.  
%Then $\mathfrak r^1=\mathfrak t^1, \dots, \mathfrak r^{i_0-1} = \mathfrak t^{i_0-1}$.
We must then have
$\mathfrak r(i_0,j) \geq \mathfrak t(i_0,j)$ for $1\leq j \leq \lambda_{i_0}$,
since $\mathfrak t(i_0,j) \leq \mathfrak t(i,j)$, for all $i>i_0$ such that 
$(i,j)\in \triangle_\lambda$. 

Notice that each row $\mathfrak s^i$ of $\mathfrak s$ is obtained from the $i$-th row
of $\mathfrak r$ by rearranging the entries of $\mathfrak r^i$ into nonincreasing order.  
Since $\mathfrak t$ is costandard and $\mathfrak r^i = \mathfrak t^i$ for $1\leq i< i_0$,
it follows that $\mathfrak s^i= \mathfrak t^i$ for $1\leq i<i_0$ as well.
Now since $\mathfrak r(i_0,j)\geq \mathfrak t(i_0,j)$, for $1\leq j\leq \lambda_{i_0}$, 
and $\mathfrak s^i$ is a non-increasing rearrangement of $\mathfrak r^i$, we must 
also have $\mathfrak s(i_0,j) \geq \mathfrak t(i_0,j)$, for $1\leq j\leq \lambda_{i_0}$. 
Finally, there must be at least one $j$ such that $\mathfrak s(i_0,j)> \mathfrak t(i_0,j)$
since $\mathfrak r^{i_0} \neq \mathfrak t^{i_0}$.
It follows that $w(\mathfrak s) \succ w(\mathfrak t)$.
\end{proof}

In the following, we will write $\text{chr}_\pi(\mathfrak{s, t};\mathfrak{t})$ instead 
of $\text{chr}_\pi(w(\mathfrak s),w(\mathfrak t);w(\mathfrak t))$, and similar notation
holds for $\text{sgn}$.

\begin{lemma}\label{lem:multiply3}
Suppose $\lambda$ is a partition. %such that $l(\lambda)\leq n$.  
The  action of $S(m|n,d)$ on the supermodule $\Gamma^\lambda_\Pi M$
is given as follows.  
Suppose $\mathfrak t\in \text{Tab}_{m|n}(\lambda)$ is  row costandard.
Then
$$ E^{(\bs i,\bs j)} Z_\Pi^{(\mathfrak t)} =  
\sum_{w(\mathfrak s)\sim \bs i} C_{\bs i,\bs j; \mathfrak{s,t}} Z_\Pi^{(\mathfrak s)},$$
where the sum is over row costandard $\mathfrak s \in \text{Tab}_{m|n}(\lambda)$,
with the coefficients
$$
C_{\bs i,\bs j; \mathfrak{s,t}}= \sum  \text{sgn}(\bs i,\bs j; \mathfrak{s}, \mathfrak{r})\hpp 
\text{chr}_\pi(\mathfrak{s}, \mathfrak{r}; \mathfrak{s}),
$$
summing over all $\mathfrak r \in \text{Tab}_{m|n}(\lambda)$  such that 
$\mathfrak r \approx_\text{R} \mathfrak t$ and $(w(\mathfrak s), w(\mathfrak r)) \sim (\bs i,\bs j)$.
\end{lemma}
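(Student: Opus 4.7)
The plan is to exploit the faithful, $S(m|n,d)$-equivariant embedding $\Delta_\Pi(M)\colon \Gamma_\Pi^{\lambda} M \hookrightarrow (\Pi M)^{\otimes d}$ provided by the tensor product of the maps in (\ref{eq:tensor2}); this reduces the calculation to one in the tensor power, where the $S(m|n,d)$-action is given explicitly by Lemma \ref{lem:multiply2}(i).

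First I would expand $\Delta_\Pi(Z_\Pi^{(\mathfrak t)})$ using (\ref{eq:Delta}) applied tensor-factor by tensor-factor. Because $\mathfrak t$ is row costandard, each $w(\mathfrak t^i)$ is $\Zc_\pi$-standardized in the sense of Section~\ref{ss:divided}, so no summands coalesce, and one obtains
\[
\Delta_\Pi\bigl(Z_\Pi^{(\mathfrak t)}\bigr) \;=\; \sum_{\mathfrak r \approx_{\mathrm R} \mathfrak t} \text{sgn}_{\Zc_\pi}(\mathfrak t;\mathfrak r)\, Z_\Pi^{w(\mathfrak r)},
\]
the sum ranging over representatives of $\Si_\lambda^{\leftrightarrow}/\Si_{\mathfrak t}^{\leftrightarrow}$, identified with the row-rearrangements $\mathfrak r$ of $\mathfrak t$.

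Next I would apply $E^{(\bs i,\bs j)}$ termwise via Lemma \ref{lem:multiply2}(i) and extract the coefficient of $Z_\Pi^{(\mathfrak s)}$ in the result. For any row-costandard $\mathfrak s$, the tensor $Z_\Pi^{w(\mathfrak s)}$ occurs in $\Delta_\Pi(Z_\Pi^{(\mathfrak s)})$ with coefficient $+1$ (via the identity coset) and in no other $\Delta_\Pi(Z_\Pi^{(\mathfrak s')})$ with $\mathfrak s'$ row costandard of shape $\lambda$; hence, by the $S(m|n,d)$-equivariance of $\Delta_\Pi$, the coefficient of $Z_\Pi^{(\mathfrak s)}$ in $E^{(\bs i,\bs j)} Z_\Pi^{(\mathfrak t)}$ equals the coefficient of $Z_\Pi^{w(\mathfrak s)}$ in $E^{(\bs i,\bs j)}\Delta_\Pi(Z_\Pi^{(\mathfrak t)})$. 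Combining the two expansions yields a preliminary expression
\[
C_{\bs i,\bs j;\mathfrak{s,t}} \;=\; \sum_{\substack{\mathfrak r\approx_{\mathrm R}\mathfrak t\\ (w(\mathfrak s),w(\mathfrak r))\sim(\bs i,\bs j)}} \text{sgn}_{\Zc_\pi}(\mathfrak t;\mathfrak r)\,\text{sgn}(\bs i,\bs j;\mathfrak s,\mathfrak r)\,\text{chr}_\pi(\bs i,\bs j;\mathfrak s).
\]

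Finally, to bring this into the form stated in the lemma I would massage the summand using the relations between the charge function and the various signs. The main obstacle will be exactly this sign-and-charge bookkeeping: one must track how $\text{chr}_\pi(\bs i,\bs j;\mathfrak s)$ changes when $(\bs i,\bs j)$ is permuted to $(w(\mathfrak s),w(\mathfrak r))$, and verify that the resulting discrepancy, multiplied by $\text{sgn}_{\Zc_\pi}(\mathfrak t;\mathfrak r)$, can be absorbed so that each summand is rewritten as $\text{sgn}(\bs i,\bs j;\mathfrak s,\mathfrak r)\,\text{chr}_\pi(\mathfrak s,\mathfrak r;\mathfrak s)$. This is a $\Zc_\pi$-graded analogue of the calculation used in \cite[Sec.~5]{BrKu} to prove Lemma \ref{lem:BrKu} above, complicated by the additional parity shift inherent in $\Gamma_\Pi^\lambda$; once this identity is established, the claimed formula follows immediately.
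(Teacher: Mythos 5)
Your proposal is correct and follows essentially the same route as the paper: the paper's proof simply records the explicit formula
$\Delta\bigl(Z_\Pi^{(\mathfrak t)}\bigr)= \sum_{\mathfrak r \approx_{\mathrm R} \mathfrak t} \text{sgn}_{\Zc_\pi}(\mathfrak t; \mathfrak r)\, Z_\Pi^{\mathfrak r}$
for the $S(m|n,d)$-equivariant embedding $\Gamma_\Pi^\lambda M \hookrightarrow (\Pi M)^{\otimes d}$ and then states that the lemma follows by restricting the second formula of Lemma \ref{lem:multiply2}(i) to the image, which is precisely the expansion, extraction-of-coefficient, and sign-reconciliation you carry out. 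The remaining sign-and-charge identity you flag at the end is exactly the bookkeeping the paper leaves implicit, so your proposal reproduces the paper's argument in fuller detail rather than taking a different path.
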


\begin{proof}
Notice that the embedding (\ref{eq:embedPi2}) can be given explicitly by the formula
$$ \Delta( Z_\Pi^{(\mathfrak t)})= 
 \sum_{\mathfrak s \approx_\text{R} t} \text{sgn}_{\Zc_\pi}(\mathfrak t; \mathfrak s)\hpp Z_\Pi^{\hp \mathfrak s}. 
$$
The lemma may then be verified by restricting the second formula in part (i) of Lemma \ref{lem:multiply2}
to the image of $\Gamma_\Pi^\lambda M$ under this embedding.%the embedding (\ref{eq:embedPi2}).
\end{proof}

Let us order the elements of $\bs i \in I(m|n,d)$ using the lexicographic order $\preceq$ 
on the sequences $(i_1, \dots, i_d)$. 
I.e., we have $\bs i \preceq \bs j$ if $\bs i = \bs j$ or if there exists $1\leq k< d$
such that $i_1=j_1, \dots, i_k=j_k$, and $i_{k+1} < j_{k+1}$.

Next we define the subsuperspace $N(m|n,d)\subset S(m|n,d)$ to be the 
linear span of all $E^{(\bs i,\bs j)}$ such
that $(\bs i, \bs j) \in I^2(m|n,d)$ and $\bs i \prec \bs j$.  Then it is clear that $N(m|n,d)$ is a (nonunital) subalgebra. We say that 
an element $v\in V$, of a finite dimensional $S(m|n,d)$-supermodule $V$, is {\em $N(m|n,d)$-invariant}
if $x.v = 0$ for all $x\in N(m|n,d)$.  An $N(m|n,d)$-invariant element which is also a nonzero weight vector is 
called a {\em highest weight vector} of $V$.
%

%lem
\begin{lemma}\label{lem:invariant}
Every nonzero supermodule $V\in S(m|n,d)\hpp \smod$ contains a
highest weight vector.
\end{lemma}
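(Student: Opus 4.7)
The plan is to use the weight space decomposition together with a maximality argument on the weights appearing in $V$. First, I would apply Lemma \ref{lem:BrKu} to obtain the decomposition $V = \bigoplus_{\mu \in \Lambda(m|n,d)} V_\mu$ with $V_\mu = E^{(\mu)}V$. Since $V \neq 0$ and the weight idempotents sum to the identity, the set $W = \{\mu \in \Lambda(m|n,d) : V_\mu \neq 0\}$ is a nonempty finite subset of $\Lambda(m|n,d)$.

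Next I would equip $\Lambda(m|n,d)$ with an appropriate total order (a natural candidate being the lexicographic order $\preceq$ on $(m+n)$-tuples, or a linear refinement of dominance order), and pick $\mu_0 \in W$ maximal in this order. The claim is then that any nonzero homogeneous vector $v \in V_{\mu_0}$ is a highest weight vector of $V$. To verify this, take a basis element $E^{(\bs i,\bs j)} \in N(m|n,d)$, so $(\bs i,\bs j) \in I^2(m|n,d)$ with $\bs i \prec \bs j$. By Lemma \ref{lem:BrKu} we may write $E^{(\bs i,\bs j)} = E^{(wt(\bs i))} \circ E^{(\bs i,\bs j)} \circ E^{(wt(\bs j))}$, so $E^{(\bs i,\bs j)}v \in V_{wt(\bs i)}$ and this is nonzero only when $wt(\bs j) = \mu_0$. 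The key point is then that the condition $\bs i \prec \bs j$ (combined with the strictness of the pair) forces the weight $wt(\bs i)$ to strictly exceed $\mu_0$ in our chosen order; by maximality of $\mu_0$, this gives $V_{wt(\bs i)} = 0$ and hence $E^{(\bs i,\bs j)}v = 0$, as required.

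The main obstacle is verifying the monotonicity alluded to in the last step: that the defining condition $\bs i \prec \bs j$ for elements of $N(m|n,d)$ is reflected by a strict inequality $wt(\bs i) \succ wt(\bs j)$ in the chosen order. The cleanest way is to reduce to canonical orbit representatives in $\Omega(m|n,d) \cap I^2(m|n,d)$ (for instance those with $\bs j$ standardized, so that $\bs j = \bs i^{wt(\bs j)}$), where the lex order on sequences $I(m|n,d)$ translates directly into the reverse lex order on weights $\Lambda(m|n,d)$. The strict-pair condition prevents the extra cancellations that could obstruct this comparison, and the multiplication formula in Lemma \ref{lem:multiply2}, together with the sign and charge formalism, supplies the remaining bookkeeping. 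Once the monotonicity is in place, the existence of a highest weight vector follows at once from maximality of $\mu_0$.
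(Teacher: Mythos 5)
Your argument is essentially the paper's own: decompose $V$ into weight spaces using the idempotents of Lemma~\ref{lem:BrKu}, pick a lex-extremal weight $\mu_0$, and observe that a weight-monotonicity property of the elements $E^{(\bs i,\bs j)}$ spanning $N(m|n,d)$ forces $V_{\mu_0}$ to consist of $N(m|n,d)$-invariants. Two comments. First, your choice of the lex-\emph{maximal} weight $\mu_0$ is in fact the correct direction: as one sees from Example~\ref{ex:Schur} and the proof of Theorem~\ref{thm:indecomp} (where $E^{(\mathfrak c_\lambda,\mathfrak t_0)}\in N(m|n,d)$ raises $wt(\mathfrak t_0)$ up to $\lambda'$), the highest weight vector sits in the lex-greatest weight space, so the paper's text, which chooses the ``earliest'' weight and writes $wt(\bs i)\prec\mu$, has the inequality reversed. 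Second, you are right to single out the monotonicity as the delicate point and to propose reducing to orbit representatives with $\bs j$ standardized: this is exactly what makes it work. Indeed, if $\bs j$ is non-decreasing and $\bs i\prec\bs j$, let $k$ be the first index with $i_k<j_k$ and put $a=i_k$; then $j_l\geq j_k>a$ for all $l\geq k$, hence $wt(\bs i)_s\geq wt(\bs j)_s$ for $s<a$ while $wt(\bs i)_a>wt(\bs j)_a$, so $wt(\bs i)\succ wt(\bs j)$. The passage to such a representative is permissible because $E^{(\bs i,\bs j)}$ depends only on the $\Si_d$-orbit of $(\bs i,\bs j)$ up to sign (and without it the comparison genuinely fails, e.g.\ for $\bs i=(1,2)$, $\bs j=(2,1)$); by contrast, the $I^2$-strictness condition you invoke is a parity constraint and does not enter this lex computation.
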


\begin{proof}
Suppose $v\in V_\mu$ is a weight vector, for some $\mu \in \Lambda(m|n,d)$.  
If $(\bs i, \bs j) \in I^2(m|n,d)$, then 
it is not difficult to see using Lemma \ref{lem:BrKu} that $E^{(\bs i, \bs j)} v$ is 
also a weight vector of weight $wt(\bs i)$, and  $E^{(\bs i, \bs j)} v = 0$
unless $wt(\bs j)= \mu$. 
It follows that $wt(E^{(\bs i, \bs j)} v) = wt(\bs i) \prec \mu$, whenever $\bs i \prec \bs j$. 
Hence, if we choose $\mu$ to be the earliest weight in the lexicographic order such 
that $V_\mu \neq 0$, then any nonzero $v\in V_\mu$ must be a highest
weight vector.
\end{proof}

Recall from \cite[Sec.~4]{Axtell} that there is an evaluation functor 
$\ev^{\hp\I}_{m|n}: \Pol^{\hp \I}_d \to S(m|n,d)\hpp \smod$, given 
by sending $T \mapsto T(\k^{m|n})$, which is an equivalence if $m,n\geq d$.

\begin{definition}
It follows from the preceding paragraph that 
the natural transformation 
$\hat{\theta}_\lambda: \Gamma^{\lambda'}_\Pi \to S^{\lambda}$ yields by
evaluation an even $S(m|n,d)$-supermodule homomorphism
 $$\hat{\theta}_\lambda(M): \Gamma^{\lambda'}_\Pi M \to S^{\lambda} M.$$ 
Hence, the image of this homomorphism, $\widehat{S}_\lambda(M)$, 
is naturally an $S(m|n,d)$-supermodule which we call the 
{\em Schur supermodule} of weight $\lambda$.
\end{definition}

Consider the canonical tableaux $\mathfrak c_\lambda$ which is the costandard
tableaux in $\text{Tab}_{m|n}(\lambda)$ defined by setting, for $(i,j) \in \triangle_\lambda$,
$$
\mathfrak c(i,j) = 
\begin{cases} j, & \text{if }1\leq j \leq m\\
m+i, & \text{if }j>m.
\end{cases}
$$
Then $\mathfrak c_\lambda \preceq \mathfrak t$, for any costandard
 $\mathfrak t \in \text{Tab}_{m|n}(\lambda)$.

\begin{example}\label{ex:Schur}
It follows from Lemma \ref{lem:invariant} and 
the Standard Basis Theorem \ref{thm:standard} that 
the element $\hat{\theta}_{\lambda'}(Z_\Pi^{(\mathfrak c_\lambda)})$ 
corresponding to the
canonical tableau $\mathfrak c_\lambda$ is a highest 
weight vector of the Schur supermodule $\widehat{S}_{\lambda'}(M)$.
\end{example}

From now on let us identify the Schur supermodule $\widehat{S}_{\lambda'}M$
as the quotient $\Gamma_\Pi^\lambda M/ \diamondsuit_\lambda(M)$.  For
convenience, we also write $x = x+ \diamondsuit_\lambda(M)$ to denote the
coset of any $x\in \Gamma_\Pi^\lambda M$.
The following may be viewed as a super analogue of the result \cite[Theorem 3.3]{DEP}
of De Concini, Eisenbud, and Procesi. 

\begin{theorem}\label{thm:indecomp}
Let $\lambda \in \Lambda^+(m|n,d)$ be a partition such that 
 $l(\lambda')\leq m$.
Then: 
\begin{itemize}
\item[(i)]  Every nonzero $N(m|n,d)$-submodule of $\widehat{S}_{\lambda'}(M)$
contains $Z_\Pi^{(\mathfrak c_\lambda)}$; \smallskip
\item[(ii)] the $N(m|n,d)$-invariants of $\widehat{S}_{\lambda'}(M)$ are
spanned by $Z_\Pi^{(\mathfrak c_\lambda)}$; \smallskip
\item[(iii)]  $\widehat{S}_{\lambda'}(M)$ is $N(m|n,d)$-indecomposable.
\end{itemize}
\end{theorem}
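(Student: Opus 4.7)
The plan is to prove part (ii) directly and then deduce (i) and (iii) from it. By Example~\ref{ex:Schur}, $Z_\Pi^{(\mathfrak{c}_\lambda)}$ is already known to be $N(m|n,d)$-invariant of weight $\lambda'$, so (ii) reduces to showing that no other $N(m|n,d)$-invariants exist up to scalar. The first observation is that, by Lemma~\ref{lem:BrKu}, each weight component of an $N(m|n,d)$-invariant is again $N(m|n,d)$-invariant: for any $v \in \widehat{S}_{\lambda'}(M)$ and any $\bs{i}\prec \bs{j}$, the computation $E^{(\bs{i},\bs{j})}v_\mu = E^{(\bs{i},\bs{j})}E^{(\mu)}v = \delta_{\mu, wt(\bs{j})}\hpp E^{(\bs{i},\bs{j})}v$ reduces the problem to one weight at a time.

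Under the hypothesis $\lambda_1 = l(\lambda') \le m$, a short induction on $j$ shows that every costandard tableau $\mathfrak{t} \in \text{Tab}_{[m+n]}(\lambda)$ satisfies $\mathfrak{t}(i,j) \ge j$: row~$1$ entries in $\{1,\ldots,m\}$ must be strictly increasing, and column monotonicity then propagates the bound to all rows. Consequently $wt(\mathfrak{t})_j \le \lambda'_j$ for every $j$, with simultaneous equality forcing $\mathfrak{t} = \mathfrak{c}_\lambda$. Combined with Theorem~\ref{thm:standard}, this identifies the $\lambda'$-weight space of $\widehat{S}_{\lambda'}(M)$ with $\k\hpp Z_\Pi^{(\mathfrak{c}_\lambda)}$, handling the $\mu = \lambda'$ case of uniqueness.

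The main step is to exclude nonzero $N(m|n,d)$-invariants in weight $\mu \neq \lambda'$. Given a candidate $v = \sum c_{\mathfrak{t}}\hpp Z_\Pi^{(\mathfrak{t})}$ (sum over costandard $\mathfrak{t}$ with $wt(\mathfrak{t}) = \mu$), select the extremal $\mathfrak{t}_0$ in the reading-word order $\preceq$ among those with $c_{\mathfrak{t}_0} \neq 0$. Since $\mu \neq \lambda'$, $\mathfrak{t}_0 \neq \mathfrak{c}_\lambda$, so there is a column in which $\mathfrak{t}_0$ deviates from $\mathfrak{c}_\lambda$. From this deviation I would build a specific $E^{(\bs{i},\bs{j})} \in N(m|n,d)$ so that Lemma~\ref{lem:multiply3} produces a leading term $Z_\Pi^{(\mathfrak{s}_0)}$ with $w(\mathfrak{s}_0) \prec w(\mathfrak{t}_0)$ arising from $\mathfrak{t}_0$. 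Lemma~\ref{lem:precede} then prevents any of the other $\mathfrak{t}$ summands (with $w(\mathfrak{t}) \succ w(\mathfrak{t}_0)$) from contributing the same target $\mathfrak{s}_0$, so $E^{(\bs{i},\bs{j})}v \neq 0$, contradicting the $N(m|n,d)$-invariance.

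The principal obstacle lies in this last construction: one must choose $(\bs{i},\bs{j}) \in I^2(m|n,d)$ so that the intricate double-coset sum of charges and signs in the coefficient formula of Lemma~\ref{lem:multiply3} yields a nonvanishing leading coefficient, and one must verify (invoking the Straightening Law of Lemma~\ref{lem:straighten} where needed) that $Z_\Pi^{(\mathfrak{s}_0)}$ survives as a nonzero element of the Schur-module quotient $\Gamma_\Pi^\lambda(M)/\text{Im}\,\diamondsuit_\lambda(M)$. Once (ii) is established, (iii) is immediate since any $N(m|n,d)$-decomposition $\widehat{S}_{\lambda'}(M) = V_1 \oplus V_2$ into nonzero summands would produce two linearly independent invariants; and (i) follows from the nilpotency of $N(m|n,d)$ acting on $\widehat{S}_{\lambda'}(M)$ (which follows from the weight-shifting property of each $E^{(\bs{i},\bs{j})}$ together with the finiteness of $\Lambda(m|n,d)$), which supplies a nonzero $N(m|n,d)$-invariant inside any nonzero $N(m|n,d)$-submodule $V$, and this must by (ii) be a nonzero scalar multiple of $Z_\Pi^{(\mathfrak{c}_\lambda)}$.
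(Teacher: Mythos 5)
Your proposal has two genuine gaps, one of which you have honestly flagged and one of which you have not.

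First, the flagged gap: your central step (excluding $N(m|n,d)$-invariants in weight $\mu\neq\lambda'$) is only a plan. You would need to exhibit a specific $(\bs i,\bs j)\in I^2(m|n,d)$ with $\bs i\prec\bs j$ for which the double-coset sum in Lemma~\ref{lem:multiply3} has a nonvanishing leading coefficient \emph{and} for which the proposed leading term $Z_\Pi^{(\mathfrak s_0)}$ does not die in the quotient $\Gamma_\Pi^\lambda(M)/\mathrm{Im}\,\diamondsuit_\lambda(M)$. This is exactly where the paper's argument does real work, and it sidesteps both difficulties simultaneously by always taking $\bs i = w(\mathfrak c_\lambda)$: under the hypothesis $l(\lambda')\le m$ one has $\mathfrak c_\lambda=\mathfrak{col}_\lambda$, so $\mathfrak c_\lambda$ is the \emph{only} row-costandard tableau in its $\Si_d$-orbit, and the output of $E^{(\mathfrak c_\lambda,\mathfrak t_1)}$ is a scalar multiple of the fixed basis vector $Z_\Pi^{(\mathfrak c_\lambda)}$ (no straightening needed). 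Choosing $\mathfrak t_1$ to be the $\preceq$-minimal costandard tableau in the support of $v$, Lemma~\ref{lem:precede} forces all other costandard $\mathfrak t$ with nonzero coefficient to contribute zero, and the remaining scalar $\mathrm{chr}_\pi(\mathfrak c_\lambda,\mathfrak t_1;\mathfrak c_\lambda)=\pm1$. That yields (i) directly, with (ii) and (iii) as immediate corollaries; the paper does not need the weight-by-weight reduction at all.

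Second, an unflagged error: your claim that every costandard $\mathfrak t\in\mathrm{Tab}_{[m+n]}(\lambda)$ satisfies $wt(\mathfrak t)_j\le\lambda'_j$ for every $j$ is false. Take $\lambda=(2,1)$, $m=2$, $n=0$, and let $\mathfrak t(1,1)=1$, $\mathfrak t(1,2)=2$, $\mathfrak t(2,1)=2$. This $\mathfrak t$ is costandard, but $wt(\mathfrak t)_2=2>\lambda'_2=1$. (Your argument only gives $\mathfrak t(i,j)\ge j$, which bounds \emph{which columns} an entry $j$ can occupy, not \emph{how many times} $j$ occurs.) The conclusion you want --- that $\mathfrak c_\lambda$ is the unique costandard tableau of weight exactly $\lambda'$, hence the $\lambda'$-weight space is one-dimensional --- is still true, but for a different reason: if $wt(\mathfrak t)=\lambda'$ then all entries of $\mathfrak t$ lie in $\{1,\dots,\lambda_1\}\subset[m+n]_0$, so rows and columns of $\mathfrak t$ are strictly increasing; the $\lambda'_1$ ones must then fill column~$1$, the $\lambda'_2$ twos column~$2$, and so on, forcing $\mathfrak t=\mathfrak c_\lambda$. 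You should replace the $wt(\mathfrak t)_j\le\lambda'_j$ step by this argument if you want to keep the weight-space reduction, but as noted the paper's route avoids needing it.

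Your deductions of (iii) from (ii) (via Lemma~\ref{lem:invariant}) and of (i) from (ii) (via nilpotency of the weight-lowering subalgebra $N(m|n,d)$) are fine as stated. The overall architecture --- prove (ii), deduce (i) and (iii) --- is reasonable and different from the paper's (which proves (i) directly), but it does not become a proof until the extremal-term computation is executed and the false lemma is repaired.
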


\begin{proof}
Proof of (i).  Suppose  $V\subset \widehat{S}_{\lambda'} M$ is a nonzero
$N(m|n,d)$-submodule, and let $v\in V$. Of course, we may assume that $v$ 
is not contained in the $\k$-span of $Z_\Pi^{(\mathfrak c_\lambda)}$.
Using Theorem \ref{thm:standard}, write $v$ as a linear combination 
$$ v =\sum_{\mathfrak t} a_{\mathfrak t}\hpp Z_\Pi^{(\mathfrak t)},$$
summing over all costandard $\mathfrak t \in \text{Tab}_{m|n}(\lambda)$, 
for some coefficients $a_{\mathfrak t} \in \k$.   

Consider any costandard $\mathfrak t_0$ such that $\mathfrak t_0 \neq \mathfrak c_\lambda$. 
Then $\mathfrak c_\lambda \prec \mathfrak t_0$, so that 
$E^{(\mathfrak c_\lambda, \mathfrak t_0)} \in N(m|n,d)$. 
Now since $l(\lambda') \leq m$, we have $\mathfrak c_\lambda(i,j) = j$ for all $(i,j) \in \triangle_\lambda$ 
(i.e. $\mathfrak c_\lambda= \mathfrak{col}_\lambda$). 
Hence, if $\mathfrak s$ is row costandard and $w(\mathfrak s) \sim w(\mathfrak c_\lambda)$, then we
must have $\mathfrak s = \mathfrak c_\lambda$.  
From Lemma \ref{lem:multiply3}, we thus have
$$ E^{(w(\mathfrak c_\lambda),w(\mathfrak t_0))} v\  =\ 
\sum_{\mathfrak t} \sum_{w(\mathfrak s)\sim w(\mathfrak c_\lambda)} a_{\mathfrak t}\hpp
C_{w(\mathfrak c_\lambda), w(\mathfrak t_0); \mathfrak s, \mathfrak t} Z_\Pi^{(\mathfrak s)}\ 
=\  
\sum_{\mathfrak t}  a_{\mathfrak t}\hpp
C_{w(\mathfrak c_\lambda), w(\mathfrak t_0); \mathfrak c_\lambda, \mathfrak t} Z_\Pi^{(\mathfrak c_\lambda)}.
$$
Now if $\mathfrak r \in \text{Tab}_{m|n}(\lambda)$, then notice that 
$(w(\mathfrak c_\lambda), w(\mathfrak r)) \sim (w(\mathfrak c_\lambda), w(\mathfrak t_0))$
if and only if $\mathfrak r \approx_\text{C} \mathfrak t_0$, where we again use the fact that
$l(\lambda')\leq m$.  Notice first that  $\mathfrak r \approx_\text{R} \mathfrak t_0$ and 
$\mathfrak r \approx_\text{C} \mathfrak t_0$ implies that $\mathfrak r = \mathfrak t_0$.  
Since $\text{sgn}(w(\mathfrak c_\lambda),w(\mathfrak t_0); \mathfrak c_\lambda, \mathfrak t_0)=1$,
we thus have
$$C_{w(\mathfrak c_\lambda), w(\mathfrak t_0); \mathfrak c_\lambda, \mathfrak t_0}
=   \text{chr}_\pi(\mathfrak c_\lambda, \mathfrak t_0; \mathfrak c_\lambda).$$
On the other hand, we must have 
$C_{w(\mathfrak c_\lambda), w(\mathfrak t_0); \mathfrak c_\lambda, \mathfrak t}
= 0$, unless 
$ \mathfrak t \stackrel{\text{RC}}{\longrightarrow} \mathfrak t_0$.  It thus follows from 
Lemma \ref{lem:precede} that 
$C_{w(\mathfrak c_\lambda), w(\mathfrak t_0); \mathfrak c_\lambda, \mathfrak t}= 0$
whenever $w(\mathfrak t) \prec w(\mathfrak t_0)$.
\smallskip

Let $\mathfrak t_1$ be the unique costandard tableau with the property: 
$a_{\mathfrak t_1}\neq 0$ and $w(\mathfrak t_0) \succeq w(\mathfrak s)$ 
for any costandard $\mathfrak s$ such that $a_{\mathfrak s}\neq 0$.  
It follows from the above arguments that
$$E^{(\mathfrak c_\lambda, \mathfrak t_1)} v = \pm a_{\mathfrak t_1} Z_\Pi^{(\mathfrak c_\lambda)}.$$
Since $a_{\mathfrak t_0}\neq 0$, it follows that $Z_\Pi^{(\mathfrak c_\lambda)}$ is in the 
$N(m|n,d)$-span of  $v$.
%
%\smallskip

Proof of (ii).  The proof of (i) shows that $N(m|n,d)\hp v\neq 0$ for any $v\in \widehat{S}_{\lambda'}M$
which is not contained in the $\k$-span of $Z_\Pi^{(\mathfrak c_\lambda)}$.  So (ii) follows from Example
\ref{ex:Schur}.
%
%\smallskip

Proof of (iii).  Suppose that we have a decomposition $\widehat{S}_\lambda M = V\oplus W$ into 
$N(m|n,d)$-submodules.  From part (i) it follows that $Z_\Pi^{(\mathfrak c_\lambda)}$ belongs to
both $V$ and $W$ simultaneously, which is impossible.
\end{proof}

\begin{proposition}\label{prop:indecomp}
Let $\lambda$ be any partition with $d=|\lambda|$.  
Then the Schur superfunctor 
$\widehat{S}_\lambda$ is an indecomposable object of the
category $\Pol^\I_d$.  I.e., if
$S,T\in \Pol^\I_d$ are such that $\widehat{S}_{\lambda}= S\oplus T$, 
then either $S=0$ or $S= \widehat{S}_\lambda$.
\end{proposition}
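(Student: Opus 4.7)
My plan is to pull the indecomposability back from the Schur supermodule side via the evaluation equivalence. The goal is to reduce the statement to Theorem~\ref{thm:indecomp}, which already provides the decisive indecomposability of $\widehat{S}_\lambda(M)$ as an $N(m|n,d)$-module in a suitable range of $(m,n)$.

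First I would fix $m,n\geq d$, set $M=\k^{m|n}$, and invoke the equivalence of $\Svec$-enriched categories $\ev^{\hp\I}_{m|n}: \Pol^\I_d \stackrel{\sim}{\longrightarrow} S(m|n,d)\hpp \smod$ recalled above from \cite{Axtell}. Given any hypothetical splitting $\widehat{S}_\lambda = S\oplus T$ in $\Pol^\I_d$, I would apply the equivalence to obtain a direct-sum decomposition
$$
\widehat{S}_\lambda(M)\ =\ S(M)\oplus T(M)
$$
in the category of $S(m|n,d)$-supermodules. Since $N(m|n,d)\subset S(m|n,d)$, this is automatically also an $N(m|n,d)$-module decomposition.

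Next I would apply Theorem~\ref{thm:indecomp} with the partition $\lambda'$ playing the role of ``$\lambda$'' in that theorem. The required hypothesis $l((\lambda')')=l(\lambda)\leq m$ is immediate from $|\lambda|=d\leq m$, and the condition $\lambda'\in\Lambda^+(m|n,d)$ reduces to $l(\lambda')=\lambda_1\leq m+n$, which also follows from $m,n\geq d$. Part (iii) of Theorem~\ref{thm:indecomp} then asserts that $\widehat{S}_{(\lambda')'}(M)=\widehat{S}_\lambda(M)$ is $N(m|n,d)$-indecomposable, so exactly one of the summands $S(M),T(M)$ must vanish. Running the equivalence $\ev^{\hp\I}_{m|n}$ backwards (it certainly reflects the zero object) then concludes that one of $S$ or $T$ is the zero functor.

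The substantive work has already been done in proving Theorem~\ref{thm:indecomp}, namely exhibiting the canonical highest-weight vector $Z_\Pi^{(\mathfrak c_\lambda)}$ that lies in every nonzero $N(m|n,d)$-submodule of $\widehat{S}_\lambda(M)$. The remaining argument here is essentially formal: checking that an equivalence of $\Svec$-enriched abelian categories transports direct-sum decompositions faithfully in both directions. Hence I do not anticipate a genuine obstacle, and the proposition should follow as a short corollary of the earlier theorem.
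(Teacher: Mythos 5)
Your argument is correct and is essentially the same as the paper's proof, which also reduces to Theorem~\ref{thm:indecomp}.(iii) via the evaluation equivalence $\Pol^\I_d \simeq S(m|n,d)\hpp\smod$ for $m,n\gg 0$. You have merely spelled out the hypothesis-checking (applying the theorem with $\lambda'$ in place of $\lambda$, and verifying $l(\lambda)\leq m$ and $\lambda_1\leq m+n$) that the paper leaves implicit.
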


\begin{proof}
This follows from Theorem \ref{thm:indecomp}.(iii), since
by \cite[Thm.~4.2]{Axtell}, the evaluation functor is an equivalence of categories, 
$\Pol^\I_d \equi S(m|n,d)\hpp \smod$, for $m,n \gg 0$.
\end{proof}

%%%%%%%%%%%%%%%%%

%sub
\subsection{The Schur superalgebra $Q(n,d)$}
We next discuss  type $\II$ Schur superfunctors.
For this, we need to consider  the Schur superalgebra
$$Q(n,d) := \Gamma^d\hpp \End_{\C_1}(\U_r(1)^n).$$
Let us write $V_n = \U(1)^n$. 
Recall from \cite[Sec.~4]{Axtell} that there is an evaluation functor 
$$\ev^{\hp\II}_{n}\hp: \hpp \Pol^{\hp \II}_d \to Q(n,d)\hpp \smod, 
\qquad T \mapsto T(V_{n}),$$ 
%, given by mapping $T \mapsto T(V_{n})$, 
which is an equivalence if $n\geq d$.
It follows that $\widehat{S}^\II_\lambda(V_n)$ has the structure of 
left $Q(n,d)$-supermodule. 
We note that %, in contrast to the type $\I$ case, 
this supermodule is usually not indecomposable.
Hence, $\widehat{S}^{\hp \II}_{\lambda}$, is 
usually not an indecomposable object of the category $\Pol^\II_d$.
%%%
\smallskip

Now as a superspace $V_n \simeq \k^{n|n}$. 
Let us identify $\End_{\C_1}(V_n)$ as a subset of $\End(V_n)$. 
Then there is a corresponding superalgebra embedding 
$Q(n,d) \hookrightarrow S(n|n,d)$.
For $1\leq i,j \leq n$, we may consider the following elements
of  $\End_{\C_1}(V_n)$: 
%$$
% {}^{(0)}E_{i,j} := E_{i,j} + E_{n+i,n+j} 
%\qquad \text{and} \qquad
%{}^{(1)}E_{i,j} := E_{i,n+j} + E_{n+i,j}. 
%$$
$$
E_{0;i,j} := E_{i,j} + E_{n+i,n+j} 
\qquad \text{and} \qquad
E_{1;i,j} := E_{i,n+j} + E_{n+i,j}. 
$$
It follows from \cite[Example 2.6]{Axtell} that the set  
%$\{ {}^{(0)}\hmm E_{i,j}, {}^{(1)}\hmm E_{i,j} : 1\leq i,j \leq n \}$
$\{ E_{\varepsilon; i,j}, : \varepsilon\in \Z_2, 1\leq i,j \leq n \}$
is a basis of $\End_{\C_1}(V_n)$.  We order this basis lexicographically:
$E_{0;i,j} < E_{1; k,l}$, for all $1\leq i,j,k,l\leq n$, and 
$E_{\epsilon; i,j} \leq E_{\epsilon; k,l}$ if $(i,j) \preceq (k,l)$.
%%%
\smallskip

Let $I(n,d) = I(n|0,d)$.  Given $\varepsilon \in (\Z_2)^d$ and 
$\bs i,\bs j\in I(n,d)$, let $E^{(\varepsilon; \bs i,\bs j)}$ denote
the element of 
$Q(n,d) = \Gamma^d \End_{\C_1}(V_n)$ which is dual to the 
monomial 
$$\check{E}_{\varepsilon_1; i_1, j_1} \cdots 
\check{E}_{\varepsilon_d; i_d, j_d} \hpp \in\hpp 
%$$\prod_{k=1}^d \check{E}_{\varepsilon_k; i_k, j_k} \hpp \in\hpp 
S^d( \End_{\C_1}(V_n)^*),$$
%where the product is taken in the lexicographic order and 
where  $\{\check{E}_{\varepsilon; i, j}\}$ denotes the basis
of $\End_{\C_1}(V_n)^*$ dual to the basis $\{E_{\varepsilon; i, j}\}$. 
%of $\End_{\C_1}(V_n)$.
%
If $\varepsilon \in (\Z_2)^d$, then we say a pair $(\bs i,\bs j)
\in I(n,d) \times I(n,d)$ is {\em $\varepsilon$-strict} if 
$(i_k,j_k) \neq (i_l,j_l)$ whenever $\varepsilon_k = \varepsilon_l =1$.
Then $E^{(\varepsilon; \bs i,\bs j)} \neq 0$ if and only if $(\bs i,\bs j)$
are $\varepsilon$-strict.

Notice  there is a unique action of the Hyperoctahedral group, 
$H_d = \Z_2 \wr\Si_d$, on the set $(\Z_2)^d \times I(n,d) \times I(n,d)$,
such that:
$\Si_d$ acts on $I(n,d)\times I(n,d)$ via the diagonal action as usual, 
and $(\Z_2)^d \subset H_n$ acts on itself via point-wise addition.  
Let $\Omega(n,d)$ denote a set of $H_n$-orbit representatives in 
$(\Z_2)^d \times I(n,d) \times I(n,d)$. Then
it is not difficult to see that the set 
$$
\{ E^{(\varepsilon; \bs i,\bs j)} : (\varepsilon; \bs i, \bs j) \in \Omega(n,d)
\text{ and } (\bs i, \bs j)  \text{ is } \epsilon\text{-strict}\}$$
is a basis of $Q(n,d)$.

Next, we consider weight idempotents for $Q(n,d)$.  Let us write 
$\Lambda(n) = \Lambda(n|0)$ and $\Lambda(n,d) = \Lambda(n|0,d)$. Denote 
$E^{(0;\nu)} = E^{(0;\bs i^\nu, \bs i^\nu)}\in Q(n,d)$ for any $\nu \in \Lambda(n,d)$,
where $0 = (0, \dots, 0)\in (\Z_2)^d$.
We then have the following, which is proved in \cite[Lem.~6.1]{BKprojective} 
using different notation.

\begin{lemma}
Suppose $\bs i, \bs j \in I(n,d)$, $\varepsilon\in (\Z_2)^d$ and $\nu\in 
 \Lambda(n,d)$.  Then 
\begin{align*}
E^{(0;\nu)}\circ E^{(\varepsilon; \bs i, \bs j)} = 
\delta_{\nu, wt(\bs i)}  E^{(\varepsilon; \bs i, \bs j)},
%\Rc^{{}^\lambda}\circ (\Rc^A\cdot\Sc^B) = \begin{cases}
% \Rc^A\cdot \Sc^B& \text{if } \lambda = \alpha^A + \alpha^B\\
% 0& \text{otherwise},
% \end{cases}% &\qquad & \qquad %
% \end{align*}  %
\quad 
E^{(\varepsilon; \bs i, \bs j)} \circ E^{(0;\nu)} =
\delta_{wt(\bs j), \nu} E^{(\varepsilon; \bs i, \bs j)}.
%\begin{align*}
%\qquad&\qquad &\qquad 
%(\Rc^A\cdot \Sc^B) \circ \Rc^{{}^\nu} =\begin{cases}
%\Rc^A\cdot \Sc^B &\text{if } \beta^A+\beta^B = \nu\\
%0& \text{otherwise}.
%\end{cases}
\end{align*}
In particular, $\{E^{(0;\nu)}:\ \nu\in  \Lambda(n,d)\}$ gives a  set of mutually
orthogonal even idempotents whose sum is the identity of $Q(n,d)$.
\end{lemma}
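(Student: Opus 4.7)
The plan is to exploit the faithful left action of $Q(n,d)$ on $V_n^{\otimes d}$ induced by the embeddings $Q(n,d) \hookrightarrow S(n|n,d) \hookrightarrow \End(V_n^{\otimes d})$, where $V_n \simeq \k^{n|n}$ as a vector superspace. Since this action is faithful, it suffices to verify both identities after applying each side to an arbitrary basis vector $v_{\bs l}$ of $V_n^{\otimes d}$, where $\bs l$ ranges over $I(n|n,d)$.

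The first step is to describe the action of $E^{(0;\nu)}$. Since $E_{0;i,i} \in \End_{\C_1}(V_n)$ is the projection onto $\k v_i \oplus \k v_{n+i}$, the element $E^{(0;\nu)}$, viewed in $\End(V_n^{\otimes d})$, is the projection onto the span of those $v_{\bs l}$ whose image under the ``merging'' map
\[
\overline{wt}:\Lambda(n|n,d)\to\Lambda(n,d), \qquad \mu \mapsto (\mu_1 + \mu_{n+1}, \dots, \mu_n + \mu_{2n}),
\]
equals $\nu$; that is, $E^{(0;\nu)} v_{\bs l} = \delta_{\nu, \overline{wt}(\bs l)}\, v_{\bs l}$.

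Next, I would expand $E^{(\varepsilon;\bs i,\bs j)}$ under the embedding $\Gamma^d\End_{\C_1}(V_n) \hookrightarrow \End_{\C_1}(V_n)^{\otimes d}$ via a formula analogous to (\ref{eq:tensor4}) and apply the result to $v_{\bs l}$. Each elementary factor $E_{\varepsilon_s; i_s, j_s}$ sends $v_k$ to a scalar multiple of $v_{i_s}$ or $v_{n+i_s}$ whenever $k \in \{j_s, n+j_s\}$, and to $0$ otherwise. A direct inspection therefore shows that (a) $E^{(\varepsilon;\bs i,\bs j)} v_{\bs l}$ vanishes unless $\overline{wt}(\bs l) = wt(\bs j)$, and (b) when nonzero, the output lies in the subspace of $\overline{wt}$-weight $wt(\bs i)$. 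Combining (a) and (b) with the description of $E^{(0;\nu)}$ yields both identities at once, and faithfulness transfers them from $\End(V_n^{\otimes d})$ back to $Q(n,d)$.

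For the ``in particular'' part, the orthogonality $E^{(0;\nu)}\circ E^{(0;\nu')} = \delta_{\nu,\nu'} E^{(0;\nu)}$ is the special case $(\varepsilon;\bs i,\bs j) = (0;\bs i^{\nu'},\bs i^{\nu'})$; evenness is immediate since each $E_{0;i,j}$ is an even endomorphism; and completeness $\sum_\nu E^{(0;\nu)} = 1_{Q(n,d)}$ follows because the projections $E^{(0;\nu)}$ partition $V_n^{\otimes d}$ according to $\overline{wt}$. I expect the main obstacle to be the sign and charge bookkeeping in the expansion of $E^{(\varepsilon;\bs i,\bs j)}$ over its $\Si_d$-orbit, analogous to (\ref{eq:tensor4}) in the $S(m|n,d)$ setting; fortunately, the qualitative vanishing and selection statements (a) and (b) depend only on the weight support of each tensor factor, so the precise sign and charge coefficients drop out of the argument.
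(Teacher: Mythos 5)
Your proof is correct, and it is a genuine proof rather than what the paper offers, which is only a citation to \cite[Lem.~6.1]{BKprojective}. Your approach mirrors the one the paper does use for the type-$\I$ analogue, Lemma \ref{lem:BrKu}: there the weight-idempotent identities are deduced from the multiplication formula of Lemma \ref{lem:multiply2}.(ii), which is itself established through the faithful embedding $S(m|n,d)\hookrightarrow\End(M^{\otimes d})$. You bypass the intermediate multiplication formula and verify the identities directly on $V_n^{\otimes d}$, which is cleaner for the present purpose. Two points are worth making explicit in a write-up. First, the composite $Q(n,d)\hookrightarrow S(n|n,d)\hookrightarrow\End(V_n^{\otimes d})$ is an injective superalgebra map because $\End_{\C_1}(V_n)\subset\End(V_n)$ is a subsuperalgebra, so $\Gamma^d\End_{\C_1}(V_n)$ is a subsuperalgebra of $\Gamma^d\End(V_n)\simeq\End_{\k\Si_d}(V_n^{\otimes d})$; faithfulness then licenses checking everything on basis vectors. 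Second, your description of $E^{(0;\nu)}$ as the projection onto the $\overline{wt}$-weight-$\nu$ subspace of $V_n^{\otimes d}$ agrees with the paper's explicit formula $E^{(0;\nu)}\mapsto\sum_{\mu^+ + \mu^- = \nu}E^{(\mu)}$ used in the proof of Corollary \ref{cor:char}, combined with Lemma \ref{lem:BrKu}; citing those makes that step immediate. You are also right that the sign and charge coefficients arising in the $H_d$-orbit expansion of $E^{(\varepsilon;\bs i,\bs j)}$ do not enter: your observations (a) and (b) concern only which tensor factors survive and in which weight space the output lands, not the scalar multiples, so the bookkeeping genuinely drops out.
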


Thus for any $V\in {Q(n,d)}\hpp \smod$, we again have a decomposition 
\begin{equation}\label{eq:decomp2}
 V= \bigoplus_{\nu\hp \in\hp \Lambda(n,d)} V_\nu,
 \end{equation}
where the weight space $V_\nu =  E^{(0;\nu)} V $.

%%%%%%%%%%%%%%%%%

%sub
\subsection{Formal characters}

Let $X(n)$ denote the free polynomial algebra $\Z[x_1, \dots, x_n]$.  
The ring $X(n)$ has the basis of monomials:
$x^\nu = x_1^{\nu_1}\dots x_n^{\nu_n}$, for $\nu \in \Lambda(n)$.
Furthermore, $$X(n)= \bigoplus_{d=0}^\infty X(n,d)$$ is a graded ring,
where the component $X(n,d)$ is spanned by $x^\nu$ such
that $\nu \in \Lambda(n,d)$.

Now let $Z(m|n)$ denote the ring $\Z[x_1,\dots, x_m; y_1 \dots, y_n]
= X(m) \otimes Y(n)$ . 
Then $Z(m|n)$ is also a graded ring 
$$Z(m|n) = \bigoplus_{d=0}^\infty Z(m|n,d),$$ 
and $Z(m|n,d)$ has a $\Z$-basis given by the monomials
$$ z^\mu\hpp =\hpp z_1^{\mu_1} \dots  z_{m+n}^{\mu_{m+n}}, \quad
\text{ where }\quad z_1= x_1,\ \dots\ z_m = x_m,\ z_{m+1}=y_1,\ \dots\ z_{m+n}=y_n, $$
such that $\mu \in \Lambda(m|n,d)$. 
%
%\smallskip
%\\

\begin{definition}
Suppose that $V \in S(m|n,d)\hpp \smod$ and $W\in Q(n,d)\hpp \smod$.  
Recalling the weight space decompositions (\ref{eq:decomp1}) and 
(\ref{eq:decomp2}), 
%  $V = \bigoplus_{\mu \in \Lambda(m|n,d)} V_\mu$, 
we define the {\em formal characters} 
$$
\text{ch}(V) = \sum_{\mu\in  \Lambda(m|n,d)}\ \dim\hp (V_\mu)\hpp z^\mu
\quad
\text{and}\quad \text{ch}(W) = \sum_{\nu\in  \Lambda(n,d)}\ \dim\hp (W_\nu)\hpp x^\nu,
$$
which belong to $Z(m|n)$ and $X(n)$, respectively.
\end{definition}

We now consider the weight spaces of the $S(m|n,d)$-supermodule $\Gamma_\Pi^{\lambda'}(M)$,
with $M=\k^{m|n}$. 
Let us define the weight of a tableau $\mathfrak t$ to be the
weight of its associated word; i.e., $wt(\mathfrak t) = wt(w(\mathfrak t))$. 
Using Lemma \ref{lem:multiply3} it can be checked that 
$$ E^{(\mu)} Z_\Pi^{(\mathfrak t)} = \delta_{wt(\mathfrak t),\mu} Z_\Pi^{(\mathfrak t)}.$$
It follows that the weight space
$(\Gamma_\Pi^{\lambda'}M)_\mu$ has basis given by the set of all 
$Z_\Pi^{(\mathfrak t)}$ such that $\mathfrak t$ is row costandard and $wt(\mathfrak t) = \mu$.
The formal character as an $S(m|n,d)$-supermodule is then
equal to
$$
\text{ch}(\Gamma_\Pi^{\lambda'}M) 
= \sum_{\substack{ \text{row costandard}\\[.05cm] \mathfrak t\hp \in\hp \text{Tab}_{m|n}(\lambda') }} z^{wt(\mathfrak t)} 
\hpp =\hpp  \sum_{\substack{ \text{row standard}\\[.05cm]\mathfrak s\hp \in\hp \text{Tab}_{m|n}(\lambda)}} z^{wt(\mathfrak s)},
$$
since $\mathfrak t$ is costandard if and only if its conjugate, $\mathfrak s = \mathfrak t'$, is standard.  (Recall
that $\mathfrak t'(i,j) = \mathfrak t(j,i)$ for all $(i,j)\in\triangle_{\lambda'}$.) 
It then follows from Theorm \ref{thm:standard} that
\begin{equation}\label{eq:char}
\text{ch}(\widehat{S}_{\lambda}(\k^{m|n})) 
= \sum_{\substack{ \text{costandard}\\[.07cm] \mathfrak t\hp \in\hp \text{Tab}_{m|n}(\lambda')}} z^{wt(\mathfrak t)} 
\hpp =\hpp  \sum_{\substack{ \text{standard}\\[.07cm] \mathfrak s\hp \in\hp \text{Tab}_{m|n}(\lambda)}} z^{wt(\mathfrak s)}.
\end{equation}

Suppose given partitions $\mu \subset \lambda$.  
Then recall from \cite[ch.~I]{Mac} the definition of the {\em skew Schur function},
$s_{\lambda/\mu}(x_1, \dots, x_n)$, which belongs to the ring 
$\mathsf{\Lambda}_n(x) =X(n)^{\Si_n}$ of symmetric functions in $n$ variables. 
%which a symmetric function. 
One may check using \cite[I.5.12]{Mac} that we have
\begin{equation}\label{eq:Schur}
s_{\lambda/\mu}(x_1, \dots, x_m)
\hpp =\hpp  \sum_{\substack{ \text{standard}\\[.07cm] \mathfrak t\hp \in\hp \text{Tab}_{m|0}(\lambda/\mu)}} z^{wt(\mathfrak t)},
\qquad
 s_{\lambda'/\mu'}(y_1, \dots, y_n)
\hpp =\hpp  \sum_{\substack{ \text{standard}\\[.07cm] \mathfrak s\hp \in\hp \text{Tab}_{0|n}(\lambda/\mu)}} z^{wt(\mathfrak s)}.
\end{equation}
The {\em hook Schur function} is defined in \cite[6.3]{BR} to be the sum
$$
hs_\lambda(x_1, \dots, x_m; y_1, \dots, y_n) = 
\sum_{\mu\subset \lambda} s_\mu(x_1, \dots, x_m) s_{\lambda'/\mu'}(y_1, \dots, y_n),
$$
which may be considered as an element of the ring
$\mathsf{\Lambda}_m(x) \otimes_\Z \mathsf{\Lambda}_n(y) \subset Z(m|n)$.  

\begin{proposition}\label{prop:char}
The character of the Schur supermodule $\widehat{S}_\lambda(\k^{m|n})$  
is equal to the hook Schur function
$hs_\lambda(x_1, \dots, x_m; y_1, \dots, y_n)$.
%$$ch(\widehat{S}_\lambda(\k^{m|n})) = hs_\lambda(x_1, \dots, x_m; y_1, \dots, y_n).$$
\end{proposition}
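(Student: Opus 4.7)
The plan is to use the tableau expression (\ref{eq:char}) for the character together with a bijective decomposition of each standard tableau into an ``$X$-part'' and a ``$Y$-part,'' matching these pieces against the two families of ordinary (skew) Schur functions via (\ref{eq:Schur}).

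First I would start from (\ref{eq:char}), which gives
\[
\text{ch}(\widehat{S}_\lambda(\k^{m|n})) \;=\; \sum_{\substack{\text{standard}\\ \mathfrak s\in \text{Tab}_{m|n}(\lambda)}} z^{wt(\mathfrak s)},
\]
and recall that $\Zc=\Xc\sqcup\Yc$ with $X_1<\cdots<X_m<Y_1<\cdots<Y_n$. Given such a standard tableau $\mathfrak s$, I would define $\mu_i$ to be the number of entries in the $i$-th row of $\mathfrak s$ that belong to $\Xc$, and let $\mu=(\mu_1,\mu_2,\ldots)$. Using that $\mathfrak s$ is row standard (so the $X$-entries precede the $Y$-entries in each row, since $X_i<Y_j$) and column standard (so an $X$-entry at position $(i+1,j)$ forces an $X$-entry at $(i,j)$, because all $Y$'s exceed all $X$'s), I would verify that $\mu$ is a partition with $\mu\subset\lambda$.

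Next I would split $\mathfrak s=\mathfrak s_X\sqcup\mathfrak s_Y$, where $\mathfrak s_X$ is the restriction to the sub-diagram $\triangle_\mu$ and $\mathfrak s_Y$ to $\triangle_{\lambda/\mu}$. From the $Z_2$-graded (co)standard conditions I would check that $\mathfrak s_X$ is weakly increasing along rows and strictly increasing down columns (because only $X$-entries may repeat in rows and none may repeat in columns among $X$'s), i.e.\ $\mathfrak s_X$ is a semistandard tableau of shape $\mu$ with entries in $[m]$. Symmetrically, $\mathfrak s_Y$ is strictly increasing along rows and weakly increasing down columns with entries in $[n]$; taking its conjugate $(\mathfrak s_Y)'$ produces a semistandard tableau of shape $\lambda'/\mu'$ with entries in $[n]$. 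The inverse assignment (glue an SSYT of shape $\mu$ on $\Xc$ with the conjugate of an SSYT of shape $\lambda'/\mu'$ on $\Yc$) is immediate, so this sets up a bijection
\[
\{\text{standard }\mathfrak s\in \text{Tab}_{m|n}(\lambda)\} \;\longleftrightarrow\; \bigsqcup_{\mu\subset\lambda}\ \text{SSYT}_{[m]}(\mu)\times \text{SSYT}_{[n]}(\lambda'/\mu').
\]

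Finally, since the weight of $\mathfrak s$ is the concatenation of the weights of $\mathfrak s_X$ and $\mathfrak s_Y$, one has $z^{wt(\mathfrak s)}=x^{wt(\mathfrak s_X)}\,y^{wt(\mathfrak s_Y)}$. Summing and applying the two identities in (\ref{eq:Schur}) gives
\[
\text{ch}(\widehat{S}_\lambda(\k^{m|n})) \;=\; \sum_{\mu\subset\lambda} s_\mu(x_1,\ldots,x_m)\,s_{\lambda'/\mu'}(y_1,\ldots,y_n) \;=\; hs_\lambda(x_1,\ldots,x_m;y_1,\ldots,y_n),
\]
as required. The only nontrivial step is the bijective decomposition; everything else is a combination of the Standard Basis Theorem \ref{thm:standard}, the definition of the $\Z_2$-graded (co)standard conditions on tableaux, and the classical identities in (\ref{eq:Schur}). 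The main potential pitfall is just bookkeeping: one must keep straight which axis (rows vs.\ columns) carries the strict inequalities for $X$- and $Y$-entries when passing to the conjugate shape for $\mathfrak s_Y$.
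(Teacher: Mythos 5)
Your proof is correct and takes essentially the same route as the paper: decompose a standard tableau in $\text{Tab}_{m|n}(\lambda)$ into an $X$-part of shape $\mu$ and a $Y$-part of shape $\lambda/\mu$, then invoke (\ref{eq:char}) and (\ref{eq:Schur}). The explicit conjugation of the $Y$-part is not strictly needed since (\ref{eq:Schur}) already phrases the $Y$-sum over standard $\mathfrak s\in\text{Tab}_{0|n}(\lambda/\mu)$ as $s_{\lambda'/\mu'}(y)$, but including it does no harm.
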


\begin{proof}
Any standard tableaux $\mathfrak t \in \text{Tab}_{m|n}(\lambda)$ may be decomposed
as a double tableau $\mathfrak t(1)| \mathfrak t(2)$ for standard $\mathfrak t(1) \in \text{Tab}_{m|0}(\mu)$
and $\mathfrak t(2) \in \text{Tab}_{0|n}(\lambda/\mu)$ for some $\mu\subset \lambda$.
Hence, the result follows from (\ref{eq:char}) and (\ref{eq:Schur}).
\end{proof}

Recall from \cite[Ch.~III]{Mac}, \cite{WanWang} the {\em Hall-Littlewood symmetric function}, 
$$S_\lambda(x_1, \dots, x_n) = hs_\lambda(x_1, \dots, x_n;x_1, \dots, x_n)
=\sum_{\mu\subset \lambda} s_\mu(x_1, \dots, x_m) s_{\lambda'/\mu'}(x_1, \dots, x_n),$$
defined for $\lambda \in \Lambda(n)$.
We then have the following.

\begin{corollary}\label{cor:char}
The character of the $Q(n,d)$-supermodule $\widehat{S}^{\hp \II}_\lambda(V_n)$ is equal
to the Hall-Littlewood symmetric function $S_\lambda(x_1, \dots, x_n)$.
\end{corollary}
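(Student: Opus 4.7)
The plan is to deduce Corollary \ref{cor:char} from Proposition \ref{prop:char} by comparing the $Q(n,d)$-weight spaces to the $S(n|n,d)$-weight spaces under the embedding $Q(n,d) \hookrightarrow S(n|n,d)$. First I would identify $\widehat{S}^{\II}_\lambda(V_n)$ with $\widehat{S}_\lambda(\k^{n|n})$ as a vector superspace. Indeed, $\widehat{S}^{\II}_\lambda = \text{Res}_{\C,d}(\widehat{S}_\lambda)$, so $\widehat{S}^{\II}_\lambda(V_n) = \widehat{S}_\lambda(F_{\C,d}(V_n))$, and since $V_n\simeq \U_r(1)^n$ has underlying superspace $\k^{n|n}$ (cf.\ \cite[Ex.~2.6]{Axtell}), we obtain an isomorphism of vector superspaces $\widehat{S}^{\II}_\lambda(V_n)\simeq \widehat{S}_\lambda(\k^{n|n})$. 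The $Q(n,d)$-action is then simply the restriction of the $S(n|n,d)$-action along the embedding $Q(n,d)\hookrightarrow S(n|n,d)$ induced by the inclusion $\End_{\C_1}(V_n)\hookrightarrow \End(V_n)$.

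The core step is to relate the two notions of weight. Under the embedding $Q(n,d)\hookrightarrow S(n|n,d)$, I would check that the weight idempotent $E^{(0;\nu)}\in Q(n,d)$ corresponds to the sum
\[
E^{(0;\nu)}\ =\ \sum_{\substack{\mu\in\Lambda(n|n,d)\\ \mu_i+\mu_{n+i}\hpp =\hpp \nu_i}} E^{(\mu)}
\]
of weight idempotents of $S(n|n,d)$. This reduces to the analogous statement for the generator $E_{0;i,i} = E_{i,i}+E_{n+i,n+i}\in \End_{\C_1}(V_n)\subset \End(V_n)$, which is a diagonal element acting with eigenvalue $\mu_i+\mu_{n+i}$ on any $S(n|n,d)$-weight vector of weight $\mu$; extending this observation multiplicatively through the iterated comultiplication formula \eqref{eq:tensor4} gives the decomposition above. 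Consequently, for any $V\in S(n|n,d)\hpp\smod$ regarded as a $Q(n,d)$-supermodule by restriction, the $Q(n,d)$-weight space of weight $\nu\in\Lambda(n,d)$ is
\[
V_\nu\ =\ \bigoplus_{\substack{\mu\in\Lambda(n|n,d)\\ \mu_i+\mu_{n+i}\hpp =\hpp \nu_i}} V_\mu.
\]

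From this it follows immediately that the $Q(n,d)$-character is obtained from the $S(n|n,d)$-character by the substitution $y_i\mapsto x_i$: for each basis monomial $z^\mu = x_1^{\mu_1}\cdots x_n^{\mu_n} y_1^{\mu_{n+1}}\cdots y_n^{\mu_{2n}}$ appearing in the $S(n|n,d)$-character of $V$, the specialization $y_i=x_i$ produces $x_1^{\mu_1+\mu_{n+1}}\cdots x_n^{\mu_n+\mu_{2n}}$, which is precisely the contribution of $V_\mu$ to $\text{ch}(V)$ as a $Q(n,d)$-supermodule. Applying this to $V = \widehat{S}_\lambda(\k^{n|n})$ and invoking Proposition \ref{prop:char}, I obtain
\[
\text{ch}(\widehat{S}^{\hp\II}_\lambda(V_n))\ =\ hs_\lambda(x_1,\dots,x_n;\hp x_1,\dots,x_n)\ =\ S_\lambda(x_1,\dots,x_n),
\]
the last equality being the definition of the Hall--Littlewood symmetric function recalled just before the Corollary.

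The only real obstacle is the identification of $E^{(0;\nu)}$ as a sum of $E^{(\mu)}$, which is a bookkeeping check against the definitions of the two Schur superalgebras and the explicit formula \eqref{eq:tensor4}; once this is in place the remainder of the argument is a direct specialization of Proposition \ref{prop:char}.
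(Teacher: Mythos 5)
Your proposal is correct and follows essentially the same route as the paper's proof: identifying the image of $E^{(0;\nu)}$ under $Q(n,d)\hookrightarrow S(n|n,d)$ as the sum of those $E^{(\mu)}$ with $\mu^+ + \mu^- = \nu$, deducing that restriction of characters amounts to the substitution $y_i \mapsto x_i$, and then applying Proposition \ref{prop:char}. The paper states the idempotent decomposition without the intermediate remark about $E_{0;i,i}$, but otherwise the arguments coincide.
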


\begin{proof}
Given any $\mu\in \Lambda(m|n)$,
let us write %$\mu = \mu^+|\mu^-$, where 
$\mu^+ = (\mu_1,\dots, \mu_m)\in\Lambda(m)$ and
  $\mu^-= (\mu_{m+1}, \dots, \mu_{m+n})\in\Lambda(n)$.  
Then note that the embedding $Q(n,d) \hookrightarrow S(n|n,d)$ mentioned above
sends 
$$E^{(0;\nu)} \mapsto 
\sum_{\substack{\mu\hp \in\hp \Lambda(n|n,d):\hp \mu^+ + \mu^- = \nu}} E^{(\mu)}.$$
Now any $V\in S(n|n,d)\hpp \smod$ may be considered as a $Q(n,d)$-supermodule
by restriction.  In particular, for any $\nu \in \Lambda(n,d)$ we have 
$$
\dim\hp ( \text{Res}^{S(n|n,d)}_{Q(n,d)} (V)_\nu)
= \sum_{\substack{\mu\hp \in\hp \Lambda(n|n,d):\hp \mu^+ + \mu^- = \nu}} 
\dim\hp (V_\mu).
$$
Then since $x^\nu = x^{\mu^+} x^{\mu^-}$ whenever
$\mu^+ + \mu^- = \nu$, it follows that  
$
\text{ch}(\text{Res}^{S(n|n,d)}_{Q(n,d)} (V)) 
= \text{ch}(V)|_{x=y}.
$
The result now follows from Proposition \ref{prop:char} and the definitions
since
$$\widehat{S}^{\hp \II}_\lambda(V_n) = \text{Res}^{S(n|n,d)}_{Q(n,d)} 
\widehat{S}_\lambda(\k^{n|n})$$
as a $Q(n,d)$-supermodule.
\end{proof}

Let us briefly describe formal characters of polynomial superfunctors.
Let $m'\geq m$, $n'\geq n$ be nonnegative integers. Then there are
 surjective ring homomorphisms
$$\rho^{n'}_n: X(n') \to X(n),\qquad \text{resp.}\quad 
\rho^{m'|n'}_{m|n}: Z(m|n') \to Z(m|n),$$
which send the variables $x_{n+1}, \dots, x_{n'}$, 
resp.~$x_{m+1}, \dots,x_{m'}, y_{n+1}, \dots, y_{n'}$,  to zero 
and which leave other variables fixed.
These  maps restrict to give surjective maps of abelian groups: 
$$\rho^{n'}_n(d): X(n',d)\to X(n,d), \qquad  \quad
\rho^{m'|n'}_{m|n}(d): Z(m'|n',d)\to Z(m|n,d).
$$
We may define inverse limits,
$$
X(\infty,d) = \varprojlim X(n,d) \quad \text{and} \quad 
Z(\infty|\infty,d) = \varprojlim Z(m|n,d),
$$
with respect to these maps.  
We then have corresponding graded rings,
$$X(\infty) = \bigoplus_{d=0}^\infty X(\infty,d), \quad \text{and} \quad 
Z(\infty|\infty) = \bigoplus_{d=0}^\infty Z(\infty|\infty,d),$$ 
with multiplication induced from the inverse limits.
\smallskip

Notice that  $X(\infty)$ and $Z(\infty|\infty)$ may both be considered 
as subrings of the ring 
$\Z [[ x;y ]] = \Z [[ x_1, x_2, \dots; y_1, y_2, \dots ]]$ 
of formal power series in infinitely many variables.
Explicitly, we may  identify an element 
$$(f_n)_{n\geq 0}\hpp \in\hpp X(\infty,d), \qquad 
\text{resp.}\quad  (g_{m|n})_{m,n\geq 0}\hpp \in\hpp Z(\infty|\infty,d),$$
with the
unique power series $f(x)$, resp.~$g(x;y)$, such that 
$f_n(x_1, \dots, x_n) = f(x_1, \dots, x_n, 0, \dots)$
and $g_{m|n}(x_1, \dots, x_n; y_1, \dots, y_n) = g(x_1, \dots, x_m, 0, \dots; y_1, \dots, y_n, 0, \dots)$
for all $m,n\geq 0$. 
In this case, we write 
%$f= \varprojlim f_n$ and $g= \varprojlim g_{m|n}$.  
$$f= \varprojlim f_n,\quad \qquad g= \varprojlim g_{m|n}.$$  
For example, we may consider inverse limits of the Hall-Littlewood and hook Schur functions: 
$$ 
S_\lambda(x) = \varprojlim S_\lambda(x_1, \dots, x_n),\qquad \quad 
hs_\lambda(x;y) = \varprojlim hs_\lambda(x_1, \dots, x_m; y_1, \dots, y_n).
$$ 
Note that we have: 
$S_\lambda(x;y) = hs_\lambda(x;x)$, and $hs_\lambda(x;y) = \sum_{\mu\subset \lambda}s_\mu(x) s_{\lambda'/\mu'}(y)$,
where $s_\lambda(x)$ is the Schur function
$s_\lambda(x) = \varprojlim s_\lambda(x_1, \dots, x_n) \in X(\infty)$.
\smallskip

We remark that it is possible to show that
$$\rho^{m'|n'}_{m|n}(\text{ch}(T(\k^{m'|n'})) = \text{ch}(T(\k^{m|n}))), \qquad
\text{and}\quad
\rho^{n'}_{n}(\text{ch}(T(V_{n'})) = \text{ch}(T(V_n)))$$
for all $m'\geq m$, $n'\geq n$.
Hence, one may make the following definition.
\begin{definition}
The {\em formal character} of a polynomial superfunctor $T\in \Pol^\I_d$, 
resp. $T\in \Pol^\II_d$, is defined to be the element
$$
\text{ch}^\I(T) = \varprojlim \text{ch}(T(\k^{m|n})), \qquad \text{resp.}\quad 
\text{ch}^\II(T) = \varprojlim \text{ch}(T(V_n)),
$$
of $X(\infty,d)$, resp.~$Z(\infty|\infty, d)$.
\end{definition}

The Schur superfunctors then have the following formal characters:
\begin{equation}\label{eq:Schur2}
\text{ch}^\I(\widehat{S}_\lambda^\I) = hs_\lambda(x;y) \qquad \text{and}\qquad
\text{ch}^\II(\widehat{S}_\lambda^\II) = S_\lambda(x),
\end{equation}
which follows from the definitions, Proposition \ref{prop:char} and Corollary \ref{cor:char}.
%

%bib

\end{document}